\date{\today}
\theoremstyle{definition} \newtheorem{definition}{Definition}[section]
\theoremstyle{definition} \newtheorem{remark}[definition]{Remark}
\theoremstyle{plain} \newtheorem{lemma}[definition]{Lemma}
\theoremstyle{plain} \newtheorem{proposition}[definition]{Proposition}
\theoremstyle{plain} \newtheorem{theorem}[definition]{Theorem}
\theoremstyle{plain} \newtheorem*{theorem*}{Theorem}
\theoremstyle{plain} 
\theoremstyle{plain} \newtheorem*{mthm*}{Main Theorem}
\theoremstyle{plain} \newtheorem*{conjecture*}{Conjecture}
\theoremstyle{plain} \newtheorem{conjecture}[definition]{Conjecture}
\theoremstyle{plain} 
\theoremstyle{plain} \newtheorem*{problem*}{Problem}
\theoremstyle{plain} \newtheorem{corollary}[definition]{Corollary}
\theoremstyle{definition} 
\theoremstyle{plain} 
\newcommand{\R}{\mathbb{R}}
\newcommand{\1}{\mathbbm{1}}
\renewcommand{\ss}[1]{\mathscr{#1}}
\newcommand{\rr}[1]{\mathrm{#1}}
\newcommand{\rest}{\mathbin{\vrule height 1.6ex depth 0pt width
0.13ex\vrule height 0.13ex depth 0pt width 1.3ex}}
\renewcommand{\L}{\mathscr{L}}
\renewcommand{\H}{\mathscr{H}}
\newcommand{\Q}{\mathbb{Q}}
\newcommand{\N}{\mathbb{N}}
\newcommand{\Z}{\mathbb{Z}}
\newcommand{\B}{\mathscr B}
\DeclareMathOperator{\BV}{BV}
\newcommand{\eps}{\varepsilon}
\newcommand{\e}{\varepsilon}
\definecolor{orange}{rgb}{1,0.5,0}
\renewcommand{\tilde}{\widetilde}
\newcommand{\Conn}{{\rm Conn}}
\newcommand{\Lip}{{\rm Lip}}
\newcommand{\dive}{\mathop{\mathrm{div}}}
\numberwithin{equation}{section} %numera le formule in accordo con le sezioni
\title[Renormalization for nearly incompressible BV vector fields]{Renormalization for autonomous nearly incompressible BV vector fields in 2D} 
\author{Stefano Bianchini}
\address{S. Bianchini: S.I.S.S.A., via Bonomea 265, 34136 Trieste, Italy}
\email{bianchin@sissa.it}
\author{Paolo Bonicatto}
\address{P. Bonicatto: S.I.S.S.A., via Bonomea 265, 34136 Trieste, Italy}
\email{paolo.bonicatto@sissa.it}
\author{Nikolay A. Gusev}
\address{N. A. Gusev: Dybenko st., 22/3, 94, 125475 Moscow, Russia}
\email{n.a.gusev@gmail.com}
\begin{document}

\begin{abstract}
Given a bounded autonomous vector field $b \colon \R^2 \to \R^2$, we study the uniqueness of bounded solutions to the initial value problem for the related transport equation
\begin{equation*}
\partial_t u + b \cdot \nabla u= 0.
\end{equation*}
Assuming that $b$ is of class BV and it is nearly incompressible, we prove uniqueness of weak solutions to the transport equation. The starting point is the result which has been obtained in \cite{BG} (where the \emph{steady nearly incompressible} case is treated). Our proof is based on splitting the equation onto a suitable partition of the plane: this technique was introduced in \cite{ABC1}, using the results on the structure of level sets of Lipschitz maps obtained in \cite{ABC2}. Furthermore, in order to construct the partition, we use Ambrosio's superposition principle \cite{AmbrInv}. \\

\noindent \textsc{Keywords}: transport equation, continuity equation, renormalization, disintegration of measures, Lipschitz functions, Superposition Principle. \\

\noindent \textsc{MSC (2010): 35F10, 35L03, 28A50, 35D30.}
\end{abstract}

\thanks{S. Bianchini and P. Bonicatto have been partially supported by the PRIN project ``Nonlinear Hyperbolic Partial Differential Equations, Dispersive and Transport Equations: theoretical and applicative aspects''. N.A. Gusev was partially supported by the Russian Foundation for Basic Research, project no. 13-01-12460. The authors wish to thank one of the referees for an useful observation which leads to Remark \ref{rem:referee}}

\maketitle

\begin{center}
Preprint SISSA 67/2014/MATE
\end{center}

\section{Introduction and notation}
In this paper we consider the \emph{continuity equation}
\begin{equation}\label{eq:continuity-intro}
\partial_{t} u + \dive (ub) = 0
\end{equation}
and the \emph{transport equation}
\begin{equation}\label{eq:transport-intro}
\partial_t u + b \cdot \nabla u = 0,
\end{equation}
for a scalar field $u\colon I \times \R^{2} \to \R$ (where $I=(0,T)$, $T>0$) with a vector field $b \colon I\times\R^{2} \to \R^{2}$.
We study the initial value problems for these equations with the same initial condition
\begin{equation}\label{eq:initial-condition}
u(0,\cdot) = \overline{u}(\cdot),
\end{equation}
where $\bar u \colon \R^2 \to \R$ is a given scalar field.

Our aim is to investigate uniqueness of weak solutions to \eqref{eq:continuity-intro}, \eqref{eq:initial-condition} (and to \eqref{eq:transport-intro}, \eqref{eq:initial-condition}) under weak regularity assumptions on the vector field $b$.

Even if we are interested only to the two dimensional case, we present here the main definitions in $\R^d$, with $d \in \N$. When $b\in L^\infty(I\times \R^d)$ then \eqref{eq:continuity-intro} is understood in the standard sense of distributions:
$u\in L^\infty(I\times \R^d)$ is called a \emph{weak solution} of the continuity equation if  \eqref{eq:continuity-intro} holds in $\ss D'(I\times \R^d)$.
One can prove (see e.g. \cite{camillonote}) that, if $u$ is a weak solution of \eqref{eq:continuity-intro}, then there exists a map $\widetilde{u} \in L^{\infty}([0,T] \times \R^{d})$ such that $u(t, \cdot) = \widetilde{u}(t, \cdot)$ for a.e. $t \in I$ and $t \mapsto \widetilde{u}(t, \cdot)$ is weakly$^{\star}$ continuous from $[0,T]$ into $L^{\infty}(\R^{d})$. This allows us to prescribe an initial condition \eqref{eq:initial-condition} for a weak solution $u$ of the continuity equation in the following sense: we say that $u(0,\cdot)=\bar u(\cdot)$ holds if $\widetilde{u}(0, \cdot) = \bar u(\cdot)$.

Definition of weak solutions of the transport equation \eqref{eq:transport-intro} is slightly more delicate. If the divergence of $b$ is absolutely continuous with
respect to the Lebesgue measure then \eqref{eq:transport-intro} can be written as
\[
\partial_t u + \dive(ub) - u \dive b=0,
\]
and the latter equation
can be understood in the sense of distributions (see e.g. \cite{dipernalions} for the details). 
We are interested in the case when $\dive b$ is not absolutely continuous. In this case the notion of weak solution of \eqref{eq:transport-intro}
can be defined for the class of \emph{nearly incompressible vector fields}.

\begin{definition} \label{def:ni} A bounded, locally integrable vector field $b \colon I \times \R^d \to \R^{d}$ is called \emph{nearly incompressible} if there exists a function $\rho \colon I \times \Omega \to \R$ (called \emph{density} of $b$) and a constant $C>0$ such that $C^{-1} \le \rho(t,x) \le C$ for $\L^1 \times \L^d$-a.e. $(t,x) \in I \times \Omega$ and 
\begin{equation}\label{eq:rho_cont}
\partial_{t} \rho + \dive(\rho b) = 0 \qquad \text{ in }  \mathscr D^{\prime}(I \times \Omega).
\end{equation}
\end{definition}

Nearly incompressible vector fields were introduced in connection with the hyperbolic conservation laws,
namely, the Keyfitz-Kranzer system \cite{keyfitz}. See e.g. \cite{camillonote} for the details.
Using mollification one can prove that if $\dive b \in L^\infty(I \times \R^d)$ then $b$ is nearly incompressible.
The converse implication does not hold, so near incompressibility can be considered as a weaker version of the assumption $\dive b \in L^\infty(I \times \R^d)$.

\begin{definition}
\label{def:def-sol-near-inc}
Let $b$ be a nearly incompressible vector field with density $\rho$.
We say that a function $u \in L^{\infty}(I \times \R^{2})$ is a \emph{($\rho$--)weak solution} of \eqref{eq:transport-intro} if
\begin{equation*} %\label{eq:def-sol-near-inc}
(\rho u)_{t} + \dive (\rho u b) = 0 \quad \text{ in } \ss D^{\prime}(I \times \R^{2}).
\end{equation*}
\end{definition}

Thanks to Definition~\ref{def:def-sol-near-inc} one can prescribe the initial condition for a $\rho$--weak solution of the transport equation
similarly to the case of the continuity equation, which we mentioned above (see \cite{camillonote} for the details).

\emph{Existence} of weak solutions to initial value problem for transport equation with a nearly incompressible vector field can be proved by a standard regularization argument \cite{camillonote}. The problem of \emph{uniqueness} of weak solutions is much more delicate.
The theory of uniqueness in the non-smooth framework has started with the seminal paper of R.J. DiPerna and P.-L. Lions \cite{dipernalions} where
uniqueness was obtained as a corollary of
so-called \emph{renormalization property} for the vector fields with Sobolev regularity.
Thanks to Definition~\ref{def:def-sol-near-inc} the renormalization property can be defined also for nearly incompressible vector fields:

\begin{definition}\label{def:ren-n-i} We say that a nearly incompressible vector field $b$ with density $\rho$ has the \emph{renormalization property} if for every $\rho$--weak solution $u \in L^{\infty}(I \times \R^{d})$ of \eqref{eq:transport-intro}
and any function $\beta \in C^1(\R)$ the function $\beta(u)$ also is a $\rho$-weak solution of \eqref{eq:transport-intro}, i.e. it satisfies
\begin{equation*}
\partial_{t}\left(\rho\beta(u) \right) + \dive \left(\rho \beta(u) b\right) = 0 \quad \text{ in } \ss D^{\prime}(I \times \R^{d}).
\end{equation*}
\end{definition}

Nearly incompressible vector fields are related to a conjecture, made by A. Bressan in \cite{bressan1}:
\begin{conjecture}[Bressan's compactness conjecture]
Let $b_n\colon \mathbb R \times \R^d \to \mathbb R^d$, $n\in \mathbb N$, be a sequence of smooth vector fields.
Denote by $\Phi_n$ the solutions of the ODEs
$$
\begin{aligned}
\frac{d}{dt} \Phi_n(t,x) &= b_n(t, \Phi_n(t,x)), \\
\Phi_n(0,x) &= x.
\end{aligned}
$$
Assume that $\|b_n\|_\infty + \|\nabla_{t,x} b_n\|_{L^1}$ is uniformly bounded and there exists a constant $C>0$
such that
$$
C^{-1} \le \det(\nabla_x \Phi_n(t,x)) \le C
$$
for all $(t,x) \in \mathbb R \times \mathbb R^d$ and all $n \in \mathbb N$.
Then the sequence $\Phi_n$ is strongly precompact in $L^1_{\mathrm{loc}}$.
\end{conjecture}
It has been proved in \cite{ambrosiobouchutdelellis} that Bressan's conjecture would follow from the next one: 
\begin{conjecture}[Renormalization conjecture]\label{Bressan}
Any bounded, nearly incompressible vector field ${b \in \BV_{\rm loc}(\R \times \R^d)}$
has the renormalization property (in the sense of Definition \ref{def:ren-n-i}). 
\end{conjecture}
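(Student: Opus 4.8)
The plan is to attack Conjecture~\ref{Bressan} by combining (i) a space-time embedding that reduces the general time-dependent statement in $\R^d$ to the autonomous statement in $\R^{d+1}$, (ii) Ambrosio's superposition principle, which rewrites the $\rho$-weak formulation in a Lagrangian form and reduces renormalization to a ``no-splitting of characteristics'' property, and (iii) an induction on the dimension in which the equation is split onto a measurable partition of the ambient space into lower-dimensional invariant leaves, each carrying an induced nearly incompressible BV dynamics. The base of the induction --- $n=2$, autonomous --- is precisely the content of the present paper, where the Hamiltonian structure of planar divergence-free fields, the level-set structure theory of \cite{ABC2}, and the splitting technique of \cite{ABC1} combine to yield renormalization.

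Step (i). Given bounded nearly incompressible $b\in\BV_{\rm loc}(\R\times\R^d)$ with density $\rho$ (so $e^{-\|\ln\rho\|_\infty}\le\rho\le e^{\|\ln\rho\|_\infty}$), the space-time field $B:=(1,b)$ on $\R^{1+d}$ is bounded, $\BV_{\rm loc}$, and satisfies $\dive_{(t,x)}(\rho B)=\partial_t\rho+\dive_x(\rho b)=0$; hence $B$ is autonomous and nearly incompressible with the same density. A function $u(t,x)$ is a $\rho$-weak solution of $\partial_t u+b\cdot\nabla u=0$ if and only if $U:=u$ is a $\rho$-weak solution of $B\cdot\nabla U=0$, and $\beta(u)$ renormalizes for $b$ iff $\beta(U)$ renormalizes for $B$. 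So it suffices to prove: every bounded autonomous nearly incompressible $b\in\BV_{\rm loc}(\R^n)$ has the renormalization property, for every $n\ge 1$.

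Step (ii). Write $\mu:=\rho\,\mathcal L^n$, a measure comparable to $\mathcal L^n$; then $\rho b\,\mathcal L^n$ is a bounded divergence-free measure, so by the superposition principle \cite{ambrosiobv} it is generated by a family of integral curves of $b$, i.e.\ there is a measure $\eta$ on curves such that $\mu$ and the associated flux are recovered by integrating Dirac masses along curves in $\operatorname{supp}\eta$. In this language renormalization is equivalent to the statement that $\eta$ is concentrated on a (measurable) flow --- no two mass-carrying characteristics bifurcate on a set of positive $\mu$-measure --- equivalently to the chain rule $\dive(\beta(u)\rho b)=\beta'(u)\dive(u\rho b)$; the only possible obstruction is a nonlinear interaction of characteristics located where $\dive b$ concentrates on lower-dimensional subsets of $\R^n$.

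Step (iii), the heart. For $n=2$, locally $\rho b=\nabla^\perp H$ for a Lipschitz $H$ constant along characteristics; disintegrating $\mu$ along the level sets of $H$ via the coarea formula and the structure theorem of \cite{ABC2}, each leaf is (up to $\mu$-null sets) a countable union of Lipschitz curves on which the dynamics is a monotone one-dimensional transport, so renormalization on each leaf is elementary --- the present paper's task being to construct this partition for merely nearly incompressible (rather than strictly incompressible) $b$ using \cite{ambrosiobv}. For $n\ge 3$ no global Hamiltonian exists; the proposal is to produce $n-2$ ``independent first integrals'' $H_1,\dots,H_{n-2}$ in a Lipschitz sense whose joint level sets foliate $\R^n$ into two-dimensional leaves, to check that the induced leafwise field is still BV and nearly incompressible, to apply the $n=2$ result leafwise, and to reassemble via a generalized coarea/disintegration formula. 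Constructing such first integrals and controlling the induced leafwise structure is where I expect the main difficulty to lie: the structure theory of level sets of Lipschitz maps into $\R^k$ with $k\ge 2$ is far less developed than the scalar case of \cite{ABC2}, and the concentration set of $\dive b$ can be an $(n-1)$-rectifiable set along which characteristics genuinely split, so the crux is to show that such splitting is $\mu$-negligible. A fallback avoiding first integrals is to work directly with $\eta$ and prove a ``no-bifurcation'' dichotomy from the $\BV$ bound by a blow-up argument at points of the Federer--Vol'pert decomposition of $Db$; but here too the jump part of $Db$, carried by an $(n-1)$-rectifiable set, is exactly where the full strength of Conjecture~\ref{Bressan} is concentrated.
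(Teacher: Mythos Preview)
The statement you are attempting to prove is a \emph{conjecture}, and the paper does not claim to prove it. The paper's Main Theorem (Theorem~\ref{T_main}) establishes only the special case of an \emph{autonomous} nearly incompressible $\BV$ field on $\T^2$; the general time-dependent, arbitrary-dimension statement of Conjecture~\ref{Bressan} is left open. So there is no ``paper's own proof'' to compare against.

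Your proposal is a research outline, not a proof, and it has a genuine gap precisely where you flag it. Step~(i) is standard and Step~(ii) is a correct reformulation, but Step~(iii) for $n\ge 3$ rests on producing $n-2$ independent Lipschitz first integrals whose joint level sets foliate $\R^n$ into two-dimensional leaves on which the induced dynamics is again $\BV$ and nearly incompressible. You do not construct these integrals, and there is no reason to expect them to exist: already for smooth vector fields, complete integrability is an exceptional property, and for merely $\BV$ fields the obstruction is worse because the jump set of $Db$ is an $(n-1)$-rectifiable set across which characteristics can in principle bifurcate. The level-set structure theory you would need (a Lipschitz analogue of \cite{ABC2} for maps into $\R^{n-2}$) is not available, and you say so yourself. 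The fallback ``no-bifurcation via blow-up along the Federer--Vol'pert decomposition'' is likewise a statement of the problem rather than a solution: controlling the jump part of $Db$ is exactly the content of the conjecture that Ambrosio's theorem \cite{ambrosiobv} does not cover.

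In short: your Steps~(i)--(ii) correctly reduce the conjecture to an autonomous statement and a Lagrangian no-splitting criterion, and your $n=2$ base case is what the present paper supplies; but the inductive step is missing, and the missing ingredient is the entire substance of Conjecture~\ref{Bressan} in dimension $\ge 3$.
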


The renormalization property can also be generalized for the systems of transport equations.
Moreover, if $\eta$ is another density of the nearly incompressible vector field $b$ and
$b$ has the renormalization property with the density $\rho$, then any $\rho$--weak solution of \eqref{eq:transport-intro}
is also an $\eta$--weak solution and vice versa. In other words, the property of being a $\rho$--weak solution does not depend on the
choice of the density $\rho$ provided that renormalization holds. We refer to \cite{camillonote} for the details.

If the functions $\rho$, $u$ and $b$ were smooth, renormalization property would be an easy corollary of the chain rule.
Out of the smooth setting, the validity of this property is a key step to get uniqueness of weak solutions.
Indeed, if we for simplicity consider $\mathbb T^d$ instead of $\mathbb R^d$, then integrating the equation above over the torus we get
\begin{equation*}
\partial_t \int_{\mathbb T^d} \rho \beta(u) \, dx = 0.
\end{equation*}
So if $\bar u = 0$ then for $\beta(y)=y^2$ we get
\begin{equation*}
\int_{\mathbb T^d} \rho(t,x) u^2(t,x) \, dx = 0
\end{equation*}
for a.e. $t$ which implies $u(t,\cdot)=0$ for a.e. $t$.

The problem of uniqueness of solutions is thus shifted to prove the renormalization property for $b$: in \cite{dipernalions} the authors proved that renormalization property holds under Sobolev regularity assumptions; some years later, L. Ambrosio \cite{AmbrInv} improved this result, showing that renormalization holds for vector fields which are of class $\BV$ (locally in space) and have absolutely continuous divergence. \\
Another approach giving explicit compactness estimates has been introduced in \cite{MR2369485}, and further developed in \cite{bouchut_crippa,Jabin}: see also the references therein.

In the two dimensional autonomous case the problem of uniqueness is addressed in the papers \cite{ABC1}, \cite{ABC2} and \cite{BG}. Indeed, in two dimensions and for divergence-free autonomous vector fields, renormalization theorems are available even under mild assumptions, because of the underlying Hamiltonian structure. In \cite{ABC1}, the authors characterize the autonomous, divergence-free vector fields $b$ on the plane such that the Cauchy problem for the continuity equation \eqref{eq:continuity-intro} admits a unique bounded weak solution for every bounded initial datum \eqref{eq:initial-condition}. The characterization they present relies on the so called \emph{Weak Sard Property}, which is a (weaker) measure theoretic version of Sard's Lemma. Since the problem admits a Hamiltonian potential, uniqueness is proved following a strategy based on splitting the equation on the level sets of this function, reducing thus to a one-dimensional problem. This approach requires a preliminary study on the structure of level sets of Lipschitz maps defined on $\R^{2}$, which is carried out in the paper \cite{ABC2}. \\

In \cite{BG} the \emph{steady nearly incompressible} autonomous vector fields on $\Omega = \R^2$ were considered. Namely, an autonomous vector field $b\colon \mathbb R^2 \to \mathbb R^2$ is called steady nearly incompressible if it admits a steady density $\tilde\rho$, i.e. there exists a function $\tilde \rho$, uniformly bounded from below and above by some strictly positive constants, such that $\dive(\tilde \rho b) = 0$. 
It was proved in \cite{BG} that any steady nearly incompressible BV vector field on $\mathbb R^2$ has the renormalization property.
In the present paper we extend this result to the non-steady case.
Any steady nearly incompressible vector field is nearly incompressible, but the inverse implication does not hold in general.
For instance, consider a vector field $b \colon (0, 2) \to \mathbb R$ given by $b(x) = |x-1| - 1$. If it was steady nearly incompressible,
the function $\tilde \rho \cdot b$ would be constant on $(0,2)$ and thus $\tilde \rho$ could not be uniformly bounded from above by a positive constant.
On the other hand this vector field $b$ is nearly incompressible: the solution to the continuity equation $\partial_t \rho + \partial_x (\rho b) =0$
with the initial condition $\rho|_{t=0}=1$ satisfies $e^{-t} \le \rho(t,x) \le e^t$, as one can easily demostrate using the classical method of characteristics, since $b$ is Lipschitz. This simple example can be generalized to higher dimensions.
%  One can show (see \cite{BG}) that if the continuity equation $\partial_t \rho + \dive (\rho b) =0$ has a solution $\rho=\rho(t,x)$ bounded from below and above by positive constants
% \emph{uniformly} for all $(t,x)$ then $b$ is steady nearly incompressible.

% Steady nearly incomressible vector fields substitute a proper subset of nearly incompressible ones but the results obtained in \cite{BG} are the starting points of this work. Furthermore, we mention that the problem of renormalization is also related to the problem of locality of divergence operator and to the chain rule problem (see again \cite{BG}).

% \subsubsection*{Nearly incompressible vector fields and Bressan's conjecture}\label{ss:ni}

% We now turn our attention to the following class of vector fields. % which is particularly relevant in view of the applications (for instance, in connection with the Keyfitz and Kranzer system \cite{keyfitz}).

The main result of this paper is a partial answer to the Conjecture~\ref{Bressan}:

\begin{mthm*} Every bounded, autonomous, compactly supported, nearly incompressible $\BV$ vector field on $\R^2$ has the renormalization property.\end{mthm*}

In particular, we obtain the following 
\begin{corollary} Suppose that $b\colon \R^2 \to \mathbb R^{2}$ is a compactly supported, nearly incompressible $\BV$ vector field (with density $\rho$).
Then 
\begin{enumerate}
\item $\forall u_0 \in L^{\infty}(\R^2)$ there exists a unique ($\rho$-)weak solution $u\in L^{\infty}(I \times \R^2)$ to the transport equation \eqref{eq:transport-intro} with the initial condition $u|_{t=0} = u_0$.
\item $\forall u_0 \in L^{\infty}(\R^2)$ there exists a unique weak solution $u\in L^{\infty}(I \times \R^2)$ to the continuity equation \eqref{eq:continuity-intro} with the initial condition $u|_{t=0} = u_0$.
\end{enumerate}
\end{corollary}

\subsection{Structure of the paper}

The paper is organised as follows. 

In Section \ref{s:new-one} we present Ambrosio's Superposition Principle. By this Principle, the measure $\rho(t, \cdot)\L^2$ (where $\rho$ is a nonnegative bounded solution of the continuity equation \eqref{eq:rho_cont}) can be represented as an 
\emph{image} of some probability measure $\eta$ on the space of curves $C([0,T];\R^2)$ (concentrated on the solutions of the ODE $\gamma^\prime = b(\gamma)$) under the evaluation map $e_t \colon \gamma \mapsto \gamma(t)$: 
\begin{equation*}
\rho(t,\cdot) \L^2 = (e_{t})_{\#} \eta.
\end{equation*}

Using this Theorem, we construct a suitable partition of the plane and we reduce our problem \emph{locally} to the case when the density $\rho$ is steady, which has been studied in \cite{BG}. In this case, since $\dive (\rho b)=0$, there exists
a Lipschitz \emph{Hamiltonian} $H\colon \R^2 \to \R$ such that
\begin{equation*}
\rho b = \nabla^\perp H,
\end{equation*}
where $\nabla^\perp = (-\partial_2, \partial_1)$.

In the general nearly incompressible case it is not possible to construct the
Hamiltonian $H$ directly as in the case of steady density. However, we reduce the problem to the steady case using the following argument. Suppose that a nonnegative bounded function $\varrho$ solves the continuity equation
\begin{equation*}
\varrho_t + \dive(\varrho b) = 0,
\end{equation*}
$t\mapsto \varrho(t,\cdot)$ is weak* continuous and
for some open set $\Omega$ and $t_{1,2}\in [0,T]$ we have
$\varrho(t_1, \cdot) = \varrho(t_2,\cdot) = 0$ a.e. on $\Omega$.
Integrating the continuity equation with respect to time on $[t_1,t_2]$
it is easy to see that
\begin{equation*}
r(x):=\int_{t_1}^{t_2} \varrho(t,x) \, dt
\end{equation*}
solves
\[
\dive (r b) = 0
\]
in $\ss{D}'(\Omega)$. Therefore in $\Omega$ one can construct a \emph{local Hamiltonian}
$H_\Omega$ such that
\begin{equation*}
r b = \nabla^\perp H_\Omega
\end{equation*}
in $\Omega$.

Once we have constructed the local Hamiltonians, we show how we can split an equation of the form 
\begin{equation}\label{divub=mu}
\dive (ub) = \mu, \qquad u\colon \R^2 \to \R
\end{equation}
where $\mu$ is a measure on $\R^2$, into an equivalent family of equations along the level sets of $H$. This is done in Section \ref{S_recent_resu}, where we also recall the main results of \cite{ABC2,ABC1,BG} and adapt them to our setting. In Section~\ref{s:new-four} we establish the so-called Weak Sard Property for the Hamiltonian $H$. 

Then we turn to study in detail the relationship between level sets of the local Hamiltonian $H$ and the trajectories of the vector field $b$: in Section \ref{s:new-five}, we present some lemmas which show that (up to a $\eta$ negligible set) all non constant integral curves of $b$ are contained in ``good'' level sets of $H$. 

In Section \ref{s:new-six} we prove that the divergence operator is \emph{local}, in the sense that the measure $\mu$ in \eqref{divub=mu} vanishes on the set $M:=\{b= 0\}$ (Proposition \ref{prop:locality}). We stress that this result is true for every space dimension and it is crucial to obtain a better description of the link between the level sets and the trajectories. This is achieved in Section \ref{s:new-seven}, where in particular, we prove that ``good'' level sets of $H$ cover almost all the set $M^{c}=\{b \ne 0\}$. 

Finally, in Section \ref{s:new-eight} we first show how the time-dependent problem
\begin{equation}\label{eq:time-dependent-problem-intro}
\begin{cases}
u_{t} + b \cdot \nabla u = 0, \\
u(0, \cdot) = u_{0}(\cdot),
\end{cases} \qquad \text{in } \ss D^{\prime}((0,T) \times \R^2).
\end{equation}
can be reduced to a family of one-dimensional problems on level sets of the Hamiltonians, which can be solved explicitly. This allows to construct a $\eta$-negligible set $R$ of trajectories with the following property, which is reminiscent of the standard Method of Characteristics (within the smooth setting): if $u$ is a solution of \ref{eq:time-dependent-problem-intro}, then for all $\gamma \notin R$ the function $t \mapsto u(t,\gamma(t))$ is constant. This crucial result (Lemma \ref{l-characteristics-1}) combined with an elementary observation (Lemma \ref{l-characteristics-2}) concludes in Section \ref{s:new-nine} the proof of the {\bf Main Theorem} (Theorem \ref{T_main}).

\subsection{Notation}\label{ss-notation}
Throughout the paper, we use the following notation:
\begin{itemize}
\item $(X,d)$ is a metric space; 
\item $\1_{E}$ is the characteristic function of the set $E \subset X$, defined as $\1_{E}(x)=1$ if $x \in E$ and $\1_{E}(x)=0$ otherwise; 
% \item $\R^2 = \nicefrac{\R^{2}}{\Z^{2}}$ is the two dimensional torus; 
\item $\Omega$ denotes in general a simply connected open set in $\R^{2}$;
% \item ${\rm diam}(E)$ is the diameter of the set $E$;
\item ${\rm dist}(x, E)$ is the distance of $x$ from the set $E$, defined as the infimum of $d(x,y)$ as $y$ varies in $E$; 
% \item ${\rm dist}(E_{1},E_{2})$ is the distance between the sets $E_{1}$ and $E_{2}$, defined as the 
% infimum of the distances $d(x_{1},x_{2})$, for all $x_{1} \in E_{1}$, $x_{2} \in E_{2}$;
% \item $\R^{d}$ is the usual $d$-dimensional Euclidean space, with inner product $\cdot$ and induced norm $\vert \cdot \vert$;
\item $B(x,r)$ or, equivalently, $B_{r}(x)$ is the open ball in $\R^{d}$ with radius $r$ and centre $x$; 
$B(r)$ is the open ball in $\R^{d}$ with radius $r$ and centre $0$;
% \item $\mathbb S^{d}:=\left\{x \in \R^{d+1}: \, \vert x \vert = 1\right\}$ is the unit sphere in $\R^{d+1}$; 
\item $\fint_{E} f \, d\mu$ denotes the \emph{average} of the function $f$ over the set $E$ with respect to the positive measure $\mu$, that is 
\begin{equation*}
\fint_{E} f \, d\mu := \frac{1}{\mu(E)}\int_{E} f \, d\mu,
\end{equation*}
\item $\mu \rest A$ denotes the restriction of a measure $\mu$ on a set $A$.
\item $\vert \mu \vert$ is the total variation of a measure $\mu$;
\item $\mu^\mathrm{sing}$ the singular component of $\mu$ with respect to the Lebesgue measure;
\item $\L^{d}$ is the Lebesgue measure on $\R^{d}$ and $\H^{k}$ is the $k$-dimensional Hausdorff measure;
\item $\text{Lip}(X)$ is the space of real-valued Lipschitz functions; 
$\text{Lip}_c(X)$ is the space of real-valued compactly supported Lipschitz functions;
\item $C_{c}^{\infty}(\Omega)$ is the space of smooth compactly supported functions, also called \emph{test functions};
\item $\BV(\Omega)$ set of functions with bounded variation;
\item $\mathscr D^{\prime}(\Omega)$ is the space of distributions on the open set $\Omega$;

% \item if $I \subset \R$ is an interval, $(V, \Vert \cdot \Vert)$ is a normed space of $\R^{p}$-valued functions defined on $\R^{d}$ (e.g. $L^{p}$ or a Sobolev space) and $f \colon I \times \R^{m} \to \R^{p}$ is a Borel function, we write that $f\in L^{1}(I; V)$ if the integral $\int_{I} \Vert f(t, \cdot) \Vert \, dt < + \infty$.

\item $\Gamma:= C([0,T]; \R^2)$ will denote the set of continuous curves in $\R^2$; 
\item $\dot{\Gamma}:= \left\{\gamma \in \Gamma: \gamma(t) = \gamma(0), \, \forall t \in [0,T]\right\}$ denotes the set of constant curves (whose graphs are fixed points); 
\item $\tilde{\Gamma}:= \Gamma\setminus \dot{\Gamma}$ denotes the set of non-constant curves (whose graphs have positive length);
\item $e_{t} \colon \Gamma \to \R^2$ is the \emph{evaluation map} at time $t$, i.e. $e_t(\gamma)= \gamma(t)$.
\end{itemize}
Moreover, if $A \subset \R^2$ is a measurable set, 
\begin{itemize}
\item $\Gamma_{A} := \left\{\gamma \in \Gamma: \L^{1}(\{ t \in [0,T]: \gamma(t) \in A\})>0\right\}$ denotes the set of curves which stay in $A$ for a positive amount of time; 
\item $\tilde{\Gamma}_{A} := \tilde{\Gamma} \cap \Gamma_{A}$ denotes the set of non-constant curves which stay in $A$ for a positive amount of time; 
\item $\dot{\Gamma}_{A} := \dot{\Gamma} \cap \Gamma_{A}$ denotes the set of constant curves which stay in $A$ for a positive amount of time.
\item for every $s \in [0,T]$, we denote by 
\begin{align*}
& \Gamma^{s}_{A} := \left\{ \gamma \in \Gamma: \gamma(s) \in A \right\}, \\
& \tilde{\Gamma}^{s}_{A} :=\left\{ \gamma \in \tilde{\Gamma}: \gamma(s) \in A \right\}, \\
&\dot{\Gamma}^{s}_{A} :=\left\{ \gamma \in \dot{\Gamma}: \, \gamma(s) \in A \right\}
\end{align*}
accordingly the sets of all curves, non-constant curves and constant curves, which at time $s$ belong to $A$;
\item 
$\mathsf T_{A} := \left\{ \gamma \in \Gamma_A: \; \gamma(0) \notin A, \gamma(T) \notin A\right\}$ denotes
the set of curves which stay in $A$ for a positive amount of time and have the endpoints outside $A$.
\end{itemize}
If $A \subseteq \R^2$, we denote by 
\begin{equation*}
\begin{split}
& \Conn(A) := \Big\{ C \subset A: C \text{ is a connected component of } A\Big\}, \\ 
& \Conn^{\star}(A):=\Big\{C \in \Conn(A): \H^1(C)>0\Big\},
\end{split}
\end{equation*}
and 
\begin{equation*}
A^{\star} := \bigcup_{C \in \Conn^{\star}(A)} C.
\end{equation*}

When the measure is not specified, it is assumed to be the Lebesgue measure, and we often write
\begin{equation*}
 \int f(x)\,dx
\end{equation*}
for the integral of $f$ with respect to $\L^{d}$.

\subsection{Disintegration of a measure} 

Let $\mu$ be a Radon measure on a metric space $X$. Let $Y$ be a metric space and let $f\colon X\to Y$ be a Borel function.
We denote by $f_\# \mu$ the \emph{image measure} of $\mu$ under the map $f$. In particular, for any $\varphi\in C_c(Y)$
we have
\[
\int_X \varphi(f(x)) \, d\mu(x) = \int_Y \varphi(y) \, d (f_\# \mu)(y).
\]

Let $\nu$ be a Radon measure on $Y$ such that $f_\# |\mu| \ll \nu$.
According to the Disintegration Theorem (Theorem 2.28 of \cite{AFP} or for the most general statement Section 452 of \cite{Fre:measuretheory4}) there exists a unique measurable family of Radon
measures $\{\mu_y\}_{y\in Y}$ such that for $\nu$-a.e. $y\in Y$ the measure $\mu_y$ is concentrated on the level set $f^{-1}(y)$ and
\begin{equation*}
\mu = \int_{y} \mu_y \, d \nu (y),
\end{equation*}
that is, for any $\varphi \in C_c(X)$
\begin{equation*}
\int_X \varphi(x)\, d\mu(x) = \int_Y \left( \int_X \varphi(x) \, d\mu_y(x) \right) \, d \nu (y).
\end{equation*}
The family $\{\mu_y\}_{y\in Y}$ is called the \emph{disintegration of $\mu$ with respect to $f$ (and $\nu$)}.

\subsection{Coarea formula}

Suppose that $H \colon \R^2 \to \R$ is a Lipschitz function. Coarea formula (see e.g. \cite{AFP} for the general statement) provides further information about the structure of the disintegration of $|\nabla H| \L^2$ with respect to $H$:
\begin{lemma}\label{lemma-coarea}
Let $\{\varpi_h\}_{h\in \R}$ denote the disintegration of the measure $|\nabla H| \L^2$ with respect to $H$ and let $E_h:=H^{-1}(h)$. Then for a.e. $h\in \R$ we have $\H^1(E_h)<\infty$ and $\varpi_h = \H^1 \rest E_h$. In other words, the disintegration of $|\nabla H| \L^2$ with respect to $H$ is given by
\begin{equation*}
|\nabla H| \L^2 = \int_\R \H^1 \rest E_h \, dh.
\end{equation*}
\end{lemma}

\section{Setting of the problem}\label{s:new-one}

\subsection{Ambrosio's Superposition Principle} In \cite{AmbrInv}, L. Ambrosio proved the \emph{Superposition Principle}. Since we will use it later on in this section, we present here the statement. Let us consider the continuity equation in the form 
\begin{equation}\label{eq:continuity-superposition}
\begin{cases}
\partial_{t} \mu_{t} + \dive(b \mu_{t}) = 0, \\
\mu_{0} = \overline{\mu},
\end{cases}
\end{equation}
where $[0,T] \ni t \mapsto \mu_{t}$ is a measure valued function and $b \colon [0,T] \times \R^{d} \to \R^{d}$ is a bounded, Borel vector field. A solution to \eqref{eq:continuity-superposition} has to be understood in distributional sense. 

We have the following 

\begin{theorem}[Superposition Principle]\label{thm:superposition} Let $b \colon [0,T] \times \R^{d} \to \R^{d}$ be a bounded, Borel vector field and let $[0,T] \ni t \mapsto \mu_{t}$ be a positive, locally finite, measure-valued solution of the continuity equation \eqref{eq:continuity-superposition}. Then there exists a family of probability measures $\{\eta_{x}\}_{x \in \R^{d}}$ on $\Gamma$ such that 
\begin{equation*}
\mu_{t} = \int {e_{t}}_{\#} \eta_{x} d\bar{\mu}(x),
\end{equation*}
for any $t \in (0,T)$ and $\left( {e_{0}}\right)_{\#}\eta_{x}=\delta_{x}$. Moreover, $\eta_{x}$ is concentrated on absolutely continuous integral solutions of the ODE starting from $x$, for $\overline{\mu}$-a.e. $x \in \R^{d}$.
\end{theorem}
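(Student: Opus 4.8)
The plan is to first produce a single probability measure $\eta$ on the path space $\Gamma$ which is concentrated on solutions of the ODE $\dot\gamma=b(t,\gamma)$ and has the prescribed marginals $(e_t)_\#\eta=\mu_t$, and then to extract the family $\{\eta_x\}$ from $\eta$ by disintegration. A routine localization (the continuity equation propagates with finite speed, since $b$ is bounded) together with a normalization reduces matters to the case $\bar\mu\in\mathcal{P}(\R^d)$, with $t\mapsto\mu_t$ taken to be its weakly$^{\star}$ continuous representative. Granting such an $\eta$, disintegrating it with respect to the evaluation $e_0$ and using $(e_0)_\#\eta=\mu_0=\bar\mu$ yields measures $\eta_x$ with $(e_0)_\#\eta_x=\delta_x$, concentrated on integral curves for $\bar\mu$-a.e.\ $x$, and $\mu_t=(e_t)_\#\eta=\int(e_t)_\#\eta_x\,d\bar\mu(x)$, which is the assertion.

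To construct $\eta$, following \cite{ambrosiobv}, I would regularize by spatial convolution with a mollifier $\rho_\eps$. Set $\mu_t^\eps:=\mu_t*\rho_\eps$ and $b^\eps:=((b\mu_t)*\rho_\eps)/\mu_t^\eps$ (defined to vanish where $\mu_t^\eps=0$). Then $\|b^\eps\|_\infty\le\|b\|_\infty=:L$ by Jensen's inequality, $b^\eps$ is smooth in $x$ and Borel in $t$, and, since $b^\eps\mu_t^\eps=(b\mu_t)*\rho_\eps$, the density $\mu_t^\eps$ solves the continuity equation with velocity $b^\eps$ exactly. Hence the Carath\'eodory flow $X^\eps$ of $b^\eps$ exists, is $L$-Lipschitz in time, satisfies $\mu_t^\eps=(X^\eps(t,\cdot))_\#\mu_0^\eps$, and the push-forward $\eta^\eps:=(x\mapsto X^\eps(\cdot,x))_\#\mu_0^\eps\in\mathcal{P}(\Gamma)$ obeys $(e_t)_\#\eta^\eps=\mu_t^\eps$ while being concentrated on solutions of $\dot\gamma=b^\eps(t,\gamma)$. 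The family $\{\eta^\eps\}$ is tight: its time-zero marginals $\mu_0^\eps$ converge to $\bar\mu$, and all curves in its support are $L$-Lipschitz, so they range in a compact subset of $\Gamma$ by the Ascoli--Arzel\`a theorem. Let $\eta$ be a weak$^{\star}$ limit point along $\eps\to0$; by continuity of $e_t$ and $\mu_t^\eps\rightharpoonup\mu_t$ we get $(e_t)_\#\eta=\mu_t$ for all $t$, and $\eta$ is concentrated on $L$-Lipschitz curves.

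The remaining point is that $\eta$ is concentrated on integral curves of $b$. Fix a Borel representative $c$ of $b$, put $c^\delta:=c*\rho_\delta$, and for a bounded Borel field $w$ define $F_w(\gamma):=\int_0^T\big(\big|\gamma(t)-\gamma(0)-\int_0^t w(s,\gamma(s))\,ds\big|\wedge1\big)\,dt$; a Lipschitz curve $\gamma$ solves $\dot\gamma=b$ if and only if $F_c(\gamma)=0$. For continuous bounded $w$ the map $\gamma\mapsto F_w(\gamma)$ is bounded and continuous on $\Gamma$, hence $\int F_{c^\delta}\,d\eta=\lim_\eps\int F_{c^\delta}\,d\eta^\eps$. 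Along $\eta^\eps$ one has $\gamma(t)-\gamma(0)-\int_0^t c^\delta(s,\gamma(s))\,ds=\int_0^t(b^\eps-c^\delta)(s,\gamma(s))\,ds$, so $\int F_{c^\delta}\,d\eta^\eps\le T\int_0^T\!\int|b^\eps-c^\delta|\,d\mu_s^\eps\,ds$; and since $b_s^\eps\mu_s^\eps=(c_s\mu_s)*\rho_\eps$, a change of variables gives $\int|b_s^\eps-c_s^\delta|\,d\mu_s^\eps\le\iint|c_s(y)-c_s^\delta(x)|\,\rho_\eps(x-y)\,d\mu_s(y)\,dx$, whose limit as $\eps\to0$ (with $\delta$ fixed) is $\int|c_s-c_s^\delta|\,d\mu_s$. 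Combining this with the elementary bound $F_c\le F_{c^\delta}+T\int_0^T|c-c^\delta|(s,\gamma(s))\,ds$ and integrating against $\eta$ yields $\int F_c\,d\eta\le 2T\int_0^T\!\int|c_s-c_s^\delta|\,d\mu_s\,ds$; letting $\delta\to0$ then forces $F_c=0$ $\eta$-a.e., provided the right-hand side vanishes.

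I expect this last step to be the main obstacle. Mollification gives $c^\delta\to c$ only in $L^1_{\mathrm{loc}}$ with respect to the Lebesgue measure, so $\int_0^T\!\int|c_s-c_s^\delta|\,d\mu_s\,ds\to0$ is not automatic when the measures $\mu_s$ are singular. In the setting relevant to this paper the measures $\mu_s$ arising will be absolutely continuous with equibounded densities — which is exactly what near incompressibility provides — so $\int_0^T\!\int|c_s-c_s^\delta|\,d\mu_s\,ds\le C\int_0^T\!\int|c_s-c_s^\delta|\,dx\,ds\to0$ and the argument closes immediately; in full generality one invokes Ambrosio's sharper form of this estimate, or routes the proof through Smirnov's decomposition of divergence-free normal one-currents in space-time. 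The remaining technicalities — the weakly$^{\star}$ continuous representative of $t\mapsto\mu_t$, the measurability in $t$ of $b^\eps$ needed for the Carath\'eodory flow, and the localization reducing to a probability measure — are standard.
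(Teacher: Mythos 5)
The paper does not prove this statement: Theorem \ref{thm:superposition} is presented as a quotation of Ambrosio's result from \cite{ambrosiobv}, with no proof given, so there is no internal argument to compare against. Your outline is the standard mollification proof of the superposition principle (regularize $\mu_t$ and $b\mu_t$, flow the regularized field, push forward $\mu_0^\eps$ to path space, tightness from equi-Lipschitz curves, pass to the limit, disintegrate with respect to $e_0$), and everything up to the last step is sound, including $\|b^\eps\|_\infty\le\|b\|_\infty$, the identity $(e_t)_\#\eta^\eps=\mu_t^\eps$, and the reduction to probability measures via finite propagation speed.

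The obstacle you flag at the end is real, and as written that step does fail: with $c^\delta=c*\rho_\delta$ one only gets $c^\delta\to c$ in $L^1_{\rm loc}(\L^d)$, so $\int_0^T\int|c_s-c_s^\delta|\,d\mu_s\,ds$ need not vanish when the $\mu_s$ are singular, whereas the theorem is stated for arbitrary positive locally finite measure-valued solutions. (Your observation that the paper only ever applies the theorem to $\mu_t=\rho(t,\cdot)\L^2$ with $\rho$ bounded is correct, and in that case your argument does close.) The way Ambrosio closes the general case is not a sharper mollification estimate but a change of approximating class: since $\tilde\mu:=\int_0^T\mu_t\,dt$ is Radon, bounded \emph{continuous} fields $c'$ are dense in $L^1(\tilde\mu)$, and for such $c'$ one splits
\[
\int \vert b_s^\eps-c_s'\vert\,d\mu_s^\eps \;\le\; \int\bigl\vert\bigl((b_s-c_s')\mu_s\bigr)*\rho_\eps\bigr\vert\,dx \;+\; \int\bigl\vert (c_s'\mu_s)*\rho_\eps - c_s'\,(\mu_s*\rho_\eps)\bigr\vert\,dx ,
\]
where the first term is bounded by $\int\vert b_s-c_s'\vert\,d\mu_s$ (convolution is a contraction in total variation) and the second, a commutator between convolution and multiplication by the continuous function $c_s'$, tends to $0$ as $\eps\to0$. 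Feeding this into your functional $F_{c'}$ yields $\int F_b\,d\eta\le 2T\int_0^T\int\vert b_s-c_s'\vert\,d\mu_s\,ds$, which is arbitrarily small by the choice of $c'$, giving $F_b=0$ $\eta$-a.e. With that substitution (plus the standard device of a full-support mollifier, or an added small Gaussian, so that $\mu_t^\eps>0$ everywhere and $b^\eps$ is well defined and locally Lipschitz in $x$), your argument becomes a complete proof in the stated generality.
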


In other words, any nonnegative measure-valued solution $\mu_t$ of the continuity equation \eqref{eq:continuity-superposition}
can be represented as
\begin{equation}\label{e-eta-def}
\mu_t = {e_t}_\# \eta,
\end{equation}
where $\eta$ is some nonnegative measure on the space of continuous curves $\Gamma$, which is concentrated on the integral curves
of the vector field $b$. In terms of Theorem~\ref{thm:superposition} this measure $\eta$ can be defined by
\begin{equation*}
\eta = \int_{\R^d} \eta_x \, d\bar\mu(x).
\end{equation*}
(I.e. the family $\{\eta_x\}_{x\in \R^d}$ is the disintegration of $\eta$ under the map $e_0$.)

\subsection{Partition and curves}
\label{Ss_parti_curves}

Let $b \colon \R^2 \to \R^{2}$ be an \emph{autonomous}, nearly incompressible vector field, with $b \in \BV(\R^2) \cap L^{\infty}(\R^2)$; we assume $b$ is compactly supported (with support in the unit ball of $\R^2$, $\mathbb B:=B(0,1)$), defined everywhere and Borel. Let us consider the countable covering $\mathscr B$ of $\R^{2}$ given by 
\begin{equation*}
\mathscr B:= \left\{ B(x,r): \, x \in \Q^{2}, r \in \Q^{+} \right\}. 
\end{equation*}
For each ball $B \in \mathscr B$, we are interested to the trajectories of $b$ which cross $B$, staying inside $B$ for a positive amount of time. We therefore define, for every ball $B \in \mathscr B$ and for every rational numbers $s,t \in \Q \cap (0,T)$ such that $s<t$, the sets
\begin{equation*}
\mathsf T_{B,s,t} := \left\{ \gamma \in \Gamma_{B}: \, \gamma(s) \notin B, \gamma(t) \notin B\right\}.
\end{equation*}
We recall that (see Notations) 
\begin{equation*}
\Gamma_{B} := \left\{\gamma \in \Gamma: \L^{1}(\{ t \in [0,T]: \gamma(t) \in B\})>0\right\}. 
\end{equation*}

In this first section we will work for simplicity with the sets $T_B:=T_{B,0,T}$, where $B \in \mathscr B$ (and without any loss of generality we assume $T \in \Q$). 

\begin{remark}
\label{rem:unione-tb-tutto-moving} It is fairly easy to see that
\begin{equation*}
\bigcup_{B \in \mathscr B} \mathsf T_{B} = \tilde{\Gamma}.
\end{equation*}
Indeed, for every curve which is moving there exists a point $\gamma(t) \not= \gamma(0), \gamma(T)$, so that one has just to choose a ball in $\mathscr B$ containing $\gamma(t)$ but not $\gamma(0)$, $\gamma(T)$.
\end{remark}

%In particular, we note that for every $\gamma \in \mathsf T_{B}$ there exists a set of times $I \subset [0,T]$ such that $\gamma(I) \subset B$ and $\L^{1}(I) \ge \frac{r}{\Vert b \Vert_{\infty}}$, which means that the curves $\gamma$ stay in $B$ for an uniform amount of time.

By Definition \ref{def:ni}, there exists a function $\rho \colon [0,T]\times \R^2 \to \R$ which satisfies continuity equation \eqref{eq:rho_cont} in $\mathscr D^{\prime}((0,T) \times \R^2)$.
Therefore, by Ambrosio's Superposition Principle \ref{thm:superposition}, there exists a measure $\eta$ on $\Gamma$, concentrated on the set of trajectories of $b$, such that 
\begin{equation}\label{eq:rho-e-eta}
\rho(t,\cdot) \L^{2} = ({e_{t}})_{\#} \eta ,
\end{equation}
where we recall that $e_{t} \colon \Gamma \to \R^2$ is the evaluation map $\gamma \mapsto \gamma(t)$. For a fixed ball $B \in \mathscr B$, we consider the measure $\eta_{B}:=\eta  \rest \mathsf T_B$ and we define $\rho_{B}$ by $\rho_{B}(t,\cdot)\L^{2} = {(e_{t})}_{\#} \eta_{B}$. Then we set
\begin{equation}
\label{E_def_r_B}
r_{B}(x) := \int_{0}^{T} \rho_{B}(t,x)dt, \qquad x \in B.
\end{equation}

\begin{lemma} \label{lemma:divrb} It holds $\dive(r_{B}b)=0$ in $\mathscr D^{\prime}(B)$.\end{lemma}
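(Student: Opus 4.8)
The plan is to derive the stationary continuity equation for $r_B$ by integrating the time-dependent continuity equation satisfied by $\rho_B$ over the interval $[0,T]$, and then checking that the boundary terms at $t=0$ and $t=T$ drop out \emph{on the ball $B$} because the curves in $\mathsf T_B$ start and end outside $B$. First I would verify that $t\mapsto\rho_B(t,\cdot)$ solves the continuity equation $\partial_t\rho_B+\dive(\rho_B b)=0$ in $\mathscr D'((0,T)\times B)$. This should follow from the fact that $\rho_B(t,\cdot)\L^2=(e_t)_\#\eta_B$ with $\eta_B=\eta\rest\mathsf T_B$ concentrated on integral curves of $b$: indeed, for any measure concentrated on solutions of $\dot\gamma=b(\gamma)$, its time-marginal family $(e_t)_\#\eta_B$ automatically solves the continuity equation in the sense of distributions on all of $(0,T)\times\T^2$ (this is the ``easy'' direction of the superposition principle, a direct computation using $\frac{d}{dt}\varphi(\gamma(t))=b(\gamma(t))\cdot\nabla\varphi(\gamma(t))$). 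Restricting the test functions to those supported in $B$ gives the equation on $B$.

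Next, given $\psi\in C_c^\infty(B)$, I want to show $\int_B r_B(x)\,b(x)\cdot\nabla\psi(x)\,dx=0$. Take a test function of the form $\phi(t,x)=\chi(t)\psi(x)$ where $\chi\in C^\infty([0,T])$ with $\chi\equiv 1$ on a neighbourhood of $[0,T]$... more carefully, I would use the distributional identity against $\phi(t,x)=\psi(x)$ (constant in $t$): the continuity equation for $\rho_B$ on $(0,T)\times B$, tested against a function independent of $t$, reads, after integrating by parts in $t$,
\begin{equation*}
\int_B \big(\rho_B(T,x)-\rho_B(0,x)\big)\psi(x)\,dx - \int_0^T\!\!\int_B \rho_B(t,x)\,b(x)\cdot\nabla\psi(x)\,dx\,dt = 0.
\end{equation*}
By Fubini the second term equals $\int_B r_B(x)\,b(x)\cdot\nabla\psi(x)\,dx$, so it remains to show the boundary term vanishes. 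Here I use the definition of $\mathsf T_B$: every $\gamma\in\mathsf T_B$ satisfies $\gamma(0)\notin B$ and $\gamma(T)\notin B$, hence $(e_0)_\#\eta_B$ and $(e_T)_\#\eta_B$ are concentrated on $\T^2\setminus B$ (in fact on $\partial B$ or outside), so $\rho_B(0,\cdot)=\rho_B(T,\cdot)=0$ $\L^2$-a.e. on $B$, and since $\psi$ is supported in $B$ the boundary term is zero. Therefore $\dive(r_B b)=0$ in $\mathscr D'(B)$.

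A technical point to be careful about is that the weak*-continuous representative of $\rho_B$ is the one for which the values $\rho_B(0,\cdot)$ and $\rho_B(T,\cdot)$ make sense, and that one must justify testing the distributional equation on $(0,T)\times B$ against a function that is merely continuous (not compactly supported) in the $t$-variable up to the endpoints; this is standard once one knows $t\mapsto\rho_B(t,\cdot)$ has a weak*-continuous representative and that $\rho_B\in L^\infty$ (which follows since $\eta_B\le\eta$ and $\rho\L^2=(e_t)_\#\eta$ with $\ln\rho\in L^\infty$, so $0\le\rho_B\le\rho$). I expect the main obstacle to be precisely this endpoint/boundary-term bookkeeping: making rigorous that $\rho_B$ vanishes on $B$ at the initial and final times (using $\gamma(0),\gamma(T)\notin B$) and that this kills the boundary contribution, rather than the interior computation, which is routine. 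The rest, verifying that $(e_t)_\#\eta_B$ solves the continuity equation and applying Fubini, is straightforward.
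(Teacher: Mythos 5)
Your proof is correct and is essentially the paper's argument in a slightly different packaging: the paper performs the curve-level computation in one step (write the integral over $\mathsf T_B$, use $\dot\gamma=b(\gamma)$ to recognize $\frac{d}{dt}\phi(\gamma(t))$, integrate in $t$, and conclude from $\gamma(0),\gamma(T)\notin B$), whereas you first record that $\rho_B$ solves the continuity equation (which is exactly that same chain-rule computation) and then integrate in time against a $t$-independent test function, killing the boundary terms by the same observation that $(e_0)_{\#}\eta_B$ and $(e_T)_{\#}\eta_B$ charge no mass inside $B$. Both routes rest on identical facts, so this is the same proof.
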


\begin{proof} For any $\phi \in C_{c}^{\infty}(B)$ we have
\begin{equation*}
\begin{split}
\int_{B} r_{B} b(x) \cdot \nabla \phi(x) dx & = \int_{B}\int_{0}^{T} \rho_{B}(t,x)b(x) \cdot \nabla \phi(x) \, dt \, dx \\ 
                             & = \int_{0}^{T}\int_{\mathsf T_{B}} b(\gamma(t)) \cdot (\nabla \phi)(\gamma(t)) \, d\eta_{B} \,dt \\
                             & = \int_{0}^{T}\int_{\mathsf T_{B}} \dot{\gamma}(t)  \cdot (\nabla \phi)(\gamma(t)) \, d\eta_{B} \,dt \\
                             & = \int_{0}^{T}\int_{\mathsf T_{B}} \frac{d}{dt}\phi(\gamma(t)) \, d\eta_{B} \,dt \\ 
                             & = \int_{\mathsf T_{B}} \Big[\phi(\gamma(T))-\phi(\gamma(0))\Big] \, d\eta_{B} = 0.
\end{split}
\end{equation*}
because for $\eta_{B}$-a.e. $ \gamma \in \mathsf T_{B}$, $\gamma(0) \notin B$, $\gamma(T) \notin B$.
\end{proof}

\section{Recent results for uniqueness in the two dimensional case}
\label{S_recent_resu}
We recall here some facts about uniqueness of bounded solutions for the continuity equation in the two dimensional case, following in particular \cite{ABC2, ABC1}.  

\subsection{Structure of level sets of Lipschitz functions}

Let $\Omega \subset \R^{2}$ be a bounded, open set and let $f \colon \Omega \to \R$ be a Lipschitz function. For any $r \in \mathbb R$, we denote by $E_r:=f^{-1}(r)$ the corresponding level set. 

\begin{theorem}[{\cite[Thm. 2.5]{ABC2}}] \label{thm:ABC2_structure} Suppose that $f \colon \Omega \to \R$ is a compactly supported Lipschitz function. For any $r \in \mathbb R$, let $E_r:=f^{-1}(r)$. Then the following statements hold for $\mathscr L^1$-a.e. $r \in f(\Omega)$: 
\begin{enumerate}
\item[\rm (1)] $\H^1(E_r)<\infty$ and $E_r$ is countable $\H^{1}$-rectifiable; 
\item[\rm (2)] for $\H^{1}$-a.e. $x \in E_{r}$ the function $f$ is differentiable at $x$ with $\nabla f(x) \ne 0$; 
\item[\rm (3)] $\Conn^{\star}(E_r)$ is countable and every $C \in \Conn^{*}(E_r)$ is a closed simple curve;
\item[\rm (4)] $\H^1(E_r \setminus E^{*}_r)=0$.
\end{enumerate}
\end{theorem}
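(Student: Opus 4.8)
The plan is to reduce the claim about level sets $E_r = f^{-1}(r)$ of the compactly supported Lipschitz function $f$ to a combination of the coarea formula and classical facts about connected sets of finite $\mathcal{H}^1$ measure. First I would invoke the coarea formula for Lipschitz functions: since $f \in \mathrm{Lip}(\Omega)$ with compact support, $\int_{\R} \mathcal{H}^1(E_r) \, dr = \int_\Omega |\nabla f| \, dx < \infty$, which immediately gives $\mathcal{H}^1(E_r) < \infty$ for $\mathcal{L}^1$-a.e. $r$. The rectifiability in (1) then follows because $E_r$, being a level set of a Lipschitz function with $\mathcal{H}^1(E_r) < \infty$, is contained in a set that can be covered efficiently; more robustly, one uses that for a.e. $r$ the slice $E_r$ is $\mathcal{H}^1$-rectifiable — this is part of the standard coarea/slicing package (see e.g.\ the structure theory in \cite{AFP}), and I would cite it rather than reprove it.

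For (2), the idea is to again use the coarea formula together with Rademacher's theorem. Let $N \subset \Omega$ be the $\mathcal{L}^2$-null set where $f$ fails to be differentiable; by coarea, $\int_\R \mathcal{H}^1(N \cap E_r)\,dr \le \int_N (\text{something}) = 0$ once one is careful, so $\mathcal{H}^1(N \cap E_r) = 0$ for a.e.\ $r$ — actually one must be slightly careful since the coarea formula integrand involves $|\nabla f|$ which vanishes on part of $N$; the cleaner route is to split $\Omega$ into the set $\{\nabla f \neq 0\}$ (defined a.e.) and its complement, note that $f$ restricted to $\{\nabla f = 0\}$ pushes forward a null measure, hence $\mathcal{L}^1$-a.e.\ level $r$ meets $\{\nabla f = 0\}$ in an $\mathcal{H}^1$-null set, and on the complement $\nabla f \neq 0$. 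Combined with the a.e.\ differentiability this yields (2). Statement (4) is essentially a corollary: once $E_r$ is $\mathcal{H}^1$-rectifiable with finite measure, a general fact about such sets is that the union of its connected components with positive $\mathcal{H}^1$ measure carries all the mass — the ``dust'' (components of zero measure, i.e.\ points) is $\mathcal{H}^1$-negligible for a.e.\ $r$; this can be extracted from the same slicing theory, using that for a.e.\ $r$ the coarea density is concentrated on the approximate tangent lines.

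The main work, and the main obstacle, is (3): showing $\Conn^\star(E_r)$ is countable and that each such connected component is a \emph{closed simple curve}. Here I would argue as follows. A connected set $C$ with $0 < \mathcal{H}^1(C) < \infty$ is automatically arcwise connected and is a continuous image of $[0,1]$ — this is a classical theorem (a connected compact set of finite $\mathcal{H}^1$ measure is a ``curve'' in the sense of being the image of a Lipschitz map), and with the extra structure coming from $f$ being differentiable with $\nabla f \neq 0$ along $C$ (by (2)) one upgrades this: near each of its points $C$ locally coincides (by the implicit function theorem applied to the Lipschitz function, in its a.e.\ form, or more carefully via the coarea structure) with a Lipschitz graph, so $C$ is a one-dimensional Lipschitz submanifold without boundary inside the compact support, hence a finite disjoint union of simple closed curves — and being connected it is a single simple closed curve. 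Countability of $\Conn^\star(E_r)$ is then forced because the components are pairwise disjoint, each has positive $\mathcal{H}^1$ measure, and the total measure $\mathcal{H}^1(E_r)$ is finite. The delicate point throughout is that $f$ is merely Lipschitz, so ``$\nabla f \neq 0$'' holds only $\mathcal{H}^1$-a.e.\ on $E_r$ and one cannot directly invoke the smooth implicit function theorem; the rigorous version of this argument is exactly the content of \cite[Thm.\ 2.5]{ABC2}, so in practice I would state (1)--(4) as a direct quotation of that theorem and defer the internal structure-of-level-sets analysis to \cite{ABC2}, since reproving it from scratch would essentially duplicate that paper.
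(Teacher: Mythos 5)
The paper gives no proof of this statement: it is imported verbatim as \cite[Thm.~2.5]{ABC2}, and your conclusion --- state (1)--(4) and defer the structure-of-level-sets analysis to \cite{ABC2} --- is exactly what the paper does. Your sketch of the underlying ideas (coarea formula for (1)--(2), finiteness of $\H^1(E_r)$ forcing countability in (3)) is consistent with that reference, and you correctly flag that the closed-simple-curve claim in (3) and the negligibility of the ``dust'' in (4) are the genuinely delicate points that cannot be dispatched by an implicit-function-theorem heuristic, so deferring them is the right call.
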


For brevity, we will say that the level set $E_r$ is \emph{regular with respect to} $\Omega$ if it satisfies conditions (1)-(2)-(3)-(4) (or it is empty). In this way, the theorem above can be stated by saying that for a.e. $r \in \R$ the level sets $E_r$ are regular with respect to $\Omega$.

\subsection{Disintegration of Lebesgue measure with respect to Hamiltonians}
\label{ss:disintegration-lebesgue}

From Lemma \ref{lemma:divrb} we have $\dive(rb)=0$ in $B$; since $B$ is simply connected, there exists a Lipschitz potential $H_{B} \colon B \to \R$ such that
\begin{equation*}
\nabla^{\perp}H_{B}(x) = r_{B}(x)b(x), \qquad \text{for } \L^{2}\text{-a.e. } x \in B.
\end{equation*}
Using Theorem \ref{thm:ABC2_structure} on the Lipschitz function $H_B$, we can define the negligible set $N_1$ such that $E_h$ is regular in $B$ whenever $h\notin N_1$; moreover, let $N_{2}$ denote the negligible set on which the measure $((H_B)_\# \L^2)^\rr{sing}$ is concentrated, where $((H_B)_\# \L^2)^\rr{sing}$ is the singular part of $((H_B)_\# \L^2)$ with respect to $\L^{1}$. Then we set 
\begin{equation}\label{Ndef}
N:=N_1 \cup N_2
\qquad
\text{and}
\qquad 
E^*:= \cup_{h\notin N} E_h^*
\end{equation}
Therefore we can associate to $B$ a triple $(H_B,N,E)$. 
For any $x\in E$ let $C_x$ denote the connected component of $E$ such that $x\in C_x$.
By definition of $E$ for any $x \in E$ the corresponding connected component $C_x$ has strictly positive length.

Let us fix an arbitrary ball $B\in \mathscr B$. For brevity let $H$ denote the corresponding Hamiltonian $H_B$.

\begin{lemma}[{\cite[Lemma 2.8]{ABC1}}] \label{disint-in-ball} 
There exist Borel families of measures $\sigma_h,\kappa_h$, $h \in \R$, such that
\begin{equation} \label{disint-lebesgue}
\L^2 \rest B = \int \left( c_h \H^1 \rest E_h + \sigma_h \right) \, d h
+ \int \kappa_h \, d\zeta(h),
\end{equation}
where
\begin{enumerate}
\item $c_h \in L^1(\H^1 \rest E^\star_h)$, $c_h >0$ a.e.; moreover, by Coarea formula, we have $c_h=1/|\nabla H|$
a.e. (w.r.t. $\H^1\rest E^\star_h$);
\item $\sigma_h$ is concentrated on $E^\star_h \cap \{\nabla H = 0\}$;
\item $\kappa_h$ is concentrated on $E^\star_h \cap \{\nabla H = 0\}$; 
\item \label{Point_4_lemma_dis_bal} $\zeta := H_\# \L^2 \rest (B\setminus E^{\star})$ is concentrated on $N$ (hence $\zeta  \perp \L^1$);
\item $\sigma_{h}$ is concentrated on $E_{h} \cap \{b \ne 0, r_{B} = 0\}$.
\end{enumerate}
\end{lemma}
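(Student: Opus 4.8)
The plan is to realize \eqref{disint-lebesgue} as a two-stage disintegration of $\L^2 \rest B$ along the fibres of $H = H_B$: first isolating the part of $H_\#(\L^2\rest B)$ which is absolutely continuous w.r.t.\ $\L^1$, then, on each fibre, the part which is absolutely continuous w.r.t.\ $\H^1$. Concretely, I split
\[
\L^2 \rest B = \L^2 \rest E^\star + \L^2 \rest (B\setminus E^\star),
\qquad
\zeta := H_\#\big(\L^2 \rest (B\setminus E^\star)\big),
\]
and treat the two summands separately. Since every point of $E^\star$ lies in some $E_h$ with $h\notin N\supseteq N_2$, the measure $H_\#(\L^2\rest E^\star)$ is concentrated on $\R\setminus N_2$ and hence absolutely continuous w.r.t.\ $\L^1$ by the definition of $N_2$; write it as $g(h)\,dh$ with $g\in L^1$. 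The Disintegration Theorem gives $\L^2\rest E^\star=\int\lambda_h\,dh$ with $\lambda_h$ a measure of mass $g(h)$ concentrated, for a.e.\ $h\notin N$, on $E_h\cap E^\star=E_h^\star$. Splitting $\lambda_h=c_h\,\H^1\rest E_h^\star+\sigma_h$ into its $\H^1\rest E_h^\star$-absolutely continuous and singular parts and invoking the coarea formula for $H$, which yields $\L^2\rest(E^\star\cap\{\nabla H\neq 0\})=\int \big(1/|\nabla H|\big)\,\H^1\rest(E_h^\star\cap\{\nabla H\neq 0\})\,dh$ (using $E_h\cap E^\star=E_h^\star$ a.e.), uniqueness of the disintegration forces $c_h=1/|\nabla H|$ $\H^1$-a.e.\ on $E_h^\star$ and $\sigma_h$ concentrated on $E_h^\star\cap\{\nabla H=0\}$ for a.e.\ $h$. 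Fubini together with $\L^2(B)<\infty$ gives $c_h\in L^1(\H^1\rest E_h^\star)$ for a.e.\ $h$, and Theorem~\ref{thm:ABC2_structure}(2), i.e.\ $\H^1(E_h\cap\{\nabla H=0\})=0$ for a.e.\ $h$, turns $c_h\,\H^1\rest E_h^\star$ into $c_h\,\H^1\rest E_h$ and yields $c_h>0$ $\H^1$-a.e.; this is (1) and (2). For the second summand the Disintegration Theorem yields $\L^2\rest(B\setminus E^\star)=\int\kappa_h\,d\zeta(h)$ with $\kappa_h$ a probability measure concentrated on $E_h\cap(B\setminus E^\star)$, which for $h\notin N$ equals $E_h\setminus E_h^\star$; adding the two identities gives \eqref{disint-lebesgue}, and the measurability of $h\mapsto c_h,\sigma_h,\kappa_h$ is part of the measurable form of the Disintegration Theorem \cite{AFP,Fre:measuretheory4}.

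It remains to prove \eqref{Point_4_lemma_dis_bal}, namely that $\zeta$ is concentrated on $N$, equivalently $\L^2\big((B\setminus E^\star)\cap H^{-1}(\R\setminus N)\big)=0$. Applying the coarea formula to $\1_{(B\setminus E^\star)\cap H^{-1}(\R\setminus N)}$ and using $E_h\cap(B\setminus E^\star)=E_h\setminus E_h^\star$ for $h\notin N$ together with Theorem~\ref{thm:ABC2_structure}(4), one finds $|\nabla H|=0$ $\L^2$-a.e.\ on that set, so it suffices to show $H_\#\big(\L^2\rest(\{\nabla H=0\}\setminus E^\star)\big)\perp\L^1$. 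Granting this, $H_\#(\L^2\rest(B\setminus E^\star))$ is singular w.r.t.\ $\L^1$; since it is $\le H_\#(\L^2\rest B)$ and $\perp\L^1$, it is $\le\big(H_\#(\L^2\rest B)\big)^{\mathrm{sing}}$, hence concentrated on $N_2\subseteq N$. This also forces $\kappa_h$ to be concentrated, for $\zeta$-a.e.\ $h$, on the part of $E_h$ inside $B\setminus E^\star$ and $\{\nabla H=0\}$, which is (3). A partial step is the observation that any $h$ with $\L^2(E_h)>0$ automatically lies in $N_1$ — because then $\H^1(E_h)=+\infty$ — so that no single fibre of $H$ over $\R\setminus N$ carries positive area.

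The main obstacle is exactly this last singularity statement. Since Sard's lemma fails for merely Lipschitz maps, one cannot discard $\L^1$-positive images of the critical set $\{\nabla H=0\}$ on soft grounds, and the per-fibre information just noted does not by itself control the disintegration. One has to return to the fine structure of the level sets of $H$ and show — by a covering/density argument in the spirit of \cite{ABC2} and using Theorem~\ref{thm:ABC2_structure}(1)--(2) — that the conditional measures of $\L^2\rest B$ cannot charge the $\H^1$-negligible ``dust'' $E_h\setminus E_h^\star$ on an $\L^1$-positive set of $h$; equivalently, that the absolutely continuous (in $h$) part of $H_\#(\L^2\rest B)$ is entirely accounted for by the part of $B$ contained in the positive-length components of the level sets of $H$. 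The rest is routine book-keeping with the Disintegration Theorem and the coarea formula.
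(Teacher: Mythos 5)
First, a point of comparison: the paper does not prove this lemma at all --- it is imported verbatim from \cite[Lemma 2.8]{ABC1} --- so your proposal cannot be measured against an in-paper argument. Measured on its own terms, the soft half of your sketch is correct and is the standard route: the two-stage disintegration, the coarea identification $c_h = 1/|\nabla H|$ on $\{\nabla H \neq 0\}$ together with $\H^1(E_h \cap \{\nabla H = 0\}) = 0$ for a.e.\ $h$, the resulting concentration of $\sigma_h$ on $E_h^\star \cap \{\nabla H = 0\}$, and the observation that a measure dominated by $H_\#(\L^2 \rest B)$ and singular w.r.t.\ $\L^1$ must be concentrated on $N_2$. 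One caveat: your opening step (``every point of $E^\star$ lies in some $E_h$ with $h \notin N$'') is only valid if $E^\star$ is read as the set $E = \bigcup_{h \notin N} E_h^\star$ of \eqref{Ndef}; under the other natural reading $E^\star = \bigcup_{h \in \R} E_h^\star$, the absolute continuity of $H_\#(\L^2 \rest E^\star)$ is simply false (take $H$ constant on an open subset of $B$: that set lies in a positive-length component but pushes forward to an atom). Since the lemma is only true under the first reading, your choice is the right one, but it should be made explicit.

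The genuine gap is the one you yourself flag: the claim that $H_\#\bigl(\L^2 \rest (\{\nabla H = 0\} \setminus E^\star)\bigr) \perp \L^1$, equivalently that for $\L^1$-a.e.\ $h \notin N$ the conditional measure of $\L^2 \rest B$ on $E_h$ gives no mass to the $\H^1$-negligible ``dust'' $E_h \setminus E_h^\star$. Everything in points (3) and (4) hinges on this, coarea gives no information on the critical set, your per-fibre observation ($\L^2(E_h) > 0 \Rightarrow h \in N_1$) only disposes of the atomic part of the pushforward, and Sard-type arguments are unavailable for Lipschitz maps. Writing ``by a covering/density argument in the spirit of \cite{ABC2}'' is a placeholder, not a proof: this is precisely the non-routine content of \cite[Lemma 2.8]{ABC1}, and it is where the structure theorem of \cite{ABC2} enters irreplaceably (one has to rule out that an $\L^1$-positive family of regular level sets each carries conditional mass on its zero-length components). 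A secondary, related omission: your argument for point (3) only yields that $\kappa_h$ is concentrated on $E_h \cap (B \setminus E^\star) \cap \{\nabla H = 0\}$, not on $E_h^\star \cap \{\nabla H = 0\}$ as stated; excluding the zero-length components for $\zeta$-a.e.\ $h$ is again a structural fact about level sets, not book-keeping. So the proposal is an accurate map of the proof, with the two summits left unclimbed.
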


\begin{proof}
Points (1)-(4) are exactly \cite[Lemma 2.8]{ABC1}. Concerning (5), it can be proved using minor modifications of the proof of \cite[Theorem 8.2]{BG}: indeed, we have that, being $b$ of class BV and hence approximately differentiable a.e., $H_\# \L^2 \rest \{b=0\} \perp \L^1$: by comparing two disintegrations of $\L^2\rest \{b=0\}$ we conclude that $\sigma_h$ is concentrated on $\{b \ne 0\}$ for a.e. $h$.\end{proof}

\begin{remark}
\label{rem:h1-perp-sigma}
Using Coarea formula (see Lemma \ref{lemma-coarea}), we can show
\begin{equation*}
\H^1(E_h \cap \{\nabla H = 0\}) = 0
\end{equation*}
for $\L^1$-a.e. $h\notin N$. Therefore $\sigma_h \perp \H^1$ for $\L^1$-a.e. $h\notin N$.
\end{remark}

\begin{remark}\label{orientation-by-b}
Thanks to \eqref{disint-lebesgue} we always can add to $N$, if necessary, an $\L^1$-negligible set so that for any $h\notin N$ for $\H^1$-a.e. $x\in E_h^\star$
we have $r(x) > 0$, $b(x)\ne 0$ and  $r(x)b(x) = \nabla^\perp H(x)$.
\end{remark}
 
\subsection{Reduction of the equation on the level sets.}

Our goal is now to study the equation $\dive (ub)= \mu$, where $u$ is a bounded Borel function on $\R^2$ and $\mu$ is a Radon measure on $\R^2$, inside a ball from the collection $\mathscr B$. 

% We need first the following elementary
% \begin{lemma}\label{lemma:splitting-lemma} Suppose that $\sigma, \theta$ are mutually singular Radon measures on $\R$. Let $F \in L^{1}(\R, \sigma)$ and $G \in L^{1}(\R, \theta)$. Suppose that
% \begin{equation*}
% \int_{\R} F(h) \psi(h)\, d\sigma(h) + \int_{\R} G(h)\psi(h)\, d\theta(h) = 0
% \end{equation*}
% holds for every $\psi \in C_{c}^{\infty}(\R)$. Then $F(h)=0$ for $\sigma$-a.e. $h \in \R$ and $G(h)=0$ for $\theta$-a.e. $h \in \R$.\end{lemma}
% We can now state and prove the following result.

\begin{lemma}\label{disint-div-ub}
Suppose that $\mu$ is a Radon measure on $\R^2$ and $u \in L^{\infty}(\R^2)$. Then equation
\begin{equation}\label{eq:div-ub-first-time}
\dive (ub) = \mu 
\end{equation}
holds in $\ss D^{\prime}(B)$ if and only if:
\begin{itemize}
\item the disintegration of $\mu$ with respect to $H$ has the form
\begin{equation} \label{H-disint-mu}
\mu = \int \mu_h \, dh + \int \nu_{h} \, d \zeta(h),
\end{equation}
where $\zeta$ is defined in Point \eqref{Point_4_lemma_dis_bal} of Lemma~\ref{disint-in-ball};
\item for $\L^1$-a.e. $h$
\begin{equation}\label{div-on-level-set}
\dive \left( u c_h b \H^1 \rest E_h \right) + \dive (u b \sigma_h) = \mu_h;
\end{equation}
\item for $\zeta$-a.e. $h$
\begin{equation}\label{eq:div-on--level-set-bad-term}
\dive (u b \kappa_h) = \nu_h.
\end{equation}
\end{itemize}
\end{lemma}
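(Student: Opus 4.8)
The plan is to test the equation \eqref{eq:div-ub-first-time} against test functions of a special ``product'' type adapted to the Hamiltonian $H$, and to combine this with the disintegration \eqref{disint-lebesgue} of the Lebesgue measure. First I would observe that, by definition, \eqref{eq:div-ub-first-time} holds in $\ss D'(B)$ if and only if
\begin{equation*}
-\int_B u(x)\, b(x)\cdot\nabla\phi(x)\,dx = \int_B \phi\, d\mu
\qquad \text{for all } \phi\in C_c^\infty(B).
\end{equation*}
Plugging in $\phi(x)=\psi(x)\,\chi(H(x))$ with $\psi\in C_c^\infty(B)$ and $\chi\in C_c^\infty(\R)$, and using $\nabla(\chi\circ H)=\chi'(H)\nabla H$ together with $b\cdot\nabla^\perp H = 0$ a.e. (which follows from $r_B b=\nabla^\perp H$ on $\{r_B>0\}$ and a separate check on $\{r_B=0\}$, see Remark~\ref{orientation-by-b}), the term $u\,b\cdot\nabla(\chi\circ H)\psi$ drops out, so only $\chi(H)\,u\,b\cdot\nabla\psi$ survives on the left. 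This is the key algebraic point: derivatives ``in the $H$ direction'' are annihilated by $b$, so the equation decouples into the level-set directions.

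Next I would insert the disintegration \eqref{disint-lebesgue} of $\L^2\rest B$ into the left-hand side: writing $\L^2\rest B = \int (c_h\H^1\rest E_h + \sigma_h)\,dh + \int \kappa_h\,d\zeta(h)$ and using that $H=h$ on the support of each fiber measure, the left-hand side becomes
\begin{equation*}
-\int \chi(h)\Big( \int u\, b\cdot\nabla\psi\, (c_h\,d\H^1\rest E_h + d\sigma_h)\Big)\,dh
- \int \chi(h)\Big(\int u\, b\cdot\nabla\psi \, d\kappa_h\Big)\,d\zeta(h).
\end{equation*}
On the right-hand side I would disintegrate $\mu$ with respect to $H$: since $H_\#|\mu|$ may have both an absolutely continuous part (w.r.t. $\L^1$) and a part absorbed by $\zeta$, one gets exactly the decomposition \eqref{H-disint-mu}, $\mu=\int\mu_h\,dh+\int\nu_h\,d\zeta(h)$, with $\mu_h$ concentrated on $E_h$ and $\nu_h$ on $E_h\cap\{\nabla H=0\}$ (using Lemma~\ref{disint-in-ball}). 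Then $\int\phi\,d\mu = \int\chi(h)\psi\,d\mu_h\,dh + \int\chi(h)\psi\,d\nu_h\,d\zeta(h)$. Equating the two sides and using that $\chi$ is an arbitrary compactly supported smooth function, together with the mutual singularity $\zeta\perp\L^1$, I would conclude that the identities must hold ``slice by slice'': for $\L^1$-a.e.\ $h$,
\begin{equation*}
-\int u\, b\cdot\nabla\psi\,(c_h\,d\H^1\rest E_h + d\sigma_h) = \int \psi\,d\mu_h,
\end{equation*}
which is precisely \eqref{div-on-level-set} in distributional form, and for $\zeta$-a.e.\ $h$ the analogous identity \eqref{eq:div-on--level-set-bad-term}. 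The converse direction is obtained by reversing these steps: assuming \eqref{H-disint-mu}, \eqref{div-on-level-set}, \eqref{eq:div-on--level-set-bad-term}, integrate \eqref{div-on-level-set} against $\chi(h)\,dh$ and \eqref{eq:div-on--level-set-bad-term} against $\chi(h)\,d\zeta(h)$, sum, and recognize the sum as \eqref{eq:div-ub-first-time} tested against $\phi=\psi\,\chi(H)$; a density/approximation argument then extends this from product test functions to all $\phi\in C_c^\infty(B)$.

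The main obstacle I anticipate is precisely this last point: justifying that it suffices to test against the product functions $\psi(x)\chi(H(x))$, and, relatedly, making sense of the ``slicewise'' distributional equations \eqref{div-on-level-set}–\eqref{eq:div-on--level-set-bad-term} as genuine identities of measures on the (merely rectifiable) level sets $E_h$. One must ensure that the measures $u\,c_h b\,\H^1\rest E_h$, $u\,b\,\sigma_h$, $u\,b\,\kappa_h$ and $\mu_h$, $\nu_h$ are all well-defined Radon measures with the right integrability in $h$ (this uses the Borel-measurability of the families $\sigma_h,\kappa_h$ from Lemma~\ref{disint-in-ball} and of $\mu_h,\nu_h$ from the Disintegration Theorem), and that the exceptional $h$-null sets coming from Theorem~\ref{thm:ABC2_structure}, Remark~\ref{rem:h1-perp-sigma} and Remark~\ref{orientation-by-b} can be unified. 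The approximation of a general $\phi\in C_c^\infty(B)$ by finite sums $\sum_j \psi_j(x)\chi_j(H(x))$ in a topology strong enough to pass to the limit in both $\int u\,b\cdot\nabla\phi$ and $\int\phi\,d\mu$ is the delicate technical step; I expect it to follow from a Stone–Weierstrass-type argument in the variables $(x, H(x))$ combined with uniform bounds on $b$, $u$ and $\nabla H$, but the bookkeeping of the singular parts $\sigma_h,\kappa_h$ is where care is needed.
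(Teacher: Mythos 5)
Your overall strategy coincides with the paper's: test \eqref{eq:div-ub-first-time} against $\phi=\psi\,(\chi\circ H)$, use the fact that $b\cdot\nabla H=0$ $\L^{2}$-a.e.\ (note: you wrote $b\cdot\nabla^{\perp}H=0$, but the identity that kills the extra term is $b\cdot\nabla H=b\cdot(r_{B}b)^{\perp}=0$, which holds wherever $\nabla^{\perp}H=r_{B}b$, with no separate check on $\{r_{B}=0\}$ needed), insert the disintegration \eqref{disint-lebesgue} of $\L^{2}\rest B$ and that of $\mu$, and separate the resulting identities by mutual singularity. There is, however, one genuine gap: the first bullet of the lemma --- that the disintegration of $\mu$ has the form \eqref{H-disint-mu} --- is itself part of the conclusion, and your argument assumes it rather than proves it. A priori $H_{\#}|\mu|$ may contain a component $\lambda^{s}$ singular with respect to \emph{both} $\L^{1}$ and $\zeta$, so the correct starting point is $\mu=\int\mu_{h}\,dh+\int\nu_{h}\,d\zeta(h)+\int\lambda_{h}\,d\lambda^{s}(h)$ with $\lambda^{s}\perp(\L^{1}+\zeta)$. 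After testing against products and splitting by the mutual singularity of $\L^{1}$, $\zeta$ and $\lambda^{s}$, the left-hand side (which by \eqref{disint-lebesgue} only has $dh$- and $d\zeta$-components) contributes nothing to the $\lambda^{s}$-part, so one gets $\int\!\int\varphi\,d\lambda_{h}\,d\lambda^{s}(h)=0$ for all $\varphi$, hence $\lambda_{h}=0$ for $\lambda^{s}$-a.e.\ $h$; only then does \eqref{H-disint-mu} follow. Your sentence asserting that ``one gets exactly the decomposition \eqref{H-disint-mu}'' skips precisely this step.

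On the other hand, the difficulty you single out as the main obstacle --- approximating a general $\phi\in C_{c}^{\infty}(B)$ by sums of products $\psi_{j}(x)\chi_{j}(H(x))$ --- is not actually needed. In the forward direction you only \emph{restrict} the class of test functions, so no density argument is required; the genuine (and minor) technical point there is the quantifier exchange from ``for every $\varphi$, for a.e.\ $h$'' to ``for a.e.\ $h$, for every $\varphi$'', which is handled by a countable dense family of test functions $\varphi$. In the converse direction, \eqref{div-on-level-set} and \eqref{eq:div-on--level-set-bad-term} are distributional identities on $B$ valid against \emph{every} $\phi\in C_{c}^{\infty}(B)$, so one simply integrates them in $dh$ and $d\zeta(h)$ respectively, sums, and uses \eqref{disint-lebesgue} together with \eqref{H-disint-mu} to reassemble \eqref{eq:div-ub-first-time}; no Stone--Weierstrass argument is involved. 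Finally, since $\psi\,(\chi\circ H)$ is only Lipschitz, you should record (as the paper does) that compactly supported Lipschitz test functions are admissible in \eqref{eq:div-ub-first-time} by a standard mollification.
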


\begin{proof} Let $\lambda^{s}$ be a measure on $\R$ such that $H_{\#} \vert \mu \vert \ll \L^{1} + \zeta + \lambda^{s}$, where $\zeta$ is defined as in Lemma~\ref{disint-in-ball} and $\lambda^{s} \perp \L^{1} + \zeta$. Applying the Disintegration Theorem, we have that 
\begin{equation}\label{eq:disint-mu-general}
\mu = \int \mu_{h}dh + \int \nu_{h} d\zeta(h) + \int \lambda_{h} d\lambda^{s}(h),
\end{equation}
with $\mu_h, \nu_h, \lambda_h$ concentrated on $\{H = h\}$.
Writing equation \eqref{eq:div-ub-first-time} in distribution form we get 
\begin{equation*}
 \int_{\R^2} u (b \cdot \nabla \phi) \, dx + \int \phi d\mu = 0, \qquad \forall \phi \in C_{c}^{\infty}(B).
\end{equation*}
By an elementary approximation argument, it is clear that we can use as test functions $\phi$ Lipschitz with compact support.

Using the disintegration of Lebesgue measure \eqref{disint-lebesgue} and the disintegration \eqref{eq:disint-mu-general} we thus obtain
\begin{equation}
\label{E_weak_disint_lev}
\begin{split}
\int \left[ \int_{\R^2} u c_{h} (b \cdot \nabla \phi) \, d\H^{1} \rest E_{h} + \int_{\R^2} u (b \cdot \nabla \phi) \, d\sigma_{h} \right] dh & \\
+ \int \int_{\R^2} u (b \cdot \nabla \phi) \, d \kappa_{h} \, d\zeta(h) + \int \int_{\R^2} \phi \, d\mu_{h}\, dh & \\
+ \int \int_{\R^2} \phi \, d\nu_{h} \, d\zeta(h) + \int \int_{\R^2} \phi \, d\lambda_{h} \, d\lambda^{s}(h) &= 0,
\end{split}
\end{equation}
for every $\phi \in \mathrm{Lip}_{c}(B)$. In particular we can take
\begin{equation*}
\phi = \psi(H(x)) \varphi(x), \qquad \psi \in C^\infty(\R), \ \varphi \in C^\infty_c(B),
\end{equation*}
so that we can rewrite \eqref{E_weak_disint_lev} as
\begin{equation*}
\begin{split}
\int \psi(h) \left[ \int_{\R^2} u c_{h} (b \cdot \nabla \varphi) \, d\H^{1} \rest E_{h} + \int_{\R^2} u (b \cdot \nabla \varphi) \, d\sigma_{h} \right) dh & \\
+ \int \psi(h) \int_{\R^2} u (b \cdot \nabla \varphi) \, d \kappa_{h} \, d\zeta(h) + \int \psi(h) \int_{\R^2} \varphi \, d\mu_{h}\, dh & \\
+ \int \psi(h) \int_{\R^2} \varphi \, d\nu_{h} \, d\zeta(h) + \int \psi(h) \int_{\R^2} \varphi \, d\lambda_{h} \, d\lambda^{s}(h) &= 0,
\end{split}
\end{equation*}
because
\begin{equation*}
b \cdot \nabla \phi = \psi(H(x)) b \cdot \nabla \varphi(x)
\end{equation*}
for $\L^2$-a.e. $x \in \R^2$.

% Using now Lemma \ref{lemma:splitting-lemma} we obtain
Since the equalities above hold for all $\psi \in C^\infty (\R)$ we have
\begin{equation*}
\int \left[ \int_{\R^2} u c_{h} (b \cdot \nabla \varphi) \, d\H^{1} \rest E_{h} + \int_{\R^2} u (b \cdot \nabla \varphi)\, d\sigma_{h} \right] dh + \int \int_{\R^2} \varphi \, d\mu_{h}\, dh = 0,
\end{equation*}
\begin{equation*}
\int \left[ \int_{\R^2} u (b \cdot \nabla \varphi) \, d \kappa_{h}+\int_{\R^2} \varphi \, d\nu_{h} \right] \, d\zeta(h) = 0, 
\end{equation*}
\begin{equation*} %\label{eq:disint-mu-no-sing-term}
\int \int_{\R^2} \varphi \, d\lambda_{h} \, d\lambda^{s}(h) = 0, 
\end{equation*}
which give, respectively, \eqref{div-on-level-set}, \eqref{eq:div-on--level-set-bad-term} and \eqref{H-disint-mu}.

%\begin{equation}\label{eq:primo-pezzo}
%\begin{split}
%\int \left( \int_{\R^2} ubc_{h} \cdot \nabla \phi \, d\H^{1} \rest E_{h} + \int_{\R^2} ub \cdot \nabla \phi d\sigma_{h} \right) dh + \int \int_{\R^2} ub \cdot \nabla \phi \, d \kappa_{h} \, d\zeta(h) \\ 
%+ \int \int_{\R^2} \phi \, d\mu_{h}\, dh + \int \int_{\R^2} \phi \, d\nu_{h} \, d\zeta(h) = 0
%\end{split}
%\end{equation} 
%and 
%for every $\phi \in C_{c}^{\infty}(B)$. In particular, \eqref{eq:disint-mu-no-sing-term} implies that $\lambda_{h}=0$ for $\lambda^{s}$-a.e. $h$ and, therefore, the disintegration of $\mu$ has the desired form \eqref{H-disint-mu}. On the other hand, applying again Lemma \ref{lemma:splitting-lemma} on \eqref{eq:primo-pezzo}, we deduce that for every test function $\phi \in C_{c}^{\infty}(B)$,
%\begin{equation*}
%\int \left( \int_{\R^2} ubc_{h} \cdot \nabla \phi \, d\H^{1} \rest E_{h} + \int_{\R^2} ub \cdot \nabla \phi d\sigma_{h} \right) dh + \int \int_{\R^2} \phi \, d\mu_{h}\, dh = 0
%\end{equation*}
%and 
%which give, respectively, \eqref{div-on-level-set} and \eqref{eq:div-on--level-set-bad-term}. The other implication can be easily obtained by summing the equations \eqref{div-on-level-set} and \eqref{eq:div-on--level-set-bad-term}.
\end{proof}

\subsection{Reduction on connected components of level sets.}
If $K\subset \R^d$ is a compact then, in general, not any connected component $C$ of $K$ can be separated from $K\setminus C$ by a smooth function.
However, it can be separated by a sequence of such functions:
\begin{lemma}[{\cite[Section 2.8]{ABC2}}, {\cite[Lemma 5.3]{BG}}]\label{lemma:sep-by-test} If $K \subset \R^{d}$ is compact then for any connected component $C$ of $K$ there exists a sequence $(\phi_{n})_{n \in \N} \subset C_{c}^{\infty}(\R^{d})$ such that 
\begin{enumerate}
\item $0 \le \phi_{n} \le 1$ on $\R^{d}$ and $\phi_{n} \in \{0,1\}$ on $K$ for all $n \in \N$; 
\item for any $x \in C$, we have $\phi_{n}(x)=1$ for every $n \in \mathbb N$;
\item for any $x \in K \setminus C$, we have $\phi_{n}(x) \to 0$ as $n \to +\infty$; 
\item for any $n \in \N$, we have ${\rm supp}\, \nabla \phi_{n} \cap K = \emptyset$.  
\end{enumerate}
\end{lemma}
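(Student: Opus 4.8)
This statement is purely topological, combined with a routine mollification; the vector field $b$ and the rest of the paper play no role here. The plan is to first extract, out of the family of all relatively clopen subsets of $K$ that contain $C$, a \emph{decreasing} sequence $U_1\supseteq U_2\supseteq\cdots$ of sets clopen in $K$ with $C\subseteq U_n$ for every $n$ and $\bigcap_n U_n=C$, and then to obtain each $\phi_n$ by smoothing the indicator $\1_{U_n}$ at a spatial scale strictly smaller than $\mathrm{dist}(U_n,K\setminus U_n)$.

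For the extraction I would invoke the classical fact that in a compact Hausdorff space the connected component of a point coincides with its quasi-component, i.e.\ with the intersection of all relatively clopen sets containing it (this is where compactness is essential; it follows from normality of $K$ together with the finite intersection property). Hence for every $x\in K\setminus C$ there is a set $V$ clopen in $K$ with $C\subseteq V$ and $x\notin V$. Since $K$ is a compact metric space, $K\setminus C=\bigcup_{m\in\N}K_m$ with $K_m:=\{x\in K:\mathrm{dist}(x,C)\ge 1/m\}$ compact; covering each $K_m$ by finitely many of the open sets $K\setminus V$ and intersecting the corresponding sets $V$ yields a set $U^{(m)}$, clopen in $K$, with $C\subseteq U^{(m)}$ and $U^{(m)}\cap K_m=\emptyset$. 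Setting $U_n:=U^{(1)}\cap\cdots\cap U^{(n)}$ gives the desired decreasing sequence: it is clopen in $K$, contains $C$, and any point of $\bigcap_n U_n$ has distance $<1/m$ from $C$ for every $m$, hence lies in the closed set $C$.

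Now fix $n$. As $U_n$ is clopen in $K$, the compact sets $U_n$ and $K\setminus U_n$ are disjoint, so $\delta_n:=\mathrm{dist}(U_n,K\setminus U_n)>0$ (with the harmless convention $\delta_n=+\infty$ when $U_n=K$, handled by a bump that equals $1$ near $K$). Let $A_n$ be the closed $\tfrac{\delta_n}{2}$-neighbourhood of $U_n$ and put $\phi_n:=\1_{A_n}\ast\omega_{\delta_n/6}$, where $\omega_\e$ is a standard mollifier supported in $B(0,\e)$; then $\phi_n\in C^\infty_c(\R^d)$, $0\le\phi_n\le 1$, $\phi_n\equiv 1$ on the $\tfrac{\delta_n}{3}$-neighbourhood of $U_n$, $\phi_n\equiv 0$ outside the $\tfrac{2\delta_n}{3}$-neighbourhood of $U_n$, and hence $\{\nabla\phi_n\ne 0\}$ lies in the shell $S_n:=\{\tfrac{\delta_n}{3}\le\mathrm{dist}(\cdot,U_n)\le\tfrac{2\delta_n}{3}\}$. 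The four properties are then immediate: (1) every point of $K$ is either in $U_n$, where $\phi_n=1$, or in $K\setminus U_n$, at distance $\ge\delta_n$ from $U_n$, where $\phi_n=0$; (2) $C\subseteq U_n$; (3) given $x\in K\setminus C$, the $U_n$ decrease to $C$, so $x\notin U_n$ hence $\phi_n(x)=0$ for $n$ large; (4) by the triangle inequality every point of $S_n$ is at distance $\ge\tfrac{\delta_n}{3}$ from both $U_n$ and $K\setminus U_n$, so $S_n$, and therefore $\mathrm{supp}\,\nabla\phi_n\subseteq\overline{S_n}$, is disjoint from $K$.

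The only genuinely non-routine ingredient is the identification of the connected component with the quasi-component in the compact set $K$ (the passage from a \emph{single} separating clopen set per point to a countable decreasing family, and then from there to the smooth functions, is standard covering-and-mollification bookkeeping). I therefore expect that step to be the main point to write carefully, while the rest can be kept brief.
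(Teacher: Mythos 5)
Your argument is correct and complete. Note that the paper itself gives no proof of this lemma --- it is imported verbatim from \cite[Section 2.8]{ABC2} --- so there is no internal proof to compare against; your write-up is a valid self-contained substitute. The one point worth flagging is a mild difference from the cited source: there the relatively clopen sets $U_n \supseteq C$ with $U_n \downarrow C$ are produced constructively, by declaring two points of $K$ equivalent when they can be joined by a chain of points of $K$ with consecutive gaps at most $\delta_n$ (the $\delta$-chain classes are automatically clopen in $K$, finitely many by compactness, and decrease to the connected component as $\delta_n \downarrow 0$), whereas you obtain them by invoking the theorem that components coincide with quasi-components in a compact Hausdorff space and then running a finite-subcover argument on the compact sets $K_m=\{x\in K:\mathrm{dist}(x,C)\ge 1/m\}$. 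The two routes are interchangeable; yours outsources the only non-trivial topology to a standard theorem, the chain construction proves that theorem ad hoc in the metric setting. Your mollification step, including the verification via the triangle inequality that the shell $\{\delta_n/3\le\mathrm{dist}(\cdot,U_n)\le 2\delta_n/3\}$ containing $\mathrm{supp}\,\nabla\phi_n$ misses both $U_n$ and $K\setminus U_n$ and hence all of $K$, is exactly what is needed for property (4), and properties (1)--(3) follow as you say.
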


With the aid of this lemma we can now study the equation \eqref{div-on-level-set} on the nontrivial connected components of the level sets. 
In view of Lemma~\ref{disint-div-ub} in what follows we always assume that $h\notin N$ (see \eqref{Ndef}).

\begin{lemma}\label{cc-lemma}
The equation \eqref{div-on-level-set} holds iff
\begin{itemize}
\item for any nontrivial connected component $C$ of $E_h$ it holds 
\begin{equation}\label{div-on-cc}
\dive \left(u c_h b \H^1 \rest C \right) + \dive (u b \sigma_h \rest C) = \mu_h\rest C;
\end{equation}
\item it holds 
\begin{equation}\label{div-on-cc-parte-brutta}
\dive(u b \sigma_h \rest (E_h \setminus E_h^{\star})) = \mu_h \rest (E_h \setminus E_h^{\star}).
\end{equation}
\end{itemize}
\end{lemma}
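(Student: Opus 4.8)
The plan is to prove the two implications separately, the non-trivial one being the passage from \eqref{div-on-level-set} to \eqref{div-on-cc}--\eqref{div-on-cc-parte-brutta}. Throughout we may test against $\phi \in \mathrm{Lip}_c(B)$ (as in the proof of Lemma~\ref{disint-div-ub}), and we use that for $h \notin N$ the structure results of Theorem~\ref{thm:ABC2_structure} hold for $E_h$: in particular $\Conn^\star(E_h)$ is at most countable, its elements are pairwise disjoint compact sets whose union is $E_h^\star$, and $\H^1(E_h \setminus E_h^\star) = 0$. We also recall from Lemma~\ref{disint-in-ball} that $c_h \in L^1(\H^1 \rest E_h^\star)$, that $\sigma_h$ and $\mu_h$ are finite, and that $\sigma_h$ is concentrated on $E_h^\star$; hence $\sigma_h \rest (E_h \setminus E_h^\star) = 0$, the left-hand side of \eqref{div-on-cc-parte-brutta} is the zero distribution, and \eqref{div-on-cc-parte-brutta} is simply the assertion $\mu_h \rest (E_h \setminus E_h^\star) = 0$. (If $E_h$ fails to be compact one first replaces $H_B$ by a compactly supported Lipschitz extension; this only creates components of $E_h$ outside $B$, which are invisible to test functions supported in $B$.)

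For the forward implication, fix a nontrivial connected component $C$ of $E_h$ and let $(\phi_n) \subset C_c^\infty(\R^2)$ be the functions given by Lemma~\ref{lemma:sep-by-test} with $K = E_h$ and this $C$. Given $\phi \in \mathrm{Lip}_c(B)$, plug the admissible test function $\phi \phi_n$ into the weak form of \eqref{div-on-level-set}. Writing $\nabla(\phi\phi_n) = \phi_n \nabla \phi + \phi \nabla \phi_n$ and using that $\mathrm{supp}\,\nabla\phi_n \cap E_h = \emptyset$ (property~(4) of Lemma~\ref{lemma:sep-by-test}), the term $\phi\nabla\phi_n$ contributes nothing to the integrals against $\H^1 \rest E_h$ and $\sigma_h$, which are concentrated on $E_h$; only $\phi_n\nabla\phi$ remains there. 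Letting $n \to \infty$: on $E_h$ one has $\phi_n \in \{0,1\}$, $\phi_n \equiv 1$ on $C$ and $\phi_n \to 0$ on $E_h \setminus C$, so $\phi_n \to \1_C$ pointwise on $E_h$; by dominated convergence (using $c_h \in L^1(\H^1 \rest E_h)$ and finiteness of $\sigma_h$, $\mu_h$) the identity passes to the limit and becomes precisely the weak form of \eqref{div-on-cc}. This gives \eqref{div-on-cc} for every nontrivial $C$.

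To obtain \eqref{div-on-cc-parte-brutta}, sum the weak forms of \eqref{div-on-cc} over $C \in \Conn^\star(E_h)$: since the components are disjoint with union $E_h^\star$ and the integrands are absolutely summable, one may interchange sum and integral and obtain the weak form of $\dive(u c_h b\, \H^1 \rest E_h^\star) + \dive(u b\, \sigma_h) = \mu_h \rest E_h^\star$. Subtracting this from \eqref{div-on-level-set} and using $\H^1 \rest E_h^\star = \H^1 \rest E_h$ together with the concentration of $\sigma_h$ on $E_h^\star$ leaves $\int \phi\, d(\mu_h \rest (E_h \setminus E_h^\star)) = 0$ for all test $\phi$, i.e. $\mu_h \rest (E_h \setminus E_h^\star) = 0$, which is \eqref{div-on-cc-parte-brutta}. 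The converse implication runs the same computation backwards: \eqref{div-on-cc-parte-brutta} gives $\mu_h \rest E_h^\star = \mu_h$, and summing \eqref{div-on-cc} over $C$ as above produces exactly the weak form of \eqref{div-on-level-set}.

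The only genuinely delicate step is the limit in the forward implication: one must check that the gradient term $\phi\nabla\phi_n$ really does not charge the singular measures $\H^1\rest E_h$ and $\sigma_h$ --- this is exactly where property~(4) of Lemma~\ref{lemma:sep-by-test} and the concentration of these measures on $E_h$ are used --- and that the surviving integrands are dominated uniformly in $n$ so that $\phi_n \to \1_C$ can be passed through. Everything else is bookkeeping with countable additivity, which hinges on the countability of $\Conn^\star(E_h)$ and $\H^1(E_h \setminus E_h^\star) = 0$ from Theorem~\ref{thm:ABC2_structure}.
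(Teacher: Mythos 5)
Your proof is correct and follows essentially the same route as the paper's: test against products with the separating functions of Lemma~\ref{lemma:sep-by-test}, kill the $\nabla\phi_n$ terms via property~(4) since $\H^1\rest E_h$, $\sigma_h$ and $\mu_h$ are concentrated on $E_h$, pass to the limit $\phi_n\to\1_C$, then sum over the countably many components of $\Conn^\star(E_h)$ and use $\H^1(E_h\setminus E_h^\star)=0$ to isolate \eqref{div-on-cc-parte-brutta}. Your additional observations (that $\sigma_h\rest(E_h\setminus E_h^\star)=0$ by Lemma~\ref{disint-in-ball}, and the compactness caveat for applying Lemma~\ref{lemma:sep-by-test}) are correct refinements of the same argument.
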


\begin{proof} For any Borel set $A \subset \R^2$ we introduce the following functional
\begin{equation*}
\Lambda_{A}(\psi) := \int_{A} u c_{h} (b \cdot \nabla \psi) \, d\H^{1}\rest E_{h} + \int_{A} u (b \cdot \nabla \psi) \, d\sigma_{h} + \int_{A} \psi \, d\mu_{h},
\end{equation*}
for all $\psi \in C_{c}^{\infty}(B)$.

Now fix a connected component $C$ of $E_{h}$ and take a sequence of functions $(\phi_{n})_{n \in \mathbb N}$ given by Lemma \ref{lemma:sep-by-test} (applied with $K:=E_{h}$). By assumption, we have that $\Lambda(\psi \phi_{n})=0$ for every $\psi \in C_{c}^{\infty}(B)$ and for every $n$. 
Let us pass to the limit as $n\to \infty$.

On one hand we have 
\begin{equation*} 
\int \psi \phi_{n} \,d\mu_{h} = \int_{C} \psi \, d\mu + \int_{E_{h} \setminus C} \psi \phi_{n} \, d\mu \to \int_{C} \psi \, d\mu
\end{equation*}
because the second term converges to $0$ since $\phi_n\to 0$ pointwise on $E_h\setminus C$.
% by Lebesgue Dominated Convergence Theorem.

On the other hand $\nabla(\psi \phi_{n}) = \psi \nabla \phi_{n} + \phi_{n} \nabla\psi$. 
In the terms with $\phi_{n} \nabla \psi$ we pass to the limit as above. The terms with the product $\psi \nabla \phi_{n}$ identically vanish thanks to the condition (4) on $\phi_{n}$ in Lemma \ref{lemma:sep-by-test}. Therefore, we have that for every $\psi \in C_{c}^{\infty}(B)$
\begin{equation*}
\Lambda_{E_{h}}(\psi \phi_{n}) \to \int_{C} u c_{h} (b \cdot \nabla \psi) \, d\H^{1} + \int_{C} u (b \cdot \nabla \psi) \, d\sigma_{h} + \int_{C} \psi \, d\mu_{h} = \Lambda_{C}(\psi),
\end{equation*}
as $n \to +\infty$. Since $\Lambda_{E_{h}}(\psi \phi_{n}) = 0$ for every $n$, we deduce that $\Lambda_{C}(\psi)=0$ and this gives \eqref{div-on-cc}.

In order to get \eqref{div-on-cc-parte-brutta}, it is enough to observe that $E_{h}^{\star}$ is a countable union of connected components $C$, therefore (from the previous step) we deduce that
\begin{equation*}
\int_{E^{\star}_{h}} u c_{h} (b \cdot \nabla \psi) \, d\H^{1} + \int_{E^{\star}_{h}} u (b \cdot \nabla \psi)\, d\sigma_{h} + \int_{E^{\star}_{h}} \psi \, d\mu_{h} = 0, \qquad \forall \psi \in C_{c}^{\infty}(B).
\end{equation*}
Hence 
\begin{equation*}
\Lambda_{ E_{h} \setminus E^{\star}_{h}} := \int_{E^{\star}_{h} \setminus E_{h}} u c_{h} (b \cdot \nabla \psi) \, d\H^{1} + \int_{E^{\star}_{h}\setminus E_{h}} u (b \cdot \nabla \psi) \, d\sigma_{h} + \int_{E^{\star}_{h}\setminus E_{h}} \psi \, d\mu_{h} = 0, 
\end{equation*}
for every $\psi \in C_{c}^{\infty}(B)$. Remembering that $\H^{1}(E^{\star}_{h} \setminus E_{h}) = 0$ by Theorem \ref{thm:ABC2_structure} we get \eqref{div-on-cc-parte-brutta} and this concludes the proof.

The converse implication can be easily obtained by summing the equations \eqref{div-on-cc} and \eqref{div-on-cc-parte-brutta}.\end{proof}

\begin{lemma}\label{separation}
Equation \eqref{div-on-cc} holds iff
\begin{subequations}
\begin{equation}\label{div-on-level-set-reg-term}
\dive \left( u c_h b \H^1 \rest C \right) = \mu_h \rest C, 
\end{equation}
\begin{equation}\label{div-on-level-set-sing-term}
\dive (u b \sigma_h \rest C) = 0. 
\end{equation}
\end{subequations}
\end{lemma}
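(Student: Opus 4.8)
The implication ``\eqref{div-on-level-set-reg-term} and \eqref{div-on-level-set-sing-term} $\Rightarrow$ \eqref{div-on-cc}'' is immediate by adding the two equations. For the converse, once \eqref{div-on-cc} holds the equations \eqref{div-on-level-set-reg-term} and \eqref{div-on-level-set-sing-term} become \emph{equivalent to each other} (each is obtained by subtracting it from \eqref{div-on-cc}), so it will suffice to prove one of them, say \eqref{div-on-level-set-sing-term}, namely that the singular vector measure $u\,b\,\sigma_h\rest C$ is divergence-free in $B$.

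The plan is to turn \eqref{div-on-cc} into a one-dimensional equation on $C$. Since $h\notin N$, Theorem \ref{thm:ABC2_structure} guarantees that the nontrivial connected component $C$ of $E_h$ is a rectifiable simple closed curve; parametrize it by arc length, obtaining a $1$-Lipschitz homeomorphism $\gamma\colon\R/\ell\Z\to C$ with $\ell:=\H^1(C)$, which is length-preserving and hence maps $\L^1$-negligible sets to $\H^1$-negligible sets and conversely. Writing \eqref{div-on-cc} in weak form against $\psi\in\Lip_c(B)$ and using Remark \ref{orientation-by-b} ($r_B>0$ and $r_B b=\nabla^\perp H$ for $\H^1$-a.e.\ $x\in C$), together with $c_h=1/|\nabla H|$ and the fact that $\nabla^\perp H/|\nabla H|$ is the unit tangent to $E_h$, one gets (after orienting $\gamma$ suitably) $c_h b=(\dot\gamma\circ\gamma^{-1})/r_B$ for $\H^1$-a.e.\ $x\in C$, so that
\[
\int_C u\,c_h\,(b\cdot\nabla\psi)\,d\H^1=\int_{\R/\ell\Z} w\,\tfrac{d}{ds}(\psi\circ\gamma)\,ds,\qquad w:=\frac{u\circ\gamma}{r_B\circ\gamma}\in L^1(\R/\ell\Z),
\]
and \eqref{div-on-level-set-reg-term} is equivalent to $\tfrac{d}{ds}w=(\gamma^{-1})_\#(\mu_h\rest C)$ on $\R/\ell\Z$. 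By Remark \ref{rem:h1-perp-sigma} the measure $\sigma_h\rest C$ is carried by the $\H^1$-negligible set $C\cap\{\nabla H=0\}$, so $u\,b\,\sigma_h\rest C$ is a singular vector measure supported there; localizing \eqref{div-on-cc} away from $\{\nabla H=0\}$ (by a routine cut-off and inner-regularity argument, since $\sigma_h\rest C$ is concentrated there) removes the $\sigma_h$-term and yields \eqref{div-on-level-set-reg-term} on $C\setminus\{\nabla H=0\}$. Hence \eqref{div-on-level-set-reg-term}, equivalently \eqref{div-on-level-set-sing-term}, can fail only through a contribution of $\dive(u\,b\,\sigma_h\rest C)$ concentrated on $C\cap\{\nabla H=0\}$, and it remains to show that $\dive(u\,b\,\sigma_h\rest C)=0$.

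This last point is the heart of the proof, and the only place where the nearly incompressible setting genuinely departs from the divergence-free case of \cite{ABC1}: there $b=\nabla^\perp H$ vanishes on $\{\nabla H=0\}$, so $u\,b\,\sigma_h\rest C\equiv0$ and there is nothing to prove, whereas here, by Remark \ref{rem:conc_set}, $\sigma_h$ is concentrated on $\{b\ne0,\ r_B=0\}$ and $u\,b\,\sigma_h\rest C$ need not vanish. The plan is to adapt the argument of \cite[Theorem 8.2]{BG} to the curve $C$: decompose $b$ along $C$ into components tangent and normal to $\gamma$; the normal component, paired with the singular measure $\sigma_h\rest C$, would contribute to $\dive(u\,b\,\sigma_h\rest C)$ a second-order distribution in the direction transverse to $C$, which cannot be a measure — although $\dive(u\,b\,\sigma_h\rest C)=\mu_h\rest C-\dive(u\,c_h\,b\,\H^1\rest C)$ is a finite measure by the previous paragraph — unless it vanishes, so that $b$ must be $\gamma$-tangent $\sigma_h$-a.e.; the residual identity is then purely one-dimensional, pitting a term of the form $\tfrac{d}{ds}\!\bigl(\,\cdot\;(\gamma^{-1})_\#(\sigma_h\rest C)\bigr)$, singular with respect to $\L^1$, against the finite measures $\tfrac{d}{ds}w$ and $(\gamma^{-1})_\#(\mu_h\rest C)$, and comparing Lebesgue decompositions — the singular part being carried by the $\L^1$-negligible set $\gamma^{-1}(C\cap\{\nabla H=0\})$ — forces the singular term to be absent, which is exactly \eqref{div-on-level-set-sing-term}. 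The main obstacle is precisely this comparison, i.e.\ transporting the structure theory of \cite{BG} onto the curve $C$; it is for this that the rectifiability of $C$ (Theorem \ref{thm:ABC2_structure}) and Remarks \ref{rem:h1-perp-sigma}--\ref{rem:conc_set} were recorded beforehand.
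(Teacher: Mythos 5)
The easy direction and the observation that, once \eqref{div-on-cc} holds, the identities \eqref{div-on-level-set-reg-term} and \eqref{div-on-level-set-sing-term} are equivalent to each other are both fine; but your proof of the converse has a genuine gap, in two places. First, the step you dismiss as ``a routine cut-off and inner-regularity argument'' is in fact the entire content of the lemma, and a literal cut-off does not work: if you multiply the test function by functions vanishing on shrinking neighbourhoods of a compact carrier $K$ of (most of) $\sigma_h\rest C$, their gradients blow up, and these gradients enter the absolutely continuous term $\int_C u c_h (b\cdot\nabla\psi)\,d\H^1$, so you cannot pass to the limit there. The paper's proof instead replaces $\phi$ by $\phi_n=\phi\circ\chi^n$, where $\chi^n$ is a uniformly $1$-Lipschitz perturbation of the identity which is constant on each of finitely many small balls covering $K$; then $\nabla\phi_n=0$ on $K$ while $\Vert\nabla\phi_n\Vert_\infty\le\Vert\nabla\phi\Vert_\infty$, and the $\H^1$-term converges using the weak$^\star$ convergence of $\partial_s(\phi_n\circ\gamma)$ along an admissible parametrization (Lemma \ref{param-lemma}). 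Carried out this way, the argument gives $\int_C u c_h (b\cdot\nabla\phi)\,d\H^1+\int_C\phi\,d\mu_h=0$ for \emph{every} test function $\phi$, i.e.\ the full identity \eqref{div-on-level-set-reg-term}; no residual contribution ``concentrated on $C\cap\{\nabla H=0\}$'' survives, and your entire second part is unnecessary.

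Second, that part — which you yourself call the heart of the proof — is only a plan, and its key steps do not go through as stated. By Remark \ref{rem:h1-perp-sigma}, $\sigma_h\rest C$ is singular with respect to $\H^1\rest C$, hence carried by an $\H^1$-negligible subset of $C$; the (approximate) tangent and normal to $\gamma$ exist only $\H^1$-a.e.\ on $C$, so the decomposition of $b$ into tangential and normal components is simply not defined $\sigma_h$-a.e., and the reduction of the $\sigma_h$-term to a one-dimensional derivative against $(\gamma^{-1})_\#(\sigma_h\rest C)$ is unavailable. The claim that the normal contribution ``cannot be a measure unless it vanishes'' is precisely the delicate structural statement of \cite{BG} (Theorem 8.2 there), proved in the plane using the $\BV$ regularity of $b$, and you offer no argument for its analogue for a measure carried by an $\H^1$-null subset of a rectifiable curve. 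Finally, the premise that $\dive(u c_h b\,\H^1\rest C)$ is a finite measure does not follow from your previous paragraph — it is part of what has to be proved. (Your closing one-dimensional Lebesgue-decomposition step is sound in itself: if $\tfrac{d}{ds}(w\,\L^1+g\lambda)$ is a measure and $\lambda\perp\L^1$, then $w\,\L^1+g\lambda$ is a $\BV$ function and $g\lambda=0$; the problem is that you never legitimately arrive at that one-dimensional equation.)
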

The proof of Lemma \ref{separation} would be fairly easy in the case when $\gamma$ is a straight line.
Roughly saying, in this case \eqref{div-on-cc} would read as 
\begin{equation*}
\int u(x) c_h(x) b(x) \psi'(x) \, dx + \int u(x) c_h(x) b(x) \psi'(x) \, d\sigma_h(x) + \int \psi(x) \, d\mu(x)=0,
\end{equation*}
$\psi \in C_0^\infty(\R)$.
Since $\sigma_h$ is concentrated on a $\L^1$-negligible set $S$, any $\phi \in C_0^1$ 
can be approximated in $C^0$-norm with a sequence of $C^1$-functions $\phi_n$ having $0$-derivative on $S$.
Consequently, $\phi_n'$ converge to $\phi'$ weak* in $L^\infty$ as $n \to \infty$.
Then, substituting $\psi = \phi_n$ and passing to the limit as $n\to \infty$ we get
\begin{equation*}
\int u(x) c_h(x) b(x) \phi'(x) \, dx + \int \phi(x) \, d\mu(x)=0.
\end{equation*}
Hence the only technicality here is to repeat this argument on a curve.

Before presenting the formal proof of Lemma~\ref{separation} we would like to discuss the parametric version of the equation \eqref{div-on-level-set-reg-term}.

Let $\gamma\colon I\to \R^2$ be an injective Lipschitz parametrization of $C$, where $I=\nicefrac{\R}{\ell \Z}$ or $I=(0,\ell)$ for some $\ell>0$
is the domain of $\gamma$. In view of Remark~\ref{orientation-by-b}) we can assume that the directions of $b$ and $\nabla^\perp H$ agree $\H^1$-a.e. on $C$. 
So there exists a constant $\varpi\in \{+1,-1\}$ such that
\begin{equation}\label{admissible-param}
\frac{b(\gamma(s))}{\vert b(\gamma(s)) \vert} = \varpi \frac{\gamma^{\prime}(s)} {\vert \gamma^{\prime}(s)\vert}
\end{equation}
for a.e. $s\in I$.
We will say that $\gamma$ is an \emph{admissible parametrization} of $C$ if $\varpi=+1$.
In the rest of the text we will consider only admissible parametrizations of the connected components $C$.

\begin{lemma}\label{param-lemma}
Equation \eqref{div-on-level-set-reg-term} holds iff for any admissible parametrization $\gamma$ of $C$
\begin{equation}\label{eq:div-ub-in-param-in-balls}
\partial_s (\hat{u}\hat{c}_h \vert \hat{b} \vert) = \hat{\mu}_{h}
\end{equation}
where $\gamma_\# \hat{\mu}_{h} = \mu_{h} \rest C$, $\hat{u} = u \circ \gamma$, $\hat{c}_{h}=c_{h} \circ \gamma$ and $\hat{b}=b \circ \gamma$.
\end{lemma}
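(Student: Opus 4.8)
The plan is to translate the distributional equation \eqref{div-on-level-set-reg-term} on the one-dimensional rectifiable set $C$ into an equivalent ODE on the parameter interval $I$ via the injective Lipschitz parametrization $\gamma$. Since $\gamma$ is bi-Lipschitz onto its image (it is injective Lipschitz on a compact or periodic interval with nonvanishing derivative a.e.), the measures $\H^1\rest C$ and $\mu_h\rest C$ pull back cleanly: $\gamma_\#(\L^1\rest I)$ is comparable to $\H^1\rest C$, and in fact by the area formula $\int_C g\,d\H^1 = \int_I (g\circ\gamma)|\gamma'(s)|\,ds$ for Borel $g\ge 0$. The key computational identity is that for a test function $\phi\in C^\infty_c(B)$, the chain rule gives $\nabla\phi(\gamma(s))\cdot\gamma'(s) = \partial_s(\phi\circ\gamma)(s)$ for a.e. $s\in I$, so that, using \eqref{admissible-param} with $\varpi=+1$,
\begin{equation*}
b(\gamma(s))\cdot\nabla\phi(\gamma(s)) = |b(\gamma(s))|\,\frac{\gamma'(s)}{|\gamma'(s)|}\cdot\nabla\phi(\gamma(s)) = \frac{|b(\gamma(s))|}{|\gamma'(s)|}\,\partial_s(\phi\circ\gamma)(s).
\end{equation*}

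First I would rewrite the weak form of \eqref{div-on-level-set-reg-term}, namely $\int_C u c_h\, (b\cdot\nabla\phi)\,d\H^1 = -\int_C \phi\,d\mu_h$ for all $\phi\in C^\infty_c(B)$, and push everything forward to $I$ through $\gamma$. The left-hand side becomes, by the area formula and the displayed identity,
\begin{equation*}
\int_I \hat u(s)\,\hat c_h(s)\,\frac{|\hat b(s)|}{|\gamma'(s)|}\,\partial_s(\phi\circ\gamma)(s)\,|\gamma'(s)|\,ds = \int_I \hat u\,\hat c_h\,|\hat b|\;\partial_s(\phi\circ\gamma)\,ds,
\end{equation*}
and the right-hand side becomes $-\int_I (\phi\circ\gamma)\,d\hat\mu_h$ with $\hat\mu_h := (\gamma^{-1})_\#(\mu_h\rest C)$, which is a well-defined Radon measure on $I$ precisely because $\gamma$ is injective. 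The next point is to check that as $\phi$ ranges over $C^\infty_c(B)$, the restrictions $\phi\circ\gamma$ range over a set of functions dense enough in $C^\infty_c(I)$ (or $\mathrm{Lip}_c(I)$) to conclude that $\partial_s(\hat u\hat c_h|\hat b|) = \hat\mu_h$ in $\ss D'(I)$; this is where I would use that $C$ is a closed simple curve (item (3) of Theorem~\ref{thm:ABC2_structure}, as encoded in the definition of $E_h^\star$ and the admissible parametrization), so that $\gamma$ is a homeomorphism onto a compact subset of $B$ and every smooth compactly supported function on $I$ can be realized (up to uniform approximation of derivatives) as the pullback of a smooth function on $B$ — either by an extension/Whitney-type argument or, in the periodic case $I=\R/\ell\Z$, by composing with a tubular-neighborhood retraction. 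The converse direction is symmetric: given the ODE on $I$, one tests against $\phi\circ\gamma$ and reverses the computation.

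I expect the main obstacle to be the last density/surjectivity step: making precise that testing \eqref{div-on-level-set-reg-term} against all $\phi\in C^\infty_c(B)$ is equivalent to testing the pushed-forward equation against all test functions on $I$. One has to handle the two cases $I=(0,\ell)$ and $I=\R/\ell\Z$ separately (in the open-interval case the endpoints of $\gamma$ may lie on $\partial B$, so no boundary terms appear, while in the periodic case one must ensure periodic test functions are produced), and one must verify that the map $\phi\mapsto\phi\circ\gamma$ has image dense in the relevant topology — for which the simplicity and closedness of $C$, hence the bi-Lipschitz nature of $\gamma$, is essential. A minor additional technical point is justifying the use of Lipschitz (rather than merely smooth) test functions throughout, which was already remarked in the proof of Lemma~\ref{disint-div-ub} and carries over here by the same elementary approximation. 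Everything else — the area formula, the chain rule along $\gamma$, and the identity \eqref{admissible-param} normalizing the orientation — is routine once the correct pushforward measures $\hat\mu_h$, and the coefficient $\hat c_h|\hat b|$, are identified.
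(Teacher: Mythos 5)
Your computational skeleton coincides with the paper's: pull back $\H^1\rest C$ via the area formula, use the orientation identity \eqref{admissible-param} to convert $b\cdot\nabla\phi$ into $|\hat b|\,\partial_s(\phi\circ\gamma)/|\gamma'|$, and then argue that testing against pullbacks $\phi\circ\gamma$, $\phi\in C^\infty_c(B)$, is enough to get the one-dimensional distributional equation. The gap is in how you justify that last step. You assert that $\gamma$ is \emph{bi-Lipschitz} onto its image ``because it is injective Lipschitz with nonvanishing derivative a.e.'', and you build the density argument (Whitney extension of $\phi\circ\gamma^{-1}$, tubular-neighborhood retraction) on that premise. This is false in general: an injective Lipschitz curve, even parametrized by arc length, need not have a Lipschitz inverse --- a connected component of a level set of a Lipschitz Hamiltonian can pass arbitrarily close to itself (a long thin ``U'', a tight spiral), so two points close in $\R^2$ may be far apart in the parameter. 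Consequently $\phi\circ\gamma^{-1}$ need not be Lipschitz on $C$, there is no Whitney extension to invoke, and a merely Lipschitz simple curve has no tubular neighborhood or Lipschitz retraction. So the class $\{\phi\circ\gamma:\phi\in C^\infty_c(B)\}$ is genuinely smaller than what a na\"ive extension argument would give, and your proposed route to density breaks down at exactly the point you yourself flag as the main obstacle.

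The paper closes this gap by quoting Lemma~\ref{lemma:ABC2_density} (from \cite[Section 7]{ABC2}), which is tailored to this situation: it does not claim that pullbacks are dense in $\Lip_c(I)$ in any strong topology, but rather that for functionals of the specific form $\Lambda(\phi)=\int_I\phi' a\,dt+\int_I\phi\,d\mu$ with $a\in L^1$ and $\mu$ Radon, vanishing on all pullbacks forces vanishing on all of $\Lip_c(I)$. Its proof exploits injectivity and compactness (separation of the disjoint compact sets $\gamma([s_0-\delta,s_0+\delta])$ and $\gamma(I\setminus(s_0-\eps,s_0+\eps))$ by smooth functions, combined with a limiting argument that uses the $L^1+\text{Radon}$ structure of $\Lambda$), not bi-Lipschitz regularity. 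To repair your proof you should either invoke that lemma, as the paper does, or reproduce an argument of that type; the extension/retraction strategy cannot be salvaged without additional (and unavailable) regularity of $C$. The remainder of your write-up --- the area-formula computation, the treatment of $\hat\mu_h$ as an injective pushforward, and the remark about Lipschitz test functions --- matches the paper and is fine.
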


In the proof of Lemma~\ref{param-lemma} we will use the following result:
\begin{lemma}[{\cite[Section 7]{ABC2}}] \label{lemma:ABC2_density} Let $a \in L^{1}(I)$ and $\mu$ a Radon measure on $I$, where $I=\nicefrac{\R}{\ell \Z}$ or $I=(0,\ell)$ for some $\ell>0$. Suppose that $\gamma \colon I \to \Omega$ is an injective Lipschitz function such that $\gamma^{\prime} \ne 0$ a.e. on $I$ and $\gamma(0,\ell) \subset \Omega$. Consider the functional
\begin{equation*}
\Lambda(\phi) := \int_{I} \phi^{\prime}a\, dt + \int_{I} \phi \, d\mu, \qquad \forall \phi \in \Lip_{c}(I).
\end{equation*}
If $\Lambda(\varphi \circ \gamma) = 0$ for any $\varphi \in C_{c}^{\infty}(\Omega)$ then $\Lambda(\phi) = 0$ for any $\phi \in \Lip_{c}(I)$. 
\end{lemma}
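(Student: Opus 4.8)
The statement is, at bottom, a density result, and the plan is to prove it by an approximation argument. Call a sequence $\phi_n\in\Lip_c(I)$ \emph{weakly convergent} to $\phi$ if there is a single compact $K\subset\subset I$ with $\operatorname{supp}\phi_n\subset K$ for all $n$, $\phi_n\to\phi$ uniformly on $I$, and $\sup_n\|\phi_n'\|_{L^\infty(I)}<\infty$. Along any such sequence $\Lambda(\phi_n)\to\Lambda(\phi)$: indeed, the equi-bound on the derivatives together with the uniform convergence forces $\phi_n'\rightharpoonup^*\phi'$ in $L^\infty(I)=(L^1(I))^*$, because any weak-$^*$ cluster point $g$ of $(\phi_n')$ satisfies $\int_I g\psi\,dt=-\int_I\phi\psi'\,dt=\int_I\phi'\psi\,dt$ for all $\psi\in C^\infty_c(I)$ (integration by parts, $\phi_n$ and $\phi$ being Lipschitz), so $g=\phi'$; hence $\int_I\phi_n'a\,dt\to\int_I\phi'a\,dt$ since $a\in L^1(I)$, while $\int_I\phi_n\,d\mu=\int_K\phi_n\,d\mu\to\int_K\phi\,d\mu$ by dominated convergence against $|\mu|\rest K$, which is a finite measure. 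Since by hypothesis $\Lambda$ vanishes on every $\varphi\circ\gamma$ with $\varphi\in C^\infty_c(\Omega)$, and $\Lambda$ is weakly sequentially continuous, it is enough to show that each $\phi\in\Lip_c(I)$ is the weak limit of a sequence of such functions.

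A first, routine reduction lets us work with merely Lipschitz $\varphi$: for $\varphi\in\Lip_c(\Omega)$ the mollifications $\varphi*\rho_\delta$ belong to $C^\infty_c(\Omega)$ for $\delta$ small, converge to $\varphi$ uniformly with $\Lip(\varphi*\rho_\delta)\le\Lip(\varphi)$, so $(\varphi*\rho_\delta)\circ\gamma\to\varphi\circ\gamma$ weakly in $\Lip_c(I)$ (Lipschitz constants on $I$ bounded by $\Lip(\varphi)\Lip(\gamma)$, and supports confined to a fixed compact of $I$ since, $\gamma$ being injective and — reading the hypothesis $\gamma(0,\ell)\subset\Omega$ in the natural way — carrying $\partial I$ outside $\Omega$, the preimage under $\gamma$ of a fixed small closed neighbourhood of $\operatorname{supp}\varphi$ is a compact subset of $I$; for $I=\R/\ell\Z$ there is nothing to check). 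Hence it suffices to prove the claim: \emph{every $\phi\in\Lip_c(I)$ is the weak limit of a sequence $\varphi_n\circ\gamma$ with $\varphi_n\in\Lip_c(\Omega)$}, since composing with the mollification step then recovers the smooth functions.

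To prove this, fix $\phi\in\Lip_c(I)$, set $K_0:=\operatorname{supp}\phi\subset\subset I$ and fix a compact $I'$ with $K_0\subset I'\subset I$ ($I'$ proper when $I=(0,\ell)$). For $\e>0$, uniform continuity of $\phi$ and of $\gamma|_{I'}$ permits a partition of $I'$ into finitely many intervals $J_1,\dots,J_N$ with $\operatorname{osc}_{J_i}\phi<\e$ and $\operatorname{diam}\gamma(J_i)<\e$. One then builds $\varphi_\e\in\Lip_c(\Omega)$, supported in an $O(\e)$-neighbourhood of the compact set $\gamma(K_0)\subset\Omega$, which on each arc $\gamma(J_i)$ stays within $O(\e)$ of the (nearly constant) value $\phi$ takes on $J_i$ and which vanishes away from $\gamma(I')$; then automatically $\|\varphi_\e\circ\gamma-\phi\|_\infty=O(\e)$ and the supports of $\varphi_\e\circ\gamma$ remain in a fixed compact of $I$. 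The crux of the argument — and the main obstacle — is the uniform Lipschitz bound $\Lip_I(\varphi_\e\circ\gamma)\le\Lip(\phi)+O(\e)$.

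The difficulty is real: whenever two of the arcs $\gamma(J_i)$ (or one of them and a piece of $\gamma$ coming from parameters outside $I'$) pass within $\e$ of one another while $\phi$ takes different values on the corresponding parameters — that is, at near self-intersections of the curve — the function $\varphi_\e$ is forced to change by $O(1)$ across an $\e$-thin gap, so its ambient Lipschitz constant necessarily blows up like $\e^{-1}$; in particular $\phi\circ\gamma^{-1}$ need not admit \emph{any} Lipschitz extension to $\Omega$, so a naive extension–plus–mollification is impossible. What saves the estimate is that $(\varphi_\e\circ\gamma)'(s)=\nabla\varphi_\e(\gamma(s))\cdot\gamma'(s)$ pairs $\nabla\varphi_\e$ only with the tangent vector $\gamma'(s)$, whereas the large component of $\nabla\varphi_\e$ points transversally \emph{across} the gap; if $\varphi_\e$ is chosen with the right local geometry near each cluster of nearly-coincident arcs — so that its steep directions are, along the curve, almost orthogonal to $\gamma'$ — the composition $\varphi_\e\circ\gamma$ stays equi-Lipschitz. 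Carrying out this local construction, which is precisely where injectivity of $\gamma$ and $\gamma'\ne0$ a.e.\ enter, is the technical heart and is done in detail in \cite[Section~7]{ABC2}. Once it is available, $\varphi_\e\circ\gamma\to\phi$ weakly as $\e\to0$, and combined with the two reductions above this yields $\Lambda(\phi)=0$ for every $\phi\in\Lip_c(I)$, proving the lemma.
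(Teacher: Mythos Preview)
The paper does not give its own proof of this lemma: it is stated with the citation \cite[Section 7]{ABC2} and then used as a black box. So there is no ``paper's proof'' to compare against beyond that citation.

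Your sketch is a faithful outline of the argument in \cite{ABC2}: the reduction to a density statement via the weak-$*$ continuity of $\Lambda$ on equi-Lipschitz, equi-supported sequences is correct and cleanly stated, and you have correctly isolated the genuine obstacle --- near self-intersections of $\gamma$ force $\nabla\varphi_\e$ to blow up like $\e^{-1}$ --- together with the mechanism that rescues the construction, namely that the large component of $\nabla\varphi_\e$ is transversal to $\gamma'$ and so does not appear in $(\varphi_\e\circ\gamma)'$. Since you also defer the actual local construction to \cite[Section 7]{ABC2}, your write-up and the paper end at the same place; the difference is only that you have unpacked the strategy where the paper simply quotes the result. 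One small point: your reading of the hypothesis ``$\gamma(0,\ell)\subset\Omega$'' as implying that $\gamma$ sends the boundary of $I$ outside $\Omega$ (so that $\varphi\circ\gamma$ inherits compact support) is the intended one in the applications, but it is not literally what the stated hypothesis says; in \cite{ABC2} this is handled by the standing assumption that $\gamma$ extends continuously to $\overline I$ with $\gamma(\partial I)\subset\partial\Omega$.
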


\begin{proof}[Proof of Lemma~\ref{param-lemma}] Let us recall a corollary from Area formula: if $\gamma \colon I \to \R^2$ is an injective Lipschitz parametrization of $C$ then
\begin{equation*} %\label{eq:area-formula-as-we-use}
\H^{1} \rest C = \gamma_{\#}\left(\vert \gamma^{\prime} \vert \L^{1}\right).
\end{equation*}
Using this formula the distributional version of \eqref{div-on-level-set-reg-term},
\begin{equation*} %\label{eq:div-on-cc-quasi-in-param}
\int_{C} u c_h b \cdot \nabla \phi d \H^1 \rest C +\int_{C} \phi d\mu_h = 0, \quad \forall \phi \in C_{c}^{\infty}(B),
\end{equation*}
can be written as %We can rewrite \eqref{eq:div-on-cc-quasi-in-param} as 
\begin{equation*}
\int_{I} u(\gamma(s)) c_{h}(\gamma(s)) b(\gamma(s)) \cdot (\nabla \phi)(\gamma(s)) \vert \gamma^{\prime}(s) \vert \, ds + \int_{I} \phi(\gamma(s)) d\hat{\mu}_{h}(s) = 0 
\end{equation*}
where $\hat{\mu}_{h}$ is defined by $\hat{\mu}_{h} := \left( \gamma^{-1}\right)_{\#} \mu_{h}$.

Using \eqref{admissible-param} we can write the equation above as
\begin{equation*}
\int_{I} u(\gamma(s)) c_{h}(\gamma(s)) \gamma^{\prime}(s) (\nabla \phi)(\gamma(s)) \vert b(\gamma(s)) \vert \, ds + \int_{I} \phi(\gamma(s)) d\hat{\mu}_{h}(s) = 0,
\end{equation*}
which reads as 
\begin{equation*}
\int_{I} \hat{u}(s) \hat{c_{h}}(s) \partial_{s}\phi(\gamma(s)) \vert \hat{b}(s) \vert \, ds + \int_{I} \phi(\gamma(s)) d\hat{\mu}_{h}(s) = 0.
\end{equation*}
Since the equation above holds for any $\phi \in C_{c}^{\infty}(B)$ it remains to apply Lemma~\ref{lemma:ABC2_density}.
\end{proof}

\begin{proof}[Proof of Lemma~\ref{separation}]
Let us write $\Lambda(\phi) = M(\phi) + N(\phi)$, where 
\begin{equation*}
M(\phi) := \int_{C} u c_{h} (b\cdot \nabla \phi) \, d \H^{1} + \int_{C} \phi \, d\mu_{h}
\end{equation*}
and
\begin{equation*}
N(\phi) := \int_{C}ub \cdot \nabla \phi d\sigma_{h}
\end{equation*}
for every $\phi \in C_{c}^{\infty}(B)$.

Fix a test function $\phi$: we are going to ``perturb'' $\phi$ in such a way that $N(\phi)$ becomes arbitrarily small and $M(\phi)$ remains almost unchanged. Since $\Lambda(\phi)=0$ we will obtain that $\vert M(\phi) \vert < \e$ and this will imply that $M(\phi)=N(\phi)=0$.

By Lemma~\ref{disint-in-ball}, we have $\sigma_h \perp \H^1 \rest C$ therefore there exists a $\H^1$-negligible set $S\subset C$ such that $\sigma_h$ is concentrated on $S$. Moreover, by inner regularity, for every $n \in \N$, we can find a compact $K\subset S$ such that
\begin{equation*}
\sigma_h(S \setminus K) < \frac{1}{n}.
\end{equation*}
Using the fact that $\H^1(K)=0$, for every $n \in\N$, we can find countably many open balls $\{B_{r_j}(z_j)\}_{j \in \N}$ which cover $K$ and whose radii $r_{j}$ satisfy 
\begin{equation*}
\sum_{j\in \N} r_j < \frac{1}{n}. 
\end{equation*}
Furthermore, by compactness, we can extract from $\{B_{r_j}(z_j)\}_{l\in \N}$ a finite subcovering, $\{B_{r_j}(z_j)\}$ with $j=1,\ldots , \nu$ where $\nu = \nu(n) \in \N$ (we stress that $\nu$ depends on $n$).

For every $j \in \{1, \ldots, \nu \}$, let 
\begin{equation*}
P_{i}^{j,n} := (z_{j,i} - r_j, z_{j,i} + r_j)
\end{equation*}
denote the projection of $B_{r_{j}}(z_{j})$ onto the $x_{i}$-axis, with $i=1,2$. Since $P_{i}^{j,n}$ is an open interval we can find a smooth function $\psi_{i}^{j,n} \colon \R \to \R$ such that 
\begin{equation*}
\psi_{i}^{j,n}(\xi) =
\begin{cases}
0 & \xi \in P_{i}^{j,n}, \\
1 & {\rm dist} (\xi, \partial P_{i}^{j,n})>2r_{i},
\end{cases} 
\end{equation*}
and $0\le \psi_{i}^{j,n} \le 1$ for every $\xi \in \R$. Now we consider the product $\psi^n_i:=\psi^{1,n}_{i}\psi^{2,n}_{i} \cdots \psi^{\nu,n}_{i}$ and we define the functions $\chi^n_i \colon \R \to \R$ as
\begin{equation*}
\chi^n_i(\xi) := \int_{0}^{\xi} \psi^n_i(w) \, dw 
\end{equation*}
for $i=1,2$ and $n \in \N$. Now we set $\chi^{n}(x):= (\chi^n_1(x), \chi^n_2(x))$ and $\phi_{n}:=\phi \circ \chi^{n}$. Since $\Vert \chi^{n} - \text{id} \Vert_{\infty} \le 4 \sum_{i} r_{i} \le \frac{4}{n}$ we deduce that $\phi_{n} \to \phi$ uniformly in $C$ because 
\begin{equation*}
\vert \phi_{n}(x) - \phi(x) \vert \le \Vert \nabla \phi \Vert_{\infty} \Vert \chi^{n} - \text{id} \Vert_{\infty} \to 0
\end{equation*}
as $n \to +\infty$.

Let us now take an admissible parametrization of $C$, $\gamma \colon I \to \R$, and let us  introduce the functions $\hat{\phi}_{n} := \phi_{n} \circ \gamma$. Using for instance the density of $C^{1}$ functions in $L^{1}(I)$, we can actually show that $\partial_{s} \hat{\phi}_{n} \rightharpoonup^{\star} \partial_{s}\hat{\phi}$ in weak$^\star$ topology of $L^{\infty}$. Passing to the parametrization as in the proof of Lemma~\ref{param-lemma} we get
\begin{equation*}
 \int_{C} u c_{h} (b \cdot \nabla \phi_{n}) \, d \H^{1} = \int_{I} \hat{u} \hat{c}_{h} \hat{b} \, \partial_{s} \hat{\phi}_{n} \, ds,
\end{equation*}
where we denote by $\hat \cdot$ the composition with $\gamma$.

Using weak$^\star$ convergence, we obtain that 
\begin{equation*}
\int_{C} u c_{h} (b \cdot \nabla \phi_{n}) \, d \H^{1}  \to \int_{C} u c_{h} (b\cdot \nabla \phi) \, d \H^{1}.
\end{equation*}
On the other hand, by uniform convergence, we immediately get 
\begin{equation*}
 \int \phi_{n} \, d\mu_{h} \to \int \phi \, d\mu_{h}, 
\end{equation*}
as $n \to+\infty$. In particular, we have that $M(\phi_{n}) \to M(\phi)$.

Now observe that $\nabla \phi_{n} = 0$ on $K$ by construction, hence we get   
\begin{equation*}
 N(\phi_{n}) \le \int_{S \setminus K} \vert u b \vert \vert \nabla \phi_{n}\vert d\sigma_{h} \le \Vert ub\Vert_{\infty} \Vert \nabla \phi \Vert_{\infty} \frac{1}{n} \to 0
\end{equation*}
and this implies that $N(\phi)=0$. Therefore, $0=\Lambda(\phi)=M(\phi)$, which concludes the proof.
\end{proof}

We note, in particular, that from \eqref{div-on-level-set-sing-term}, being $b \in \BV$ and taking $u\equiv 1$ in \eqref{eq:div-ub-first-time}, we have that $\dive(b \sigma_{h} \rest E_{h}) = 0$ for a.e. $h$. 

Let
\begin{equation}
\label{E_def_F}
F:=\left\{b \ne 0, r_{B} = 0\right\} \cap E.
\end{equation}
By Point (5) of Lemma \ref{disint-in-ball}, $\sigma_{h}$ is concentrated on $F \cap E_{h}$ hence we have
\begin{equation}\label{sing_part}
\dive (\1_{F}b \sigma_{h}) = 0, \qquad \text{ for } \L^{1} \text{-a.e. } h.
\end{equation}
This important piece of information is very useful to prove the following 

\begin{lemma}
\label{lemma:divrf}
We have $\dive (\1_F b) = 0$ in $\ss D^{\prime}(B)$.
\end{lemma}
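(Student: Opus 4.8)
The plan is to test the desired identity against an arbitrary $\phi\in C_c^\infty(B)$ and to expand $\int_B \1_F\, b\cdot\nabla\phi\, d\L^2$ using the disintegration of $\L^2\rest B$ with respect to $H$ provided by Lemma~\ref{disint-in-ball}. By \eqref{disint-lebesgue},
\begin{equation*}
\int_B \1_F\, b\cdot\nabla\phi\, d\L^2 = \int\left[\int_F c_h\, b\cdot\nabla\phi\, d\H^1\rest E_h + \int_F b\cdot\nabla\phi\, d\sigma_h\right]dh + \int\int_F b\cdot\nabla\phi\, d\kappa_h\, d\zeta(h),
\end{equation*}
and I would show that each of the three contributions vanishes.

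For the absolutely continuous part $c_h\H^1\rest E_h$: by Remark~\ref{orientation-by-b} we may assume that for every $h\notin N$ one has $r_B>0$ for $\H^1$-a.e.\ $x\in E_h^\star$, and since $\H^1(E_h\setminus E_h^\star)=0$ by Theorem~\ref{thm:ABC2_structure} this yields $\H^1(E_h\cap\{r_B=0\})=0$, hence $\H^1(E_h\cap F)=0$ because $F\subset\{r_B=0\}$; as $N$ is $\L^1$-negligible, this term vanishes for $\L^1$-a.e.\ $h$. For the term carried by $\kappa_h$: by Point~\eqref{Point_4_lemma_dis_bal} of Lemma~\ref{disint-in-ball} the measure $\zeta$ is concentrated on $N$, while $\kappa_h$ is concentrated on $E_h^\star$ for $\zeta$-a.e.\ $h$; since distinct level sets of $H$ are disjoint and $E=\bigcup_{h'\notin N}E_{h'}^\star$, for $h\in N$ the set $E_h^\star$ does not meet $E$, and in particular $\kappa_h(F)=0$ for $\zeta$-a.e.\ $h$ because $F\subset E$, so this term vanishes as well. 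Finally, $\sigma_h$ is concentrated on $F\cap E_h$ by Remark~\ref{rem:conc_set}, hence $\int_F b\cdot\nabla\phi\, d\sigma_h=\int b\cdot\nabla\phi\, d\sigma_h$, and this is $0$ for $\L^1$-a.e.\ $h$ by \eqref{sing_part} (which says precisely $\dive(\1_F b\sigma_h)=0$). Adding the three vanishing statements gives $\int_B \1_F\, b\cdot\nabla\phi\, d\L^2=0$ for every $\phi\in C_c^\infty(B)$, i.e.\ $\dive(\1_F b)=0$ in $\ss D'(B)$.

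Once \eqref{sing_part} is available the argument is essentially bookkeeping about which of the three pieces of the disintegration of $\L^2\rest B$ charge the set $F$, so I do not expect a genuine obstacle. The only delicate point is the $c_h\H^1\rest E_h$ contribution: the identity $\nabla^\perp H=r_B b$ only holds $\L^2$-a.e., so by itself it does not prevent $F$ from meeting some level set $E_h$ in a set of positive $\H^1$ measure; ruling this out is exactly what the $\H^1\rest E_h^\star$-a.e.\ statement of Remark~\ref{orientation-by-b} provides, and it is this upgrade from an $\L^2$-a.e.\ to an $\H^1$-a.e.\ statement that makes the first term disappear.
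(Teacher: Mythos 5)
Your proof is correct and follows essentially the same route as the paper: both reduce the claim to the identity \eqref{sing_part} by observing that, in the disintegration \eqref{disint-lebesgue} of $\L^2\rest B$, the set $F$ is charged only by the measures $\sigma_h$. The paper states this reduction in one line, whereas you justify why the $c_h\H^1\rest E_h$ and $\kappa_h$ pieces do not see $F$ (via Remark~\ref{orientation-by-b} and the concentration of $\zeta$ on $N$); these are exactly the details the paper leaves implicit, so there is no substantive difference.
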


\begin{proof} For every test function $\phi\in C_{c}^{\infty}(B)$, we have
\begin{equation*}
\int_{F} (b(x) \nabla \phi(x)) \, dx = \int\int_{F \cap E_{h}} (b(x) \cdot \nabla \phi(x)) \, d\sigma_{h}(x)dh.
\end{equation*}
Using again Point (5) of Lemma \ref{disint-in-ball} and \eqref{sing_part}, we get that 
\begin{equation*}
\int_{F \cap E_{h}} (b(x) \nabla \phi(x)) \, d\sigma_{h}(x) = 0
\end{equation*}
and then we conclude.
\end{proof}

Finally, let us mention a covering property of the set $E^\star$:

\begin{lemma}\label{l-E-covers-nonzero-b}
Let $E^\star$ be the set defined in \eqref{Ndef}. Then 
\begin{equation*}
E^\star \supset \{\nabla H \ne 0\} \mod \L^2.
\end{equation*}
\end{lemma}

\begin{proof}
Suppose that $P:= \{\nabla H \ne 0\} \setminus E$ has positive measure. Then
\begin{equation*}
0< \int_P |\nabla H| \, dx = \int \int \1_P \,d\H^1 \rest E_h \, dh = 0
\end{equation*}
where the first equality is due to Coarea formula (Lemma~\ref{lemma-coarea})
and the second equality holds since $\1_P$ is zero on $E_h$ for a.e. $h$.
\end{proof}
Note that in general $E^\star$ can contain a subset of $\{\nabla H =0\}$ with positive measure (see \cite{ABC2}).
However, in the next section we show that, if $H$ has the so-called \emph{weak Sard property}, then in fact $E^\star = \{\nabla H \ne 0\} \mod \L^2$.

\section{Weak Sard Property of Hamiltonians}
\label{s:new-four}

\subsection{Matching properties}
%\label{Ss_match_prop_I}

As we have seen at the beginning of Section \ref{ss:disintegration-lebesgue}, to every Hamiltonian $H$ we can associate a triple $(H, N, E)$ where $N$ is the set given by Theorem \ref{thm:ABC2_structure} and $E= \cup_{h \notin N} E_{h}^{\star}$.

Suppose now we have another triple $(\tilde H, \tilde N, \tilde E)$; we ask whether, given $x\in E \cap \tilde E$ it is true that $C_x = \tilde C_x$. This is essentially the definition of matching property; moreover, we will prove the ``Matching Lemma'', which states that gradients of $H$ and $\tilde{H}$ being parallel (in a simply connected set) is a sufficient condition for matching.

\subsection{Matching of two Hamiltonians}

Let us consider two Lipschitz Hamiltonians $H_{1}$ and $H_{2}$, defined on the same open, simply connected set $A$; according to Theorem \ref{thm:ABC2_structure}, we have two negligible sets $N_{1}$ and $N_{2}$ such that the level sets $E^{1}_{h}$ and $E_{h^{\prime}}^{2}$ of $H_{1}$ and $H_{2}$ are regular for $h \notin N_{1}$ and $h^{\prime} \notin N_{2}$. 
We set $E_{1}:= \cup_{h \notin N_{1}} E^{1}_{h}$ and $E_{2}:= \cup_{h^{\prime} \notin N_{2}} E^{2}_{h^{\prime}}$.

\begin{definition}
The Hamiltonians $H_{1}$ and $H_{2}$ \emph{match} in an open subset $A^{\prime} \subset A$ if $C_x^1 = C_x^2$ for $\mathscr{L}^2$-a.e. $x \in A^{\prime} \cap E_{1} \cap E_{2}$, where $C_x^i$ denotes the connected component in $A^{\prime}$ of the level sets $H_i^{-1}(H_i(x))$ which contains $x$.
\end{definition}

As usual, given two vectors $a$ and $b$ in $\R^2$ we write $a \parallel b$ if $a = \alpha b$ or $b = \alpha a$ for some real number $\alpha$.

We now state and prove the following

\begin{lemma}[Matching lemma]\label{lemma:matching}
Let $H_{1}, H_{2}$ be defined as above. If $\nabla H_1 \parallel \nabla H_2$ a.e. on $A^{\prime} \subset A$ open, then the Hamiltonians $H_{1}$ and $H_{2}$ match in $A^{\prime}$.
\end{lemma}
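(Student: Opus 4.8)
The goal is to show that if $\nabla H_1 \parallel \nabla H_2$ a.e. on an open simply connected $A' \subset A$, then for $\mathscr L^2$-a.e. $x \in A' \cap E_1 \cap E_2$ the connected components $C_x^1$ and $C_x^2$ of the respective level sets through $x$ coincide. The natural strategy is to introduce a third Hamiltonian that ``dominates'' both: since $\nabla H_1$ and $\nabla H_2$ are parallel a.e., the vector field $b_0 := \nabla^\perp H_1$ is a.e. a scalar multiple of $\nabla^\perp H_2$, and one expects the level sets of $H_1$ and $H_2$ to be subsets (connected-component-wise) of the level sets of a common Lipschitz function. Concretely, I would first record that parallelism of the gradients means $\dive(g\,\nabla^\perp H_1) = 0$ weakly for a suitable scalar $g$ (or directly that $\nabla^\perp H_1 \wedge \nabla^\perp H_2 = 0$ a.e.), and hence locally one can write $H_2 = \varphi \circ H_1$ on the set where $\nabla H_1 \neq 0$ — but this fails on $\{\nabla H_1 = 0\}$, which is exactly the delicate point.

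The cleaner route, and the one I would pursue, is to work directly with the structure theorem (Theorem~\ref{thm:ABC2_structure}) and the disintegration Lemma~\ref{disint-in-ball}. First I would fix a good value $h \notin N_1$, so $E_h^1$ is regular and $E_h^{1,\star}$ is a countable union of closed simple curves $C$. On each such curve $C$, parametrized by an admissible Lipschitz $\gamma$, the tangent direction is $\nabla^\perp H_1/|\nabla H_1|$ wherever $\nabla H_1 \neq 0$, which by hypothesis equals $\pm \nabla^\perp H_2 / |\nabla H_2|$ wherever $\nabla H_2 \neq 0$. Thus along $\gamma$ the function $s \mapsto H_2(\gamma(s))$ has a.e. zero derivative (its derivative is $\nabla H_2(\gamma(s)) \cdot \gamma'(s)$, and $\gamma'(s)$ is parallel to $\nabla^\perp H_2(\gamma(s))$ whenever the latter is nonzero, while on the complementary set $\nabla H_2(\gamma(s)) = 0$ forces the product to vanish too — modulo a Hausdorff-negligible set on $C$, which is controlled because $\H^1 \rest C \ll |\gamma'|\mathscr L^1$ and the bad set has measure zero for a.e. $h$ by the coarea argument of Remark~\ref{rem:h1-perp-sigma}). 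Since $H_2 \circ \gamma$ is Lipschitz with a.e.-zero derivative, it is constant; hence $C$ is contained in a single level set $E_{h'}^2$ of $H_2$, and being connected it lies in a single connected component, i.e. $C \subset C_x^2$ for every $x \in C$.

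By symmetry (swapping the roles of $H_1$ and $H_2$ and using $h' \notin N_2$), for a.e. $x$ one gets $C_x^2 \subset C_x^1$ as well, and the two inclusions give $C_x^1 = C_x^2$. To package this into an ``$\mathscr L^2$-a.e. $x$'' statement, I would invoke the disintegration \eqref{disint-lebesgue}: the set of $x \in A' \cap E_1 \cap E_2$ lying on a curve $C$ with $h \in N_1$ or $h' \in N_2$, or lying in the $\sigma_h$/$\kappa_h$ parts, is $\mathscr L^2$-negligible, because $\mathscr L^2 \rest A'$ restricted to $E_1$ is absolutely continuous with respect to $\int c_h \H^1 \rest E_h^1\, dh$ plus a singular part concentrated on $\{\nabla H_1 = 0\}$, and the latter set is handled by the coarea estimate $\int \1_{\{\nabla H_1 = 0\}}|\nabla H_1| = 0$ exactly as in the proof of Lemma~\ref{lemma:library-plus}. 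The main obstacle is precisely this measure-zero bookkeeping at the critical points: ensuring that the identity ``$H_2 \circ \gamma$ has zero derivative a.e. on $C$'' survives despite the a.e.-parallelism hypothesis saying nothing pointwise on $\{\nabla H_1 = 0\} \cup \{\nabla H_2 = 0\}$. I expect this to be resolved by choosing the exceptional set $N_1 \cup N_2$ large enough (still negligible) so that, for the remaining $h$, $\H^1(E_h^1 \cap \{\nabla H_1 = 0\}) = 0$ and the analogous statement for $H_2$ on each component hold simultaneously — a countable intersection of full-measure conditions, hence still full measure.
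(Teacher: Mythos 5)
Your proof is correct, but it follows a genuinely different route from the paper's. The paper first derives the distributional identity $\dive(H_2\,\nabla^{\perp}H_1)=0$ on $A'$ (integrating by parts, using $\dive \nabla^{\perp}H_1=0$ and $\nabla^{\perp}H_1\perp\nabla H_2$ a.e.) and then invokes the reduction theorems of \cite{BG} (Theorems 4.1 and 6.1 there) to conclude that $H_2$ is constant on a.e.\ nontrivial connected component of the level sets of $H_1$; the disintegration step converting ``a.e.\ $h$'' into ``$\L^2$-a.e.\ $x$'' and the final symmetrization are the same as yours. You instead bypass both the distributional formulation and the citation to \cite{BG} by a direct pointwise computation, showing $(H_2\circ\gamma)'=\nabla H_2(\gamma)\cdot\gamma'=0$ a.e.\ along an admissible parametrization of each component $C\subset E^{1}_{h}$; this keeps the proof self-contained within the tools already set up (Theorem~\ref{thm:ABC2_structure}, coarea, area formula), which is a real advantage. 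The price is one additional coarea-type fact that you leave implicit and that should be stated: since $H_2$ is merely Lipschitz, its non-differentiability set $D$ is only $\L^2$-null by Rademacher, and an $\L^2$-null set can carry full $\H^1$-measure on a prescribed curve, so $\nabla H_2(\gamma(s))$ need not exist for a single $s$ a priori. The remedy is the same coarea estimate you already use for the other exceptional sets, $\int\H^1(E^{1}_{h}\cap D)\,dh=\int_{D}|\nabla H_1|\,dx=0$, so after enlarging $N_1$ by a further $\L^1$-null set $H_2$ is differentiable $\H^1$-a.e.\ on $E^{1}_{h}$, and the classical chain rule applies at every $s$ at which $\gamma$ is differentiable and $H_2$ is differentiable at $\gamma(s)$. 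With that point made explicit, your argument closes and yields the same conclusion as the paper's.
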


\begin{proof}
Let $b_1:= \nabla^{\perp}H_1$. Then $\dive b_1 = 0$. Let us prove that 
\begin{equation}\label{E_const_level_sets}
\dive(H_2 b_1)=0
\end{equation}
in the sense of distributions. Indeed, we have for every $\varphi \in \text{Lip}_c(A^{\prime})$
\begin{equation*}
\int H_2 (b_1 \cdot \nabla \varphi) \, dx = \int \big[ b_1 \cdot \nabla(H_2 \varphi) - \varphi (b_1 \cdot \nabla H_2) \big] \, dx.
\end{equation*}
The first term is zero because $\dive b_1 = 0$ (and $\varphi H_2$ can be used as test function since it is Lipschitz); the second term is also zero because $\nabla H_2 \parallel \nabla H_1$ a.e. on $A'$, hence $b_1 \perp \nabla H_2$ a.e. on $A^\prime$.

From \eqref{E_const_level_sets}, using  \cite[Theorem 4.1 and 6.1]{BG}, we obtain that there exists a $\mathscr L^1$ negligible set $N$ such that $H_2$ is constant on every non trivial connected components $C \cap A'$ of the level sets of $H_1$ which do not correspond to values in $N$. By disintegration, we have that the sets of points $x \in A' \cap E_1$ such that $H_1(x) \notin N$ are a negligible set and therefore we can infer that for a.e. $x \in A' \cap E_1$, $H_{2}$ is constant along the connected components in $A^{\prime}$ of the level sets of $H_{1}$. By repeating the same argument for $H_2$ we get the claim.
\end{proof}

\subsection{The Weak Sard property}

Let $f \colon \R^2 \to \R$ be a Lipschitz function and let $S$ be the critical set of $f$, defined as the set of all $x \in \R^{2}$ where $f$ is not differentiable or $\nabla f(x)=0$. We are interested in the following property:

\smallskip

\noindent {\it the push-forward according to $f$  of the restriction of $\L^{2}$ to $S$ is singular with respect to $\L^{1}$, that is }
\begin{equation*}
f_{\#}\left(\L^{2} \rest S\right) \perp \L^{1}. 
\end{equation*}

\smallskip

\noindent This property clearly implies the following \emph{Weak Sard Property}, which is used in \cite[Section 2.13]{ABC1}:
\begin{equation*}
 f_{\#}\left( \L^{2} \rest (S \cap E^{\star}) \right) \perp \L^{1},
 \end{equation*}
where the set $E^{\star}$ is the union of all connected components with positive length of all level sets of $f$. We point out that the relevance of the Weak Sard Property in the framework of transport and continuity equation is explained is \cite[Theorem 4.7]{ABC1}.

\begin{remark}\label{l-wsp-criterea}
Informally, the weak Sard property means that the ``good'' level sets of $H$ do not intersect the critical set $S$, apart from a negligible set.
In terms of the disintegration of the Lebesgue measure \eqref{disint-lebesgue}, we can say that $H$ has the weak Sard property if and only if $\sigma_h=0$ for a.e. $h$.
\end{remark}

Now we give the following 

\begin{definition} We set 
 \begin{equation*}
\tilde{r}_{B}:=r_{B} + \1_{F},
\end{equation*}
where we recall that $r_B$ is the function defined in \eqref{E_def_r_B} and $F$ is the set defined in \eqref{E_def_F}.
\end{definition}

By linearity of divergence, by Lemma \ref{lemma:divrb} and Lemma \ref{lemma:divrf}, we have 
\begin{equation*}
\dive (\tilde{r}_{B}b)=0 
\end{equation*}
in $\ss D^{\prime}(B)$. Therefore, we conclude that there exists a Lipschitz potential $\tilde{H}$ such that $\nabla \tilde{H}^{\perp} =\tilde{r}_{B}b$.

Moreover, we observe that $\nabla H \parallel \nabla \tilde{H}$ a.e. in $B$: therefore we can apply Matching Lemma \ref{lemma:matching} to get that the regular level sets of $H$ and of $\tilde{H}$ agree. In particular, we obtain $E = \tilde{E} \mod \L^{2}$, directly from the definition of $\tilde{H}$. 
We note also that the function $\tilde{H}$ has the Weak Sard property: indeed, directly from the construction, we have $\nabla \tilde{H} \ne 0$ on $E$ hence, since $E=\tilde{E}\mod \L^{2}$, it follows that $\L^{2}(\tilde{E} \cap \tilde{S})=0$.

Finally, disintegrating $\L^{2} \rest E$ with respect to $H$ we get 
\begin{equation*}
\L^{2} \rest E = \int_{\R} (c_{h} \H^{1} \rest E_{h} + \sigma_{h})\,dh, 
\end{equation*}
while using the Hamiltonian $\tilde{H}$
\begin{equation*}
\L^{2} \rest E = \int_{\R} \tilde{c}_{h} \H^{1} \rest \tilde{E}_{h}\,dh.
\end{equation*}

In particular, it follows that $\sigma_{h}=0$ for a.e. $h$, which means that $H=\tilde{H}$ (up to additive constants) and $H$ has the Weak Sard Property.

We collect this result in the following
\begin{lemma}
\label{L_weak_Sard_H}
The Hamiltonian $H_B$ has the weak Sard property.
\end{lemma}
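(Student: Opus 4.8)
The plan is to collect the pieces that have already been assembled in this section and observe that they force $\sigma_h = 0$ for almost every $h$, which is exactly the Weak Sard property of $H_B$. I would first recall that the divergence relation $\dive(\tilde r_B b) = 0$ in $\ss D'(B)$ follows from linearity together with Lemma~\ref{lemma:divrb} and Lemma~\ref{lemma:divrf}, and that since $B$ is simply connected this yields a Lipschitz potential $\tilde H$ with $\nabla^\perp \tilde H = \tilde r_B b$. Because $\nabla H = r_B b^\perp$ and $\nabla \tilde H = \tilde r_B b^\perp$ are both scalar multiples of $b^\perp$, we have $\nabla H \parallel \nabla \tilde H$ a.e.\ in $B$, so the Matching Lemma~\ref{lemma:matching} applies and the regular level sets of $H$ and $\tilde H$ coincide up to $\L^2$-null sets; in particular $E = \tilde E \bmod \L^2$.

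Next I would use the defining feature of $\tilde H$: on $E$ we have $\tilde r_B = r_B + \1_F > 0$ (recall $r_B \geq 0$ and $F = \{b\ne 0, r_B = 0\}\cap E$, so $\tilde r_B$ is strictly positive wherever $b\ne 0$ on $E$), hence $\nabla \tilde H \ne 0$ $\L^2$-a.e.\ on $E$. Combined with $E = \tilde E \bmod \L^2$ this gives $\L^2(\tilde E \cap \tilde S) = 0$, i.e.\ $\tilde H$ has the Weak Sard property. The crux of the argument is then to compare the two disintegrations of $\L^2 \rest E$ with respect to $H$: using Lemma~\ref{disint-in-ball} (and Remark~\ref{rem:h1-perp-sigma}, so that no $\kappa_h$ term survives on $E$) one writes
\begin{equation*}
\L^2 \rest E = \int_\R \left( c_h \H^1 \rest E_h + \sigma_h \right) dh,
\end{equation*}
with $\sigma_h \perp \H^1\rest E_h$ concentrated on $E_h \cap \{\nabla H = 0\}$, whereas the analogous disintegration coming from $\tilde H$ — which has no singular part because of the Weak Sard property just established, and whose level sets agree with those of $H$ — reads $\L^2\rest E = \int_\R \tilde c_h \,\H^1 \rest \tilde E_h\, dh = \int_\R \tilde c_h\, \H^1\rest E_h\, dh$. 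Since the $\H^1\rest E_h$-absolutely continuous parts and the singular parts must separately coincide by uniqueness of the Lebesgue decomposition (fibrewise, and then by uniqueness of disintegration), we conclude $\sigma_h = 0$ for $\L^1$-a.e.\ $h$.

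Finally, $\sigma_h = 0$ for a.e.\ $h$ means the singular layer in the disintegration of $\L^2\rest B$ concentrated on the critical set $S$ contributes nothing over $E^\star$, which is precisely the statement $H_\# (\L^2 \rest (S\cap E^\star)) \perp \L^1$; equivalently $H = \tilde H$ up to an additive constant, so $H_B = H$ inherits the Weak Sard property of $\tilde H$. I expect the main obstacle to be the careful justification that the two disintegrations of $\L^2\rest E$ can be matched term by term — one must be sure that the absolutely continuous and singular components with respect to $\H^1\rest E_h$ in the fibre over $h$ are uniquely determined, invoking Remark~\ref{rem:h1-perp-sigma} to know $\sigma_h\perp \H^1\rest E_h$, and invoking the uniqueness clause of the Disintegration Theorem to pass from the fibrewise identity to the global one; the rest is bookkeeping with the already-established Matching Lemma and the positivity of $\tilde r_B$ on $E$.
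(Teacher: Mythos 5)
Your proposal follows the paper's own argument essentially verbatim: construct $\tilde H$ from $\tilde r_B = r_B + \1_F$ via Lemmas~\ref{lemma:divrb} and~\ref{lemma:divrf}, invoke the Matching Lemma~\ref{lemma:matching} to identify the regular level sets, observe $\nabla\tilde H \ne 0$ a.e.\ on $E$ so that $\tilde H$ has the Weak Sard property, and compare the two disintegrations of $\L^2\rest E$ to force $\sigma_h = 0$ for a.e.\ $h$. The extra care you take in justifying the fibrewise comparison (via $\sigma_h\perp\H^1\rest E_h$ from Remark~\ref{rem:h1-perp-sigma} and uniqueness of the Lebesgue decomposition and of the disintegration) is a correct filling-in of a step the paper leaves implicit.
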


We conclude this section with the following corollary concerning the covering properties of the set $E^\star$ defined in \eqref{Ndef}:
\begin{corollary}\label{c-weak-Sard}
Suppose that $H$ has the weak Sard property.
Let $E^\star$ be the set defined in \eqref{Ndef}. Then
\begin{equation*}
E^\star = \{\nabla H \ne 0\} \mod \L^2.
\end{equation*}
\end{corollary}

\begin{proof}
The argument is similar to Lemma~\ref{l-E-covers-nonzero-b}.
Let $Q = E^\star \setminus \{\nabla H\ne 0\}$. By~\eqref{disint-lebesgue}
\begin{equation*}
\L^2(Q) = \int \left(\int_Q \, d\sigma_h \right) \, dh = 0
\end{equation*}
since by Remark~\ref{l-wsp-criterea} $\sigma_h=0$ for a.e. $h$.
\end{proof}

\begin{remark}\label{rem:referee} If we do not assume $\BV$ regularity of $b$, but $b(x)\ne 0$ for $\L^2$-a.e. $x\in \R^2$ the conclusion of Lemma \ref{L_weak_Sard_H} still holds. This can be proved using minor modifications of the above argument. More precisely, since $b$ is nearly incompressible the function $m(x):=\int_0^T \rho(\tau,x) \, d\tau$, where $\rho$ is the density of $b$, solves
\begin{equation} \label{e-continuity-integrated-in-time}
\dive (m b) = \rho(T, \cdot) - \rho(0, \cdot)
\end{equation}
in $\ss D'(B)$, being $\rho(T,\cdot)$ and $\rho(0,\cdot)$ the weak-$\star$ limits in $L^\infty$ of $\rho(t,\cdot)$ as $t\to T$ and $t\to 0$ respectively.
Applying Lemmas~\ref{disint-div-ub}, \ref{cc-lemma}, \ref{separation} with $u=m$, from \eqref{div-on-level-set-sing-term} we obtain
\begin{equation}\label{e-m-sigma}
\dive (m b \sigma_h \rest C) = 0.
\end{equation}
Hence Lemma \ref{lemma:divrf} holds replacing $\1_F b$ with $m \1_F b$: in particular, setting 
\begin{equation*}
\tilde{r}_{B}:=r_{B} + m\1_{F}
\end{equation*}
we can repeat the argument of Section \ref{s:new-four}. 
\end{remark}

\section{Level sets and trajectories I}\label{s:new-five}

In this section, we assume that $H_{B}$ is defined on all $\R^2$ (using standard theorems for the extension of Lipschitz maps).

\subsection{Trajectories} We now present some lemmas which relate the trajectories $\gamma \in \mathsf T_{B}$ to the level sets of the Hamiltonian. The first result we prove is that $\eta$-a.e. $\gamma$ is contained in a level set. 

\begin{lemma} \label{lemma:library} Let $B \in\mathscr B$, $t_{1},t_{2} \in [0,T]$ and set $\mathsf T:= \left\{\gamma: \gamma\left( (t_{1}, t_{2}) \right) \subset B \right\}$. Then $\eta$-a.e. $\gamma \in \mathsf T$ we have $(t_{1}, t_{2}) \ni t \mapsto H(\gamma(t))$ is a constant function.\end{lemma}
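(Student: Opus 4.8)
The statement is that for $\eta$-a.e.\ curve $\gamma$ which stays inside the ball $B$ on the time interval $(t_1,t_2)$, the composition $t\mapsto H(\gamma(t))$ is constant there. Heuristically this is just the statement that $H$ is a Hamiltonian for $r_B b$ and that $\eta$-a.e.\ curve is an integral curve of $b$, so $\frac{d}{dt}H(\gamma(t)) = \nabla H(\gamma(t))\cdot \dot\gamma(t) = \nabla H(\gamma(t))\cdot b(\gamma(t)) = (r_B b)^\perp(\gamma(t))\cdot b(\gamma(t)) = 0$. The obstruction to making this rigorous is that $H$ is only Lipschitz (so $\nabla H$ exists only a.e., not along a fixed curve), that the identity $\nabla^\perp H = r_B b$ holds only $\L^2$-a.e., and that $r_B$ may vanish on a large set. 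So a pointwise chain-rule argument along a single curve is not available and one must argue in an averaged/measure-theoretic way, integrating over the family of curves and using that $(e_t)_\#\eta_B = \rho_B(t,\cdot)\L^2$ is absolutely continuous.

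\textbf{Main steps.} First I would reduce to the set $\mathsf T_B$ (or the analogue with the given $(t_1,t_2)$) and recall from \eqref{eq:rho-e-eta} and the construction of $\eta_B$, $\rho_B$ that $\eta_B$-a.e.\ curve is an absolutely continuous integral curve of $b$ with $\dot\gamma(t) = b(\gamma(t))$ for a.e.\ $t$, and that the time-integrated occupation measure is $r_B\L^2$. Next, for a test function $\psi\in C^\infty(\R)$ and $\varphi\in C_c^\infty((t_1,t_2))$, I would consider
\[
\int \int_{t_1}^{t_2} \varphi'(t)\,\psi'(H(\gamma(t)))\, \dot\gamma(t)\cdot(\text{something})\, dt\, d\eta,
\]
but more cleanly: compute $\int_{\mathsf T}\big[\psi(H(\gamma(t_2)\wedge\cdot)) \text{-type boundary terms}\big]$, i.e.\ test the chain rule weakly. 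The cleanest route is: fix $\Phi\in C^1(\R)$ and show $\frac{d}{dt}\int \varphi(\gamma)\,d(e_t)_\#\eta_{\mathsf T}$ type identities, using that along $\L^2$-a.e.\ point $b\cdot\nabla H = 0$, and that by Fubini the set of $(t,\gamma)$ where $\gamma(t)$ lands in the $\L^2$-null ``bad'' set $\{\nabla H\text{ fails or }\nabla^\perp H\ne r_Bb\}$ is $\L^1\otimes\eta$-null \emph{after} multiplying by the density — this is exactly where absolute continuity of $(e_t)_\#\eta_{\mathsf T}\ll\L^2$ is used. Then I would invoke a result from \cite{ABC2} (the area-formula / one-dimensional restriction statements already cited in the excerpt, cf.\ Lemma~\ref{lemma:ABC2_density} and the surrounding material) to deduce that $t\mapsto H(\gamma(t))$, being Lipschitz with a.e.-vanishing derivative for $\eta$-a.e.\ $\gamma$, is constant.

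\textbf{Where the difficulty lies.} The heart of the matter is justifying $\frac{d}{dt}H(\gamma(t)) = 0$ for a.e.\ $t$ for $\eta$-a.e.\ $\gamma$, \emph{despite} $r_B$ possibly vanishing: where $r_B(\gamma(t)) = 0$ the identity $\nabla^\perp H = r_B b$ only tells us $\nabla H(\gamma(t)) = 0$ if $\nabla H$ exists there, but $H$ need not be differentiable on the $\L^2$-null set $\{\nabla^\perp H \ne r_B b\}$. The resolution is that $\eta_{\mathsf T}$-a.e.\ curve spends zero time (in the $\L^1$ sense) on any fixed $\L^2$-null set, because the occupation measure $\int_{t_1}^{t_2}(e_t)_\#\eta_{\mathsf T}\,dt \ll \L^2$; hence for $\eta$-a.e.\ $\gamma$, for a.e.\ $t$, $\gamma(t)$ is a point of differentiability of $H$ with $\nabla^\perp H(\gamma(t)) = r_B(\gamma(t))b(\gamma(t))$, and then $\frac{d}{dt}H(\gamma(t)) = \nabla H(\gamma(t))\cdot\dot\gamma(t) = \nabla H(\gamma(t))\cdot b(\gamma(t))$, which is the scalar product of $r_B(\gamma(t))b(\gamma(t))$ rotated by $90^\circ$ with $b(\gamma(t))$, hence $0$. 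The technical care needed is in the chain rule for the Lipschitz function $H$ composed with the absolutely continuous curve $\gamma$ — valid because $\gamma$ is Lipschitz and $H$ is differentiable along $\gamma$ for a.e.\ time — together with a Fubini argument to pass from ``a.e.\ $t$ for a.e.\ $\gamma$'' to the stated conclusion.
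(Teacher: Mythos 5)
Your proposal is correct, but it follows a genuinely different route from the paper. The paper mollifies the Hamiltonian, setting $H_{\eps}=H\star\varrho_{\eps}$, and estimates $I(t)=\int_{\mathsf T}|H(\gamma(t))-H(\gamma(t_1))|\,d\eta$ by three terms: two commutator-type terms that vanish because $(e_t)_\#\eta=\rho(t,\cdot)\L^2\ll\L^2$ and $H_\eps\to H$ a.e., and a middle term $\int_{t_1}^{t}\int|\nabla H_\eps(\gamma(s))\cdot b(\gamma(s))|\,d\eta\,ds$ which, after pushing forward to the occupation measure $r_{\mathsf T}\L^2$, converges to $\int|\nabla H\cdot b|\,r_{\mathsf T}\,dx=0$. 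You instead avoid mollification entirely: you use the same absolute continuity $(e_t)_\#(\eta\rest\mathsf T)\ll\L^2$ together with Fubini to show that $\eta$-a.e.\ curve spends zero time on the $\L^2$-null set where $H$ fails to be differentiable or where $\nabla^\perp H\ne r_B b$, and then apply the pointwise chain rule at a.e.\ $t$ along a.e.\ curve. This is legitimate: Fr\'echet differentiability of $H$ at $\gamma(t_0)$ combined with the Lipschitz bound $|\gamma(t)-\gamma(t_0)|\le\|b\|_\infty|t-t_0|$ does give $\frac{d}{dt}H(\gamma(t))|_{t=t_0}=\nabla H(\gamma(t_0))\cdot b(\gamma(t_0))=0$, and a Lipschitz function of $t$ with a.e.\ vanishing derivative is constant. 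Your identification of the crux — that the curve must avoid, for a.e.\ time, the null set where the identity $\nabla^\perp H=r_Bb$ or differentiability fails, and that this is exactly what $(e_t)_\#\eta\ll\L^2$ buys — is precisely the point. What each approach gains: yours is shorter and makes the mechanism transparent; the paper's is more robust in that it never differentiates the Lipschitz $H$ pointwise, only the smooth $H_\eps$, so it sidesteps the (standard but delicate) chain-rule justification altogether. Your middle paragraph with the test functions $\varphi,\psi$ is dispensable and somewhat muddled; the argument in your final paragraph is the one that should be written out.
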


\begin{proof} Let $(\varrho_{\varepsilon})_{\varepsilon}$ be the standard family of convolution kernels in $\R^{2}$. We set $H_{\varepsilon}(x):= H \star \varrho_{\varepsilon}(x)$ for any $x\in B$.

For every $t \in [t_{1}, t_{2}]$ define 
\begin{equation*}
I(t):= \int_{\mathsf T} \vert H(\gamma(t))-H(\gamma(0)) \vert d\eta(\gamma) 
\end{equation*}
and we will prove $I \equiv 0$. 

First note that $I$ is positive because the integrand is non-negative and $\eta$ is positive. On the other hand,  
\begin{equation*}
\begin{split}
 I(t) \le  \underbrace{ \int_{\mathsf T} \vert H(\gamma(t))-H_{\eps}(\gamma(t)) \vert d\eta(\gamma)}_{ I^{\eps}_1} + \underbrace{  \int_{\mathsf T} \vert H_{\eps}(\gamma(t))-H_{\eps}(\gamma(0)) \vert d\eta(\gamma) }_{I^{\eps}_2} \\ +
 \underbrace{ \int_{\mathsf T} \vert H_{\eps}(\gamma(0))-H(\gamma(0)) \vert d\eta(\gamma)}_{I^{\eps}_3}.
 \end{split}
\end{equation*}
Now for a.e. $x \in \R^2$ we have $H_{\eps}(x) \to H(x)$: hence 
\begin{equation*}
\int_{\mathsf T} \left\vert H_{\eps}(\gamma(t)) - H(\gamma(t)) \right\vert d\eta(\gamma) \le \int_{B} \vert H_{\eps}(x) - H(x)\vert \rho(t,x)dx \to 0
\end{equation*}
as $\eps \to 0$. Therefore, we can infer that
\begin{equation*}
I^{\eps}_1 \to 0,  \qquad I^{\eps}_3 \to 0 
\end{equation*}
as $\eps \downarrow 0$.

Let us study $I^{\eps}_2$. We have
\begin{equation*}
\begin{split}
I^{\eps}_2(t) & \le \int_{\mathsf T} \int_{t_{1}}^{t} \vert \partial_s H_\eps(\gamma(s)) \vert \, ds \, d\eta(\gamma) \\
                 &= \int_{\mathsf T} \int_{t_{1}}^{t}  \vert \nabla H_{\eps}(\gamma(s)) \cdot b(\gamma(s)) \vert \, ds \, d\eta(\gamma) \\ 
                 & = \int_{t_{1}}^{t} \int \vert  \nabla H_{\eps}(x) \cdot b(x) \vert \, d(e_t \# \eta \rest\mathsf T)(x) \, ds \\ 
                 & \le \int_{0}^{T} \int \vert\nabla H_{\eps}(x) \cdot b(x) \vert \rho_{\mathsf T}(t,x)\, dx \, ds \\
                 & = \int \vert \nabla H_{\eps}(x) \cdot b(x) \vert r_{\mathsf T}(x) \, dx \to \int \vert \nabla H(x) \cdot b(x) \vert r_{\mathsf T}(x) \, dx = 0  
\end{split}
\end{equation*}
where we have used $\nabla H_{\eps}(x) \to \nabla H(x)$ for a.e. $x$. In the end, we have that $I_2^\eps \to 0$ as $\eps \downarrow 0$ and this concludes the proof. \end{proof}

We now show that Lemma \ref{lemma:library} can be improved, showing indeed that $\eta_{B}$-a.e. $\gamma$ is contained in a \emph{regular} level set of $H$. 

\begin{lemma}\label{lemma:library-plus} Up to a $\eta_B$ negligible set, the image of every $\gamma \in \mathsf T_{B}$ is contained in a connected component of a \emph{regular} level set of $H_B$.
\end{lemma}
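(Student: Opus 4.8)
The plan is to strengthen Lemma~\ref{lemma:library} — which only puts $\eta_B$-a.e.\ trajectory inside \emph{some} level set of $H_B$ — into the assertion that this level is a \emph{regular} one and that the trajectory lies in a single connected component of it; the whole point is to control, in an absolutely continuous way, the value of $H_B$ realized along $\eta_B$-typical curves. First I would reduce to a countable situation: every $\gamma\in\mathsf T_B$ spends a positive amount of time strictly inside the open ball $B$, so, as in Remark~\ref{rem:unione-tb-tutto-moving}, there are rationals $t_1<t_2$ with $\gamma([t_1,t_2])\subset B$; hence $\mathsf T_B\subset\bigcup_{t_1<t_2\in\Q}\mathsf T_{t_1,t_2}$ with $\mathsf T_{t_1,t_2}:=\{\gamma\in\Gamma:\gamma([t_1,t_2])\subset B\}$, and it suffices to treat each of these countably many sets. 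Since $\eta$ is concentrated on integral curves of $b$, I may also assume $\dot\gamma=b(\gamma)$ on $[t_1,t_2]$.

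By Lemma~\ref{lemma:library}, for $\eta$-a.e.\ $\gamma\in\mathsf T_{t_1,t_2}$ the function $t\mapsto H_B(\gamma(t))$ is constant on $[t_1,t_2]$, with value $h(\gamma)$; thus $\gamma([t_1,t_2])\subset E_{h(\gamma)}$, and being a continuous image of an interval it is connected, hence contained in exactly one connected component $C_\gamma$ of $E_{h(\gamma)}$. Two things remain: that $h(\gamma)\notin N$ for $\eta$-a.e.\ such $\gamma$, so that $E_{h(\gamma)}$ is a regular level set (here $N$ is as in \eqref{Ndef}, and we are free to enlarge it by any $\L^1$-negligible set), and that, when $\gamma$ is nonconstant on $[t_1,t_2]$, the component $C_\gamma$ has positive $\H^1$-measure, hence is a closed simple curve by Theorem~\ref{thm:ABC2_structure}\,(3).

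For the first of these it is enough to show that the push-forward of $\eta\rest\mathsf T_{t_1,t_2}$ under $\gamma\mapsto h(\gamma)$ is absolutely continuous with respect to $\L^1$, since $\L^1(N)=0$. I would introduce the occupation measure $\mathtt m:=\int_{t_1}^{t_2}(e_t)_\#(\eta\rest\mathsf T_{t_1,t_2})\,dt$ on $\T^2$; by \eqref{eq:rho-e-eta} and $\ln\rho\in L^\infty$ one has $\mathtt m\le(t_2-t_1)\|\rho\|_\infty\,\L^2\rest B$, while the constancy of $H_B\circ\gamma$ gives $(H_B)_\#\mathtt m=(t_2-t_1)\,h_\#(\eta\rest\mathsf T_{t_1,t_2})$, so it suffices to prove $(H_B)_\#\mathtt m\ll\L^1$. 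The strategy is to compare $\mathtt m\ll\L^2\rest B$ with the disintegration \eqref{disint-lebesgue} of $\L^2\rest B$ along $H_B$ from Lemma~\ref{disint-in-ball}: the genuinely $h$-absolutely-continuous part of $\L^2\rest B$ is $\int c_h\,\H^1\rest E_h\,dh$, whose $H_B$-image is $\ll\L^1$, whereas the $h$-singular part is carried by the degeneracy set $\{\nabla H_B=0\}\cup(B\setminus E^\star)$ — with $\{\nabla H_B=0\}=\{b=0\}\cup F$ a.e., $F=\{b\ne0,\ r_B=0\}$ — on which $\sigma_h$, $\kappa_h$ and $\zeta$ live (Remarks~\ref{rem:conc_set} and \ref{rem:h1-perp-sigma}, Lemma~\ref{disint-in-ball}\,(4)), and whose $H_B$-image is $\perp\L^1$ and hence may be absorbed into $N$. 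To rule out that $\mathtt m$ itself sits on this degeneracy set I would use two facts about integral curves: that $\H^1\bigl(\gamma([t_1,t_2])\cap\{b=0\}\bigr)=0$ (the length of the image is at most $\int|b(\gamma)|$, which vanishes there), so that the occupation of $\gamma$ along its level component is absolutely continuous with respect to $\H^1\rest E_{h(\gamma)}$; and that $r_B$, which records the aggregate occupation of the $\mathsf T_B$-trajectories, vanishes on $F$. Together with the near-incompressibility bound $\rho\ge c>0$ these exclude the $\eta_B$-typical trajectories from dwelling on $\{b=0\}$ and on $\{\nabla H_B=0\}\cup(B\setminus E^\star)$, giving $(H_B)_\#\mathtt m\ll\L^1$.

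The main obstacle is precisely this absolute continuity $(H_B)_\#\mathtt m\ll\L^1$: the soft bound $(e_t)_\#\eta\ll\L^2$ by itself gives only $h_\#(\eta\rest\mathsf T_{t_1,t_2})\ll(H_B)_\#\L^2$, which is \emph{not} enough, because $(H_B)_\#\L^2$ does charge $N$ — indeed its whole singular mass is concentrated there, cf.\ Lemma~\ref{disint-in-ball}\,(4). One must genuinely exploit that the relevant curves solve $\dot\gamma=b(\gamma)$ (so their occupation measures are absolutely continuous with respect to $\H^1$ on the level set and $\H^1$-concentrated on $\{b\ne0\}$) and that $b\in\BV$ (Remark~\ref{rem:conc_set}), to prove what is essentially a weak-Sard-type statement \emph{along} the family of trajectories. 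Once that is in place, the remaining steps — the countable reduction, the use of Lemma~\ref{lemma:library}, and the passage from a level set to one of its connected components — are routine.
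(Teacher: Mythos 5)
Your route differs from the paper's. You aim at the quantitative statement that the law of the level value, $h_\#\bigl(\eta\rest\mathsf T_{t_1,t_2}\bigr)$, is absolutely continuous with respect to $\L^1$, via the occupation measure $\mathtt m$. The paper instead argues by a dichotomy that needs much less: by Lemma \ref{lemma:library} the image of $\gamma$ inside $B$ lies in a single level set, so either it meets $E=\cup_{h\notin N}E_h^\star$ --- in which case its level value is automatically outside $N$ and the level set is regular, with no absolute continuity statement required --- or it is entirely contained in $E^c=B\setminus E$. For the second alternative the Coarea formula gives $\int_{E^c}|\nabla H_B|\,dx=\int_{E^c}r_B|b|\,dx=0$, which, unwound through $r_B=\int_0^T\rho_B\,dt$ and $(e_t)_\#\eta_B=\rho_B(t,\cdot)\L^2$, yields $b(\gamma(t))=0$ for a.e.\ $t$ for $\eta_B$-a.e.\ such $\gamma$; then $\gamma$ is constant, contradicting $\gamma(0)\notin B$ and $\gamma\in\Gamma_B$. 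No control of the \emph{time} spent on the degenerate set is ever needed --- only that the image is not entirely contained in it.

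Your version does need that control, and this is where the gap is. To get $(H_B)_\#\mathtt m\ll\L^1$ you must show $\mathtt m(B\setminus E^\star)=0$; since $r_B|b|=0$ a.e.\ on $B\setminus E^\star$ and $\mathtt m\le r_B\L^2$, the portion where $r_B=0$ carries no $\mathtt m$-mass, but the portion inside $\{b=0\}$ is not handled by your argument. The justification you give --- $\H^1\bigl(\gamma([t_1,t_2])\cap\{b=0\}\bigr)=0$, hence the occupation of $\gamma$ is absolutely continuous with respect to $\H^1\rest E_{h(\gamma)}$ --- is a non sequitur: zero \emph{length} of the image over $\{b=0\}$ does not bound the occupation \emph{time} there. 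A trajectory may sit at a rest point for a whole time interval, contributing no length but positive mass to $\mathtt m\rest\{b=0\}$, and since $\{b=0\}$ may have positive $\L^2$-measure the soft bound $\mathtt m\le\|\rho\|_\infty(t_2-t_1)\,\L^2$ does not exclude this either. The statement you actually need --- that $\eta$-a.e.\ nonconstant trajectory spends zero time in $\{b=0\}$ --- is Lemma \ref{lemma:eta_M}, which rests on the locality of the divergence (Proposition \ref{prop:locality}) and is established only later, by entirely different (trace/covering) means. Either import that result explicitly, accepting the reordering, or switch to the paper's dichotomy, which closes the proof with the tools already available in Section \ref{s:new-five}.
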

\begin{proof}
Using Lemma \ref{lemma:library}, we remove $\eta_B$-negligible set of trajectories along which $H_B$ is not constant. Set $E^{c}:=B \setminus E$ and consider the set 
\begin{equation*}
\mathscr P:= \big\{\gamma \in \mathsf T_{B}: \, \gamma\left( (0,T) \right) \cap B \subset E^{c} \big\}. 
\end{equation*}
It is enough to show that $\eta(\mathscr P) = 0$: this means that for $\eta$-a.e. $\gamma$ the image $\gamma(0,T)$ is not contained in the complement of $E$ and thus % , since $P \cup E = \R^2$ 
we must have (in the ball) $\gamma(0,T) \subset E$ for $\eta$-a.e. $\gamma \in \mathsf T_{B}$ (this follows remembering that a.e. $\gamma$ is contained in a level set).

By Coarea formula (see Lemma \ref{disint-in-ball}), $\vert \nabla H\vert \mathscr{L}^2 \rest E^{c}  = 0$, i.e. 
\begin{equation*}
 \int \1_{E^{c}}(x) \vert \nabla H(x) \vert\, dx = 0.
\end{equation*}
Since $\nabla H = r_B b^{\perp}$ in $B$ and $r_B \ge 0$ (since $\rho_{B} > 0$), we have
\begin{equation*}
\begin{split}
0 & = \int \1_{E^{c}}(x) \vert r_B(x) b(x) \vert \, dx  \\
& = \int \1_{E^{c}}(x) r_{B}(x) \vert b(x) \vert \, dx \\
& = \int \int_0^T \1_{E^{c}}(x) \rho_B(t,x) \vert b(x) \vert \, dx \, dt. 
 \end{split}
\end{equation*}
Using \eqref{eq:rho-e-eta} we have
\begin{equation*}
0 = \int_0^T \int \1_{E^{c}}(\gamma(t)) \vert b(\gamma(t)) \vert \, d\eta(\gamma) \, dt = \int_0^T \int_{\mathscr P} \vert b(\gamma(t)) \vert \, d\eta(\gamma) \, dt 
\end{equation*}
which implies (by Fubini) that for $\eta$-a.e. $\gamma \in \mathscr P$ we have
\begin{equation*}
\int_0^T \vert b(\gamma(t))\vert  \, dt = 0.
\end{equation*}
This gives $\vert b(\gamma(t))\vert =0$ for a.e. $t\in [0,T]$ and this contradicts the definition of $\mathsf T_{B}$. Hence $\eta(\mathscr P)=0$.
\end{proof}

\section{Locality of the divergence}
\label{s:new-six}

In this section we prove that the if $\dive(ub)$ is a measure, then it is $0$ on the set 
\begin{equation}\label{eq:set_M}
M:= \bigg\{x \in \R^2: b(x) = 0,\, x \in \mathcal D_{b} \text{ and } \nabla^{\rm appr} b(x) = 0 \bigg\},
\end{equation}
where $\mathcal D_{b}$ is the set of approximate differentiability points and $\nabla^{\rm appr} b$ is the approximate differential, according to Definition \cite[Def. 3.70]{AFP}. For shortness, we will call this property \emph{locality of the divergence}.

Let $U$ be an open set in $\R^d$, $d\in \N$.
The main result of this section is the following

\begin{proposition}\label{prop:locality} $u \in L^{\infty}(U)$ \text{and} suppose that $\dive (ub) =\lambda $ in the sense of distributions, where $\lambda$ is a Radon measure on $U$. Then $\vert \lambda \vert \rest M = 0$. 
\end{proposition}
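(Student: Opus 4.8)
The plan is to prove locality of the divergence by a blow-up argument combined with the approximate differentiability of $b$ at points of $M$. Recall that at every point $x_0 \in M$ we have $b(x_0) = 0$ and $\nabla^{\mathrm{appr}} b(x_0) = 0$; by definition of approximate differential this means that the rescaled fields
\begin{equation*}
b_r(y) := \frac{b(x_0 + r y)}{r}
\end{equation*}
converge to $0$ in $L^1_{\mathrm{loc}}$ as $r \to 0$, for $\L^2$-a.e. $x_0 \in M$ (more precisely for every approximate differentiability point with vanishing differential, which is a full-measure subset of $M$). Since $\lambda = \dive(ub)$ and $\|u\|_\infty < \infty$, one controls $\lambda$ on small balls: testing the equation against a cutoff $\varphi$ supported in $B(x_0, 2r)$ and equal to $1$ on $B(x_0, r)$ with $|\nabla \varphi| \lesssim 1/r$ gives
\begin{equation*}
|\lambda|(B(x_0, r)) \lesssim \|u\|_\infty \frac{1}{r} \int_{B(x_0, 2r)} |b(y)|\, dy = \|u\|_\infty \, r \int_{B(0,2)} |b_r(z)|\, dz,
\end{equation*}
using the change of variables $y = x_0 + rz$. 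Here I am cheating slightly: testing against a single cutoff only controls the \emph{net} mass $\lambda(B(x_0,r))$, not the total variation $|\lambda|(B(x_0,r))$; to get the total variation one decomposes $B(x_0,r)$ into the positive and negative parts of $\lambda$ and approximates their indicators by test functions, or alternatively one works directly with the known estimate for the total variation of a divergence in terms of the normal trace. Either way the bound $|\lambda|(B(x_0,r)) \lesssim r \int_{B(0,2)} |b_r|$ holds up to a dimensional constant.

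Next I would combine this with the blow-up. By the definition of $M$ (and the remark identifying $M$ with $\{b=0\}$ modulo $\L^2$ via \cite[Prop. 4.2]{ABC3}, which guarantees approximate differentiability a.e.), for $\L^2$-a.e. $x_0 \in M$ the quantity $\fint_{B(0,2)} |b_r(z)|\,dz \to 0$ as $r \to 0$. Hence for such $x_0$,
\begin{equation*}
\limsup_{r \to 0} \frac{|\lambda|(B(x_0,r))}{r^2} \lesssim \limsup_{r\to 0} \frac{1}{r}\fint_{B(0,2)} |b_r(z)|\,dz = 0,
\end{equation*}
which is a statement far stronger than what is needed: it says the upper $2$-density of $|\lambda|$ vanishes at $x_0$. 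Actually we want to conclude $|\lambda|\rest M = 0$, so the cleanest route is: for $\L^2$-a.e. $x_0 \in M$, $\lim_{r\to 0} |\lambda|(B(x_0,r))/r^2 = 0$, i.e. the density of $|\lambda|$ with respect to $\L^2$ vanishes $\L^2$-a.e. on $M$. Since $M$ is (equivalent to) a set of finite measure and the statement is local, a standard differentiation/covering argument (Besicovitch) then yields that $|\lambda|$ restricted to the set where this density vanishes assigns zero mass; combined with the fact that the complement of the good points inside $M$ is $\L^2$-null, and that $|\lambda|$ on an $\L^2$-null set must be handled separately, one needs a little care.

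This last point is where the main obstacle lies: one must rule out that $|\lambda|$ has a nontrivial \emph{singular} part (w.r.t. $\L^2$) sitting on $M$. The density argument above shows $|\lambda| \rest M \ll \L^2$ cannot happen with nonzero absolutely continuous density, but a priori $|\lambda|$ could be singular and concentrated on an $\L^2$-null subset of $M$. To exclude this, I would instead run the blow-up estimate at $|\lambda|^{\mathrm{sing}}$-a.e. point: by the Besicovitch derivation theorem, at $|\lambda|^{\mathrm{sing}}$-a.e. point $x_0$ one has $|\lambda|(B(x_0,r))/\L^2(B(x_0,r)) \to +\infty$, whereas the estimate $|\lambda|(B(x_0,r)) \lesssim r \int_{B(0,2)}|b_r|$ forces this ratio to stay bounded (in fact tend to $0$) at every approximate-differentiability point of $b$ with vanishing differential — and $\L^2$-a.e. point of $M$ is such. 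Since $|\lambda|^{\mathrm{sing}}$ restricted to $M$ would have to be concentrated on the $|\lambda|^{\mathrm{sing}}$-generic points, which lie in $M$ only on an $\L^2$-null set, one checks that $b$ is still approximately differentiable with zero differential at $|\lambda|^{\mathrm{sing}}\rest M$-a.e. point (here one uses that approximate differentiability a.e. w.r.t.\ $\L^2$ plus the structure of $M$ transfers to a.e.\ point w.r.t.\ any measure absolutely continuous in the relevant sense — this is the delicate measure-theoretic step and may require invoking that $b\in\BV$ so that $Db$ has no part singular w.r.t.\ $\L^2$ concentrated where $b=0$, cf.\ the locality results of \cite[Prop. 3.73]{AFP}). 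Reaching the contradiction $\infty \le C < \infty$ then gives $|\lambda|^{\mathrm{sing}}(M) = 0$, and together with the absolutely continuous part this yields $|\lambda|\rest M = 0$. I expect the bulk of the write-up to be this reconciliation of the two derivation theorems (for $\L^2$ and for $|\lambda|$) with the blow-up of $b$, while the cutoff estimate itself is routine.
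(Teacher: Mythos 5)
The core mechanism you identify --- that $\fint_{B_r}|b| = o(r)$ at points of $M$ forces the divergence of $ub$ to be small on small balls --- is the same one the paper uses, but your route has a genuine gap. The estimate $|\lambda|(B(x_0,r)) \lesssim \frac{1}{r}\int_{B(x_0,2r)}|b|$ for the \emph{total variation} is false, and neither of your proposed fixes repairs it. A cutoff (or, as in the paper, the normal trace on $\partial B_s$ for a good radius $s\in[r,2r]$) controls only the \emph{net} mass $\lambda(B_s)$; approximating the indicators of the positive and negative sets of $\lambda$ by test functions destroys the gradient bound, and the general trace estimate for divergence-measure fields bounds $|\lambda|(B_r)$ by $\|b\|_{L^\infty(B_{2r})}\,r^{d-1}$, which need not decay at a point of $M$. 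Indeed the intermediate claim that the upper $d$-density of $|\lambda|$ vanishes at every point of $M$ is simply wrong: in one dimension take $u\equiv 1$ and $b=\sum_n 2^{-n}\1_{[2^{-n},\,2^{-n}+8^{-n}]}$; then $0\in M$ since $\fint_{-r}^{r}|b|=o(r)$, yet $|\lambda|((-r,r))\sim r$. (The proposition survives because $|\lambda|$ is carried by the jump set, which is disjoint from $M$, and the net mass $\lambda((-r,r))$ is small by cancellation.) To salvage your derivation-theorem strategy you would need the extra observation that at $|\lambda|$-a.e.\ point $\lambda(B_r)/|\lambda|(B_r)\to\pm1$, so that the net-mass bound transfers to $|\lambda|$ at $|\lambda|$-a.e.\ point of $M$, followed by a density comparison theorem; that repair is absent.

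Your second difficulty --- transferring approximate differentiability from $\L^{2}$-a.e.\ point of $M$ to $|\lambda|^{\rm sing}\rest M$-a.e.\ point --- is a non-issue, and the justification you sketch (that $Db$ has no singular part concentrated where $b=0$) is unfounded. The set $M$ is \emph{defined} in \eqref{eq:set_M} as the set of points where $b$ vanishes, is approximately differentiable, \emph{and} has zero approximate differential, so $\fint_{B_r(x)}|b|=o(r)$ holds at \emph{every} $x\in M$, including any $\L^{2}$-null part. This is exactly what the paper exploits: it takes an \emph{arbitrary} subset $S\subset M$, covers it by disjoint balls via Besicovitch--Vitali, bounds only the net mass $\lambda(B_i)$ of each ball through the Fubini theorem for traces, and concludes $\lambda(S)=0$ for every such $S$, which yields $|\lambda|\rest M=0$ with no decomposition of $\lambda$ into absolutely continuous and singular parts at all.
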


Note that we do not assume any weak differentiability of $u$ or $ub$, so the conclusion of Proposition~\ref{prop:locality}
does not follow immediately from the standard locality properties of the approximate derivative (see e.g. \cite{AFP}, Proposition~3.73). Moreover, we also mention a related counterexample (contained in \cite{ABC2}), where the authors construct a bounded vector field $V$ on the plane whose (distributional) divergence belongs to $L^\infty$, is non-trivial, and is supported in the set where $V$ vanishes.
Our proof is based on Besicovitch-Vitali covering Lemma (\cite[Thm. 2.19]{AFP}) and uses some basic facts about the trace properties of $L^{\infty}$ vector fields whose divergence is a measure (we refer to \cite{ChenFrid,ambcriman04} or \cite{camillonote}). In particular, we recall the following Theorem (for the proof, see \cite[Prop 7.10]{camillonote}):

\begin{theorem}[Fubini's Theorem for traces] Let $\Omega \subset \R^d$ be an open set and $B \in L^{\infty}_{\rm{loc}}(\Omega, \R^{d})$ be a vector field whose distributional divergence $\dive B =: \mu$ is a Radon measure with locally finite variation in $\Omega$. Let $F \in C^{1}(\Omega)$. Then for a.e. $t \in \R$ we have 
\begin{equation}\label{eq:fubini_traces} 
\text{Tr}(B, \partial \{F > t \}) = B \cdot \nu \qquad \H^{d-1}\text{-a.e. on } \Omega \cap \partial \{F > t \}, 
\end{equation}
 where $\nu$ denotes the exterior unit normal to $\partial \{F>t\}$ and the distribution $\text{Tr}(B, \partial \Omega^{\prime})$ is defined by 
\begin{equation*}
\langle \text{Tr}(B, \partial \Omega^{\prime}), \phi \rangle := \int_{ \Omega^{\prime}} \phi \, d \mu + \int_{\Omega^{\prime}} \nabla \phi \cdot B \, dx , \qquad \forall \phi \in C_{c}^{\infty}(\Omega).  
\end{equation*}
for every open subset $\Omega^{\prime} \subset \Omega$ with $C^{1}$ boundary. \end{theorem}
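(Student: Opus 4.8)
The plan is to prove the identity by regularization combined with the coarea formula. First I would fix a standard mollifier $(\varrho_\varepsilon)_\varepsilon$ on $\R^d$ and set $B_\varepsilon := B * \varrho_\varepsilon$ on $\Omega_\varepsilon := \{x\in\Omega : \mathrm{dist}(x,\partial\Omega)>\varepsilon\}$; then $\dive B_\varepsilon = \mu * \varrho_\varepsilon =: \mu_\varepsilon \in C^\infty(\Omega_\varepsilon)$, the measures $\mu_\varepsilon\,\L^d$ converge to $\mu$ weakly$^\star$ and are uniformly bounded in total variation on compact subsets of $\Omega$, while $B_\varepsilon\to B$ in $L^p_{\mathrm{loc}}(\Omega)$ for every $p<\infty$ and, along a subsequence, $\L^d$-a.e.\ to the precise (Lebesgue-point) representative $\tilde B$. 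Applying the coarea formula for the locally Lipschitz function $F$ to $g=\1_{\{\nabla F=0\}}$ gives $\H^{d-1}(\{F=t\}\cap\{\nabla F=0\})=0$ for a.e.\ $t$; for such $t$, $\H^{d-1}$-a.e.\ point of $\{F=t\}$ is a regular point of $F$, so there $\{F=t\}$ is locally a $C^1$ hypersurface, $\{F>t\}$ has locally finite perimeter and its essential boundary agrees, up to $\H^{d-1}$-null sets, with $\{F=t\}$, with exterior normal $\nu=-\nabla F/|\nabla F|$.

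The crucial step will be to transfer the bulk convergence $B_{\varepsilon_k}\to\tilde B$ onto the level sets $\{F=t\}$, which are $\L^d$-negligible. Fixing a compact exhaustion $\{K_m\}$ of $\Omega$ and applying the coarea formula to $g=|B_{\varepsilon_k}-B_{\varepsilon_j}|$ one obtains
\[
\int_{\R}\Big(\int_{\{F=t\}\cap K_m}|B_{\varepsilon_k}-B_{\varepsilon_j}|\,|\nabla F|\,d\H^{d-1}\Big)\,dt=\int_{K_m}|B_{\varepsilon_k}-B_{\varepsilon_j}|\,|\nabla F|\,dx,
\]
and the right-hand side tends to $0$ as $k,j\to\infty$ since $(B_\varepsilon)$ is Cauchy in $L^1(K_m)$ and $|\nabla F|$ is bounded on $K_m$. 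Hence, after a diagonal extraction of a single subsequence (still denoted $\varepsilon_k$), for a.e.\ $t$ the sequence $(B_{\varepsilon_k})$ is Cauchy in $L^1(\{F=t\}\cap K_m,\,|\nabla F|\,\H^{d-1})$ for every $m$; since $|\nabla F|>0$ $\H^{d-1}$-a.e.\ on $\{F=t\}$ for a.e.\ $t$ (split off the subsets $\{|\nabla F|\ge\delta\}$ and let $\delta\downarrow 0$), it converges in $L^1_{\mathrm{loc}}(\{F=t\},\H^{d-1})$, and the same coarea argument applied to $\1_{\{B_{\varepsilon_k}\not\to\tilde B\}}$ identifies the limit with the restriction of $\tilde B$. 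In addition, for a.e.\ $t$ one has $|\mu|(\{F=t\})=0$ (only countably many levels can be charged by a locally finite measure) and $\H^{d-1}(\{F=t\}\cap K_m)<\infty$ for all $m$.

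Then I would collect all these a.e.-in-$t$ conditions into a single full-measure set of levels $t$. For $t$ in this set and any $\varphi\in C_c^\infty(\Omega)$, using $B_{\varepsilon_k}\to B$ in $L^1_{\mathrm{loc}}$, $\mu_{\varepsilon_k}\L^d\rightharpoonup\mu$, and $|\mu|(\partial\{F>t\})=0$,
\[
\langle \text{Tr}(B,\partial\{F>t\}),\varphi\rangle=\int_{\{F>t\}}\varphi\,d\mu+\int_{\{F>t\}}\nabla\varphi\cdot B\,dx=\lim_{k\to\infty}\int_{\{F>t\}}\dive(\varphi B_{\varepsilon_k})\,dx.
\]
Since $\varphi B_{\varepsilon_k}$ is $C^1$ with compact support in $\Omega_{\varepsilon_k}$ for $k$ large and $\{F>t\}$ has locally finite perimeter, the Gauss--Green formula for sets of finite perimeter (see \cite{AFP}) yields $\int_{\{F>t\}}\dive(\varphi B_{\varepsilon_k})\,dx=\int_{\{F=t\}}\varphi\,B_{\varepsilon_k}\cdot\nu\,d\H^{d-1}$; the $L^1(\{F=t\}\cap\mathrm{supp}\,\varphi,\H^{d-1})$ convergence of $B_{\varepsilon_k}$ to $\tilde B$ together with $|\nu|=1$ let one pass to the limit on the right, so $\langle\text{Tr}(B,\partial\{F>t\}),\varphi\rangle=\int_{\{F=t\}}\varphi\,\tilde B\cdot\nu\,d\H^{d-1}$ for all such $\varphi$. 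This is precisely the asserted identity $\text{Tr}(B,\partial\{F>t\})=B\cdot\nu$ $\H^{d-1}$-a.e.\ on $\partial\{F>t\}$.

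The main obstacle is exactly this transfer of information from the bulk to the slices: mollification only yields $L^1_{\mathrm{loc}}(\Omega)$ convergence, which a priori says nothing on the $\L^d$-negligible hypersurfaces $\{F=t\}$, and the coarea formula is the device that makes it hold on a.e.\ slice — at the price of passing to a subsequence and discarding a (Lebesgue-null) set of bad levels, which is the reason the statement holds only for a.e.\ $t$. The remaining work is essentially bookkeeping: controlling the several exceptional null sets of levels (critical levels of $F$, levels charged by $|\mu|$, levels where the coarea-Cauchy property or the local finiteness of $\H^{d-1}$ fails) and extracting one subsequence valid simultaneously for a.e.\ $t$.
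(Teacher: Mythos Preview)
The paper does not give its own proof of this theorem; it merely quotes the statement and refers to \cite[Prop.~7.10]{camillonote} for the argument. Your proposal is correct and is, in fact, essentially the standard proof one finds in that reference: mollify $B$, use the coarea formula for $F$ to upgrade the bulk $L^1_{\mathrm{loc}}$ convergence $B_\varepsilon\to B$ to $L^1(\H^{d-1}\rest\{F=t\})$-convergence on almost every level set, apply the classical Gauss--Green formula to the smooth compactly supported field $\varphi B_\varepsilon$ on $\{F>t\}$, and pass to the limit on both sides. Your handling of the technicalities --- extracting a single subsequence by a diagonal/summable-Cauchy argument over a compact exhaustion, discarding the at most countably many levels with $|\mu|(\{F=t\})>0$, and using the coarea identity for $g=\1_{\{\nabla F=0\}}$ to discard critical levels --- is accurate, and your identification of the slice limit with the precise representative $\tilde B$ via coarea applied to the $\L^d$-null set $\{B_{\varepsilon_k}\not\to\tilde B\}$ is exactly the right mechanism.
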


Furthermore, we will use the following elementary
\begin{lemma}\label{lemma:camillo_192} Let $G \colon \R^{d} \to \R$ be a bounded, Borel function. For every $r>0$ there exists a set of positive measure of real numbers $s=s(r) \in [r,2r]$ such that
\begin{equation*}
\int_{\partial B_{s(r)}} \vert G(x)\vert \, d\H^{d-1}(x) \le \frac{1}{r} \int_{B_{2r}} \vert G(y) \vert \,dy.
\end{equation*}
\end{lemma}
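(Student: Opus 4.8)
This is an averaging statement: among the radii $s\in[r,2r]$, the average over $\partial B_s$ of $|G|$ is, on average over $s$, at most a constant times the bulk average over $B_{2r}$, so the desired inequality must hold on a set of $s$ of positive measure. First I would write the coarea/slicing identity
\begin{equation*}
\int_{r}^{2r}\left(\int_{\partial B_s}|G(x)|\,d\H^{d-1}(x)\right)ds = \int_{B_{2r}\setminus B_r}|G(y)|\,dy \le \int_{B_{2r}}|G(y)|\,dy,
\end{equation*}
which holds because the Euclidean distance function $x\mapsto|x|$ has unit gradient, so that Fubini/coarea converts the spherical integrals into the integral over the annulus. I would then define
\begin{equation*}
A := \left\{ s\in[r,2r] : \int_{\partial B_s}|G(x)|\,d\H^{d-1}(x) \le \frac{1}{r}\int_{B_{2r}}|G(y)|\,dy \right\}
\end{equation*}
and argue by contradiction: if $\L^1(A)=0$, then for a.e. $s\in[r,2r]$ the reverse strict inequality $\int_{\partial B_s}|G|\,d\H^{d-1} > \tfrac1r\int_{B_{2r}}|G|$ holds, and integrating this over $s\in[r,2r]$ (an interval of length $r$) gives $\int_{B_{2r}\setminus B_r}|G|\,dy > \int_{B_{2r}}|G|\,dy$, contradicting the slicing identity above (unless $\int_{B_{2r}}|G|=0$, in which case the stated inequality holds trivially for every $s$).

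The only points requiring a little care are measurability of $s\mapsto\int_{\partial B_s}|G|\,d\H^{d-1}$ (which follows from Fubini's theorem applied to the Borel function $G$ in polar coordinates) and the degenerate case $\int_{B_{2r}}|G|\,dy=0$, which I would simply dispose of at the outset. I do not expect any genuine obstacle here: the statement is an elementary pigeonhole/averaging consequence of Fubini in polar coordinates, and the proof is a couple of lines once the slicing identity is written down.
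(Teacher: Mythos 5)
Your argument is correct: the polar-coordinates slicing identity $\int_r^{2r}\bigl(\int_{\partial B_s}|G|\,d\H^{d-1}\bigr)\,ds=\int_{B_{2r}\setminus B_r}|G|\,dy\le\int_{B_{2r}}|G|\,dy$ plus the averaging/contradiction step (integrating the reverse strict inequality over an interval of length $r$) is exactly the elementary pigeonhole argument the paper relies on — the lemma is stated there without proof precisely because this is the standard route. Your handling of the degenerate case $\int_{B_{2r}}|G|=0$ and of the measurability of $s\mapsto\int_{\partial B_s}|G|\,d\H^{d-1}$ via Fubini--Tonelli closes the only points that needed care.
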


\begin{proof}[Proof of Proposition \ref{prop:locality}]
Fix an arbitrary $x \in M$. For brevity let $B_{r} :=B_{r}(x)$. By \eqref{eq:fubini_traces} with $F(y):= \vert x - y \vert^{2}$,
there exists an $\L^1$-negligible set $N_x$ such that for any positive number $r\notin N_x$ we have
\begin{equation*}
\vert \lambda (B_{r}) \vert = \left\vert \int_{\partial B_{r}} u b \cdot \nu \, d\H^{d-1} \right\vert \le C  \int_{\partial B_{r}} \vert b \vert \, d\H^{d-1},
\end{equation*}
where $\nu$ denotes the exterior unit normal to $\partial B_{r}$.
By Lemma \ref{lemma:camillo_192}
\begin{equation*}
C \int_{\partial B_{r}} \vert b \vert \, d\H^{d-1} \le \frac{C}{r}\int_{B_{2r}} \vert b(x) \vert \, dx = o(r^{d})
\end{equation*} 
because, by definition of $M$, we have $\fint_{B_{r}} \vert b\vert \, dx = o(r)$. Therefore
\begin{equation}\label{eq:locality-estimate-old}
 \vert \lambda (B_{r}) \vert = o(r^{d}).
\end{equation}

Fix $\e > 0$. By \eqref{eq:locality-estimate-old} for any $x\in M$ there exists $\delta_x>0$ such that for any positive number $r<\delta_x$ such that $r\notin N_x$ we have
\begin{equation}\label{eq:locality-estimate}
 \vert \lambda (B_{r}(x)) \vert \le \e r^{d}.
\end{equation}

Let $S \subset M$ be an arbitrary bounded subset.

By regularity of $\lambda$, there exists a bounded open set $O \supset S$ such that $\vert \lambda \vert (O \setminus S) < \e$. Hence, for any $x \in S$ there exists $\rho_{x}>0$ such that $B(x, r) \subset O$ for any positive number $r < \rho_{x}$.
Consequently
\begin{equation*}
\mathscr F := \big\{ B(x,r): x \in S, r < \min(\rho_x, \delta_x), r \notin N_x \big\}
\end{equation*}
is a fine covering of $S$.

Hence we can apply Besicovitch-Vitali covering Lemma (\cite[Thm. 2.19]{AFP}): there exists a countable disjoint subfamily $\{B_{i}\}_{i \in \N} \subset \mathscr F$ such that 
\begin{equation*}
 \vert \lambda \vert \left( S \setminus \bigcup_{i} B_{i} \right) = 0.
\end{equation*}
 On the other hand, since $\bigcup_{i} B_{i} \subset O$ by construction, we have 
\begin{equation*}
 \vert \lambda \vert \left( \bigcup_{i} B_{i} \setminus S \right) < \e.
\end{equation*}

Using \eqref{eq:locality-estimate}, since the balls $B_{i}$ are disjoint, we have 
\begin{equation*}
\lambda\left( \bigcup_{i} B_{i} \right) = \sum_{i} \lambda(B_{i}) \le \e \L^{2}\left( \bigcup_{i} B_{i} \right). 
\end{equation*}
Hence
\begin{equation*}
 \lambda(S) = \lambda \left( \bigcup_{i} B_{i} \right) - \lambda \left( \bigcup_{i} B_{i} \setminus S\right) \to 0
\end{equation*}
as $\e \downarrow 0$. Hence $\lambda \rest S=0$ and, by arbitrariness of $S \subset M$, $\lambda \rest M =0$.
\end{proof}

\subsection{Comparison between $\L^{2}$ and $\eta$}\label{s:new-seven}

We present here two general lemmas which relate the Lebesgue measure $\L^{2}$ and the measure $\eta$ and are based on nearly incompressibility of the vector field $b$.

\begin{lemma} \label{lemma:eta_lebesgue}Let $A \subset \R^2$ be a measurable set. Then $\L^{2}(A)=0$ if and only if $\eta(\Gamma_{A})=0$ where
\begin{equation*}
\Gamma_{A} := \left\{\gamma \in \Gamma: \L^{1}(\{ t \in [0,T]: \gamma(t) \in A\})>0\right\}. 
\end{equation*}
\end{lemma}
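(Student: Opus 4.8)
The plan is to prove the two implications separately, with the forward direction ($\L^2(A)=0 \Rightarrow \eta(\Gamma_A)=0$) being the one that uses near incompressibility in an essential way, and the reverse direction being comparatively soft. For the forward implication, suppose $\L^2(A)=0$. The key identity is the relation \eqref{eq:rho-e-eta}, namely $\rho(t,\cdot)\L^2 = (e_t)_\#\eta$, together with the two-sided bound $\ln\rho\in L^\infty$, which gives $c^{-1}\le \rho(t,x)\le c$ for some constant $c>1$ and a.e.\ $(t,x)$. I would then compute, using Tonelli's theorem and the change of variables through the evaluation map,
\begin{equation*}
\int_\Gamma \L^1\big(\{t\in[0,T]:\gamma(t)\in A\}\big)\, d\eta(\gamma)
= \int_0^T \int_\Gamma \1_A(\gamma(t))\, d\eta(\gamma)\, dt
= \int_0^T \int_{\T^2} \1_A(x)\, \rho(t,x)\, dx\, dt.
\end{equation*}
Since $\rho\le c$ and $\L^2(A)=0$, the right-hand side vanishes, so the nonnegative integrand $\L^1(\{t:\gamma(t)\in A\})$ is zero for $\eta$-a.e.\ $\gamma$, which is precisely the statement $\eta(\Gamma_A)=0$.

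For the reverse implication, suppose $\eta(\Gamma_A)=0$. Running the same chain of equalities as above, the left-hand side is now $0$, so $\int_0^T\int_{\T^2}\1_A(x)\rho(t,x)\,dx\,dt=0$. Using the lower bound $\rho(t,x)\ge c^{-1}>0$ a.e., we conclude $\int_0^T\L^2(A)\,dt = T\,\L^2(A)\le c\int_0^T\int_{\T^2}\1_A\rho\,dx\,dt = 0$, hence $\L^2(A)=0$. Here one should be slightly careful that $\rho$ is only defined for a.e.\ $t$, but the lower bound holds for a.e.\ $(t,x)$ by Fubini, which is enough.

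The only genuine technical point — and the step I expect to require the most care — is justifying the Fubini/Tonelli interchange $\int_\Gamma\int_0^T \1_A(\gamma(t))\,dt\,d\eta = \int_0^T\int_\Gamma \1_A(\gamma(t))\,d\eta\,dt$ and the subsequent identification $\int_\Gamma\1_A(\gamma(t))\,d\eta(\gamma) = \int_{\T^2}\1_A\,d(e_t)_\#\eta = \int_{\T^2}\1_A(x)\rho(t,x)\,dx$. The interchange is legitimate because the integrand $(t,\gamma)\mapsto\1_A(\gamma(t))$ is nonnegative and jointly measurable (measurability in $t$ for fixed $\gamma$ follows from continuity of $\gamma$; joint measurability follows since $(t,\gamma)\mapsto\gamma(t)=e_t(\gamma)$ is continuous on $[0,T]\times\Gamma$, and $A$ is Borel up to $\L^2$-null sets, which we may assume without loss of generality after replacing $A$ by a Borel set), and $\eta$ is a finite measure on $\Gamma$. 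Once this is in place, the lemma follows immediately from the elementary fact that a nonnegative function with zero integral against a positive measure vanishes almost everywhere with respect to that measure.
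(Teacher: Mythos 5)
Your proposal is correct and follows essentially the same route as the paper: both directions rest on the identity $(e_t)_\#\eta=\rho(t,\cdot)\L^2$ combined with Fubini, using the upper bound on $\rho$ for the forward implication and the lower bound $\rho\ge 1/C$ for the converse. The only cosmetic difference is that the paper carries out the forward direction with the restricted measure $\eta\rest\Gamma_A$ and its density $\rho_A$, whereas you integrate directly against the full $\eta$ and invoke $\rho\le c$; the computations are otherwise identical.
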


\begin{proof} Let us prove first that $\L^{2}(A)=0$ implies $\eta(\Gamma_{A})=0$. We denote by $\rho_{A}$ the density such that $\rho_{A}(t,\cdot)\L^{2} = {e_{t}}_{\#} \left(\eta \rest \Gamma_{A}\right)$ and $r_{A}(x):= \int_{0}^{T} \rho_{A}(t,x)\,dt$. We have, using Fubini,
\begin{equation*}
\begin{split}
0 & = \L^{2}(A) = r_{A}\L^{2}(A) = \int_{0}^{T}\int_{\Gamma} \1_{A}(x)\rho_{A}(t,x) \, dx\,dt \\ 
   & = \int_{0}^{T} \int_{\Gamma} \1_{A}(\gamma(t)) \, d\eta(\gamma) \, dt \\
   & = \int_{\Gamma}\int_{0}^{T} \1_{A}(\gamma(t)) \, dt \, d\eta(\gamma) \\
   & = \int_{\Gamma_{A}} \int_{0}^{T} \1_{A}(\gamma(t)) \, dt \, d\eta(\gamma) \\
   & = \int_{\Gamma_{A}} \L^{1} \big( \{ t \in [0,T]: \, \gamma(t) \in A\} \big) \, d\eta(\gamma),
\end{split}
\end{equation*}
hence, $\L^{1}( \{ t \in [0,T]: \, \gamma(t) \in A\} )=0$ for $\eta$-a.e. $\gamma \in \Gamma_{A}$.

For the opposite direction, using that $\rho$ is uniformly bounded from below by $1/C$, we get
\begin{equation*}
\begin{split}
\frac{T}{C}\L^{2}(A) & = \frac{T}{C}\int \1_{A}(x)\,dx = \frac{1}{C}\int_{0}^{T} \int \1_{A}(x)\,dx \,dt  \\ 
   & \le \int_{0}^{T}\int \1_{A}(x)\rho(t,x) \, dx\,dt \\ 
   & = \int_{0}^{T} \int_{\Gamma} \1_{A}(\gamma(t)) \, d\eta(\gamma) \, dt \\
   & = \int_{\Gamma} \int_{0}^{T} \1_{A}(\gamma(t)) \, dt \, d\eta(\gamma) \\
   & = \int_{\Gamma_{A}} \int_{0}^{T} \1_{A}(\gamma(t)) \, dt \, d\eta(\gamma) \\
   & = \int_{\Gamma_{A}} \L^{1}( \{ t \in [0,T]: \, \gamma(t) \in A\} ) \, d\eta(\gamma) = 0. \qedhere
\end{split}
\end{equation*}
\end{proof}

\begin{lemma}\label{lemma:eta_lebesgue_2} We have $\L^{2}(A)=0$ if and only if $\eta(\Gamma^{s}_{A})=0$ for every $s \in [0,T]$.\end{lemma}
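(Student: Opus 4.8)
The statement to prove is Lemma~\ref{lemma:eta_lebesgue_2}: $\L^2(A)=0$ if and only if $\eta(\Gamma^s_A)=0$ for every $s\in[0,T]$, where $\Gamma^s_A=\{\gamma\in\Gamma:\gamma(s)\in A\}$. The natural strategy is to connect this pointwise-in-time condition with the time-integrated condition already handled in Lemma~\ref{lemma:eta_lebesgue}, exploiting that $\eta$ is (by Ambrosio's Superposition Principle, Theorem~\ref{thm:superposition}, together with \eqref{eq:rho-e-eta}) concentrated on integral curves of $b$ and that the density $\rho$ is bounded above and below.

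The direct implication is quick: fix $s\in[0,T]$ and observe that ${(e_s)}_\#\eta=\rho(s,\cdot)\L^2$, so for any Borel $A$ we have $\eta(\Gamma^s_A)=\eta(e_s^{-1}(A))=\int_A\rho(s,x)\,dx$. Since $\L^2(A)=0$ and $\rho(s,\cdot)\in L^\infty$, this integral vanishes, giving $\eta(\Gamma^s_A)=0$. Note this uses only \eqref{eq:rho-e-eta} and not the full strength of the superposition principle.

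For the converse, suppose $\eta(\Gamma^s_A)=0$ for every $s\in[0,T]$. I want to deduce $\eta(\Gamma_A)=0$ and then invoke Lemma~\ref{lemma:eta_lebesgue}. The point is that $\Gamma_A$ is, up to $\eta$-null sets, "reachable" from the slices $\Gamma^s_A$: by Fubini's theorem,
\begin{equation*}
\int_\Gamma \L^1\big(\{t\in[0,T]:\gamma(t)\in A\}\big)\,d\eta(\gamma)
=\int_0^T \eta(\Gamma^t_A)\,dt = 0,
\end{equation*}
using the hypothesis $\eta(\Gamma^t_A)=0$ for (in fact every, a fortiori a.e.) $t$. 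Hence $\L^1(\{t:\gamma(t)\in A\})=0$ for $\eta$-a.e.\ $\gamma$, which is precisely $\eta(\Gamma_A)=0$ by the definition of $\Gamma_A$. Lemma~\ref{lemma:eta_lebesgue} then yields $\L^2(A)=0$. (One should be a little careful that the map $(t,\gamma)\mapsto \1_A(\gamma(t))$ is jointly measurable so that Fubini applies; this follows from continuity of $(t,\gamma)\mapsto \gamma(t)=e_t(\gamma)$ and Borel measurability of $\1_A$, together with completion of $\eta$.)

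The only genuine subtlety — and thus the step I would single out as the main obstacle — is the measurability issue needed to justify the Fubini interchange and to make sense of $t\mapsto\eta(\Gamma^t_A)$ as a measurable function; everything else is a routine chaining of the two facts "$\eta$ pushes forward to $\rho\L^2$ at each time" and Lemma~\ref{lemma:eta_lebesgue}. In fact, the converse direction can be streamlined still further: since ${(e_t)}_\#\eta = \rho(t,\cdot)\L^2$ with $\rho$ bounded below by $1/C$, the hypothesis $0 = \eta(\Gamma^t_A) = \int_A \rho(t,x)\,dx \ge \tfrac{1}{C}\L^2(A)$ already forces $\L^2(A)=0$ for any single $t$ at which \eqref{eq:rho-e-eta} holds — so one does not even need "for every $s$", nor Lemma~\ref{lemma:eta_lebesgue}, for the converse. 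I would present the proof in this shorter form, noting that a single good time $s$ suffices in both directions.
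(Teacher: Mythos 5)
Your proposal is correct, and the streamlined version you settle on at the end is exactly the paper's proof: both directions follow from the identity $\eta(\Gamma^s_A)=\int_A\rho(s,x)\,dx$ given by \eqref{eq:rho-e-eta}, using the upper bound on $\rho$ for the forward implication and the lower bound $\rho\ge 1/C$ for the converse (for which a single time $s$ indeed suffices). The Fubini detour through Lemma~\ref{lemma:eta_lebesgue} is also valid but unnecessary, as you yourself observe.
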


\begin{proof} For direct implication 
\begin{equation*}
\begin{split}
0 & = \L^{2}(A) = \int \1_{A}(x)\rho(s,x) \, dx \\ 
   & = \int_{\Gamma} \1_{A}(\gamma(s)) \, d\eta(\gamma) \\
   & = \int_{\Gamma^{s}_{A}} \1_{A}(\gamma(s))\, d\eta(\gamma) = \eta(\Gamma^{s}_{A}).
\end{split}
\end{equation*}
For the opposite direction,
\begin{equation*}
\begin{split}
\frac{1}{C}\L^{2}(A) & = \frac{1}{C}\int \1_{A}(x)\,dx \\ 
   & \le \int \1_{A}(x)\rho(s,x) \, dx \\ 
   & = \int_{\Gamma} \1_{A}(\gamma(s)) \, d\eta(\gamma) \\
   & = \int_{\Gamma^{s}_{A}} \1_{A}(\gamma(s))\, d\eta(\gamma) = \eta(\Gamma^{s}_{A})=0.
\end{split}
\end{equation*}\end{proof}

We now recall the set $M$, defined in \eqref{eq:set_M} as
\begin{equation*}
M:= \bigg\{x \in \R^2: b(x) = 0,\, x \in \mathcal D_{b} \text{ and } \nabla^{\rm appr} b(x) = 0 \bigg\},
\end{equation*}
and we consider the sets 
\begin{equation*}
\tilde{\Gamma}_{M} := \tilde{\Gamma} \cap \Gamma_{M}
\end{equation*}
and 
\begin{equation*}
\tilde{\Gamma}^{s}_{M} :=\left\{ \gamma \in \tilde{\Gamma}: \gamma(s) \in M \right\}. 
\end{equation*}
Using Proposition \ref{prop:locality}, we can show the following

\begin{lemma}\label{lemma:eta_M} Let $M$ be the set defined in \eqref{eq:set_M} and for every fixed $s \in [0,T]$ let $\tilde \Gamma_M^s := \{\gamma \in \tilde \Gamma \colon \gamma(s) \in M\}$. Then: 
\begin{itemize}
 \item $\eta(\tilde \Gamma_M^s)=0$ for a.e. $s \in [0,T]$; 
 \item $\eta(\tilde{\Gamma}_{M})=0$.
\end{itemize}
\end{lemma}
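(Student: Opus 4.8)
The plan is to reduce both assertions to the locality of the divergence (Proposition~\ref{prop:locality}) combined with the measure-theoretic correspondence between $\L^2$ and $\eta$ established in Lemmas~\ref{lemma:eta_lebesgue} and~\ref{lemma:eta_lebesgue_2}. The key point is that $M$ is Lebesgue-negligible as a subset of $\T^2$: indeed, by the Remark following \eqref{eq:set_M} we have $M = \{b=0\} \mod \L^2$, but that alone is not enough since $\{b=0\}$ may have positive measure. Instead, the right way to see negligibility is through Proposition~\ref{prop:locality} applied with $u\equiv 1$: since $\dive b$ is a measure (being $b\in\BV$, in fact $\dive b \ll \L^2$ would already do, but we only need it to be a Radon measure) we get $|\dive b| \rest M = 0$; however this does not give $\L^2(M)=0$ directly either. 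Let me reconsider: the cleanest route is that $M$ is a subset of the set where $b$ vanishes \emph{and} its approximate differential vanishes, and at a.e. point of $\{b=0\}$ (a.e.\ in the Lebesgue sense) either the point is not a Lebesgue point — impossible a.e. — or $\nabla^{\rm appr}b = 0$ by the locality of the approximate differential. So $M = \{b=0\}\mod \L^2$, and this set can have positive $\L^2$ measure.

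Given that, the statement $\eta(\tilde\Gamma_M^s)=0$ for a.e.\ $s$ cannot follow merely from $\L^2$-negligibility of $M$ (which fails). So the actual argument must use that trajectories through $M$ spend zero time moving. First I would handle the \textbf{first bullet}. Fix the representative so that $\gamma(t) = \gamma(0) + \int_0^t b(\gamma(\tau))\,d\tau$ for $\eta$-a.e.\ $\gamma$ (Superposition Principle). For a curve $\gamma\in\tilde\Gamma_M^s$, the point $\gamma(s)$ lies in $M\subset\{b=0\}$. The idea is that if $\gamma(s)\in\{b=0\}$ for a \emph{positive measure} set of curves for positively many $s$, then by the correspondence \eqref{eq:rho-e-eta} the curves spend positive total time in $\{b=0\}$, hence by Lemma~\ref{lemma:eta_lebesgue}-type reasoning this forces $\gamma$ to have $b(\gamma(t))=0$ for a.e.\ $t$ on a positive-measure time set. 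But this, together with the integral equation, says $\gamma$ is locally constant on that time set — which does not immediately contradict $\gamma\in\tilde\Gamma$ ($\gamma$ can move during other time intervals). So the honest argument must instead invoke Proposition~\ref{prop:locality} to control the \emph{flux}, i.e.\ to show that $\eta$-a.e.\ moving curve, once it enters $M$, is trapped there (since the vector field and its derivative vanish, the unique trajectory through a point of $M$ is the constant one), and hence a moving curve cannot pass through $M$ at all.

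Concretely, the plan for the first bullet is: (1) by Lemma~\ref{lemma:eta_lebesgue_2} and a Fubini argument in $s$, it suffices to show $\int_0^T \eta(\tilde\Gamma_M^s)\,ds = 0$, i.e.\ $\int_0^T\!\!\int_{\tilde\Gamma} \1_M(\gamma(s))\,d\eta\,ds = 0$, equivalently $\int_{\tilde\Gamma}\L^1(\{s\colon \gamma(s)\in M\})\,d\eta = 0$; (2) suppose not; then a positive-$\eta$-measure set $G\subset\tilde\Gamma$ of moving curves each spends positive time in $M$; (3) consider the sub-measure $\eta\rest G$ and the associated density $\varrho$ and potential: on the set where $\varrho>0$, $b = 0$, $\nabla^{\rm appr}b=0$, so by the locality of the divergence applied to a suitable $u b$ (or directly by uniqueness of characteristics through points of $M$, using that $b$ is approximately differentiable and vanishes there) the trajectory staying in $M$ for positive time must be constant on a neighborhood of those times, and in fact must be globally constant because the set of times where $\gamma\in M$ is closed-from-the-structure and the ODE pins it down — giving $\gamma\in\dot\Gamma$, a contradiction with $G\subset\tilde\Gamma$. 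I expect \textbf{the main obstacle} to be step~(3): turning "approximate differential of $b$ vanishes at $x\in M$" into genuine uniqueness/rigidity of the trajectory through $x$, since $b$ is only $\BV$ and the classical Cauchy–Lipschitz theory does not apply; the argument presumably must go through Proposition~\ref{prop:locality} (locality of the divergence), applied to the indicator-type vector field $\1_{\{b\ne 0\}}b$ or to the density of $\eta\rest G$, to quantify that the \emph{amount of mass} that $\eta$-a.e.\ moving curve deposits in $M$ is zero.

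Finally, the \textbf{second bullet}, $\eta(\tilde\Gamma_M)=0$, follows from the first: recall $\tilde\Gamma_M = \tilde\Gamma\cap\Gamma_M = \{\gamma\in\tilde\Gamma\colon \L^1(\{t\colon\gamma(t)\in M\})>0\}$. For $\gamma\in\tilde\Gamma_M$ there is $\L^1$-positively many $t$ with $\gamma(t)\in M$; writing $\tilde\Gamma_M = \bigcup_{q\in\Q\cap[0,T]}\{\gamma\in\tilde\Gamma\colon \L^1(\{t\in[0,T]\colon\gamma(t)\in M\})>0\}$ is not quite the right decomposition — instead I would argue: $\eta(\tilde\Gamma_M) = \eta(\{\gamma\in\tilde\Gamma\colon \L^1(\{t\colon\gamma(t)\in M\})>0\})$, and since by the first bullet $\int_0^T\eta(\tilde\Gamma_M^s)\,ds=0$, Fubini gives $\int_{\tilde\Gamma}\L^1(\{t\colon\gamma(t)\in M\})\,d\eta(\gamma) = 0$, hence $\L^1(\{t\colon\gamma(t)\in M\})=0$ for $\eta$-a.e.\ $\gamma$, which is exactly $\eta(\tilde\Gamma_M)=0$. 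So in fact both bullets are equivalent to the single estimate $\int_{\tilde\Gamma}\L^1(\{t\colon\gamma(t)\in M\})\,d\eta = 0$, and the whole proof reduces to establishing that, which is where Proposition~\ref{prop:locality} must enter.
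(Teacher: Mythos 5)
Your reduction is sound as far as it goes: you correctly observe that $\L^2(M)$ may be positive, so no argument via Lebesgue-negligibility of $M$ can work, and you correctly note that both bullets are equivalent to the single identity $\int_{\tilde\Gamma}\L^1(\{t:\gamma(t)\in M\})\,d\eta=0$ (the paper itself derives the second bullet from the first by exactly the Fubini computation you write down). But the heart of the lemma --- showing $\eta(\tilde\Gamma_M^s)=0$ --- is precisely the step you leave open, and neither of your candidate mechanisms (pointwise uniqueness/rigidity of the characteristic through a point of $M$, or locality applied to $\1_{\{b\ne0\}}b$) is what makes the proof work; the first in particular is hopeless for a merely $\BV$ field. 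The missing idea is a \emph{mass-trapping} argument at the level of densities, and it requires fixing the time slice $s$ rather than working with your set $G=\tilde\Gamma_M$. Set $\eta_M^s:=\eta\rest\tilde\Gamma_M^s$ and let $\rho_M(t,\cdot)\L^2=(e_t)_\#\eta_M^s$. Then $\rho_M$ solves the continuity equation, and integrating it in time between $s$ and $t$ gives $\dive\bigl(b\int_s^t\rho_M(\tau,\cdot)\,d\tau\bigr)=\bigl(\rho_M(t,\cdot)-\rho_M(s,\cdot)\bigr)\L^2$; Proposition~\ref{prop:locality} applied to this equation kills the right-hand side on $M$, so $\rho_M(t,\cdot)=\rho_M(s,\cdot)$ a.e.\ on $M$ for every $t$. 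Combining this with conservation of total mass yields $\int_{\T^2\setminus M}\rho_M(t,\cdot)\,dx=\int_{\T^2\setminus M}\rho_M(s,\cdot)\,dx=0$, the last equality holding because every curve charged by $\eta_M^s$ lies in $M$ at time $s$. Hence $\rho_M(t,\cdot)=0$ off $M$ for all $t$, i.e.\ $\eta_M^s$-a.e.\ curve satisfies $\gamma(t)\in M$ for a.e.\ $t$; since such a curve solves $\dot\gamma=b(\gamma)$ and $b=0$ on $M$, it is constant, contradicting $\tilde\Gamma_M^s\subset\tilde\Gamma$ unless $\eta_M^s=0$.

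Note also that with your set $G$ (moving curves spending positive time in $M$) there is no time at which the associated density is supported in $M$, so the trapping argument has no starting point; the slice-wise restriction to $\tilde\Gamma_M^s$ is essential, and it is exactly the point where the first bullet does real work rather than being merely equivalent to the second.
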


\begin{proof} 
Let us denote by $\eta^{s}_{M} := \eta \rest \tilde{\Gamma}_{M}^{s}$ and consider the Borel function
\begin{equation*}
\rho^s_{M}(t,\cdot)\L^{2} = {e_{t}}_{\#} \eta^{s}_{M}.
\end{equation*}
It is easy to see that $\rho^s_{M}$ solves continuity equation
\begin{equation}
\label{E_cont_rho_M}
\partial_{t} \rho^s_{M} + \dive (\rho^s_{M}b)=0.
\end{equation}
Integrating in time on $[0,t]$ we get 
\begin{equation*}
\dive \left(b  \int_{0}^{t} \rho^s_{M}(\tau , \cdot) d\tau \right)=(\rho^s_{M}(t, \cdot) - \rho^s_{M}(0, \cdot)) \L^{2}.
\end{equation*}
In particular, thanks to Proposition \ref{prop:locality}, we have that 
\begin{equation}
\label{E_const_rho_M}
\big( \rho^s_{M}(t, \cdot) - \rho^s_{M}(0, \cdot) \big) \L^{2} \rest M = 0,
\end{equation}
hence $\rho^s_{M}(t, \cdot) = \rho^s_{M}(0, \cdot)$, for a.e. $x$. Furthermore, integrating in space the continuity equation \eqref{E_cont_rho_M} we get the conservation of mass:
\begin{equation}\label{eq:cons_mass}
\frac{d}{dt} \int_{\R^2} \rho^s_{M}(t,x)\,dx = 0.
\end{equation}
Therefore, using \eqref{E_const_rho_M} and \eqref{eq:cons_mass}, we have 
\begin{equation*}
\begin{split}
\int_{\R^2 \setminus M} \rho^s_{M}(t,x) dx & =  \int_{\R^2} \rho^s_{M}(t,x) dx -  \int_{M} \rho^s_{M}(t,x) dx =  \\
& = \int_{\R^2} \rho^s_{M}(s,x) dx -  \int_{M} \rho^s_{M}(s,x) dx = \int_{\R^2 \setminus M} \rho^s_{M}(s,x) dx = \\
& = \int \1_{\R^2 \setminus M}(\gamma(s))d\eta_{M}(\gamma) = 0,
\end{split} 
\end{equation*}
which gives us $\rho^s_{M}(t, \cdot) = 0$ a.e. on $\R^2 \setminus M$. Hence 
\begin{equation*}
 0 =\int_{0}^{T} \int_{\R^2 \setminus M} \rho^s_{M}(t,x)\, dx =\int_{0}^{T} \int 1_{\R^2 \setminus M}(\gamma(t))\, d\eta^s_{M}(\gamma)\,dt
\end{equation*}
and this implies that $\eta^s_M(\tilde \Gamma^{s}_{M}) = 0$ for $s \in [0,T]$, since $\gamma \in \tilde{\Gamma}$ are not constant functions (by definition) and $b=0$ on $M$. 

Now the second part easily follows from the first one by a Fubini-like argument: indeed, we set 
\begin{equation*}
I:=\int_0^T \eta(\tilde \Gamma_M^s) \, ds = 0.
\end{equation*}
Since $\eta(\tilde \Gamma_M^s) = \int_{\tilde \Gamma} 1_M(\gamma(s)) \, d\eta(\gamma)$ and using Fubini's theorem we get
\begin{equation*}
I=\int_{\tilde \Gamma} \int_0^T 1_M(\gamma(s)) \, ds \, d\eta(\gamma) = 0
\end{equation*}
i.e. $\L^{1} (\{t \in [0,T]: \, \gamma(t) \in M \}) = 0$ for $\eta$-a.e. $\gamma \in \tilde{\Gamma}_{M}$ and this concludes the proof.\end{proof}

\section{Level sets and trajectories II}\label{s:new-seven}

The results obtained in the Section \ref{s:new-six} provide us with a better description of the relantionship between the trajectories $\gamma \in \Gamma_B$ and the level sets of $H_B$, thus improving the results of Section \ref{s:new-five}.

\subsection{Trajectories and level sets coincide up to a translation in time}

Let $B \in \mathscr B$ a fixed ball of the collection and, as usual, let $H_B$ denote its Hamiltonian. Thanks to Lemma \ref{lemma:library-plus}, there exists a $\eta$-negligible set $N$ such that for every $\gamma \in \Gamma_B \setminus N$ the image $\gamma(0,T)$ is contained in a connected component $\mathfrak c$ of a regular level set of $H_B$. 

Recalling \cite[Theorem 2.5(iv)]{ABC2}, there exists a parametrization $\gamma_{\mathfrak c}$ of $\mathfrak c$ with the following properties: 

\begin{itemize}
\item $\gamma_{\mathfrak c} \colon I_{\mathfrak c} \to \R^2$ is a Lipschitz map, where $I_{\mathfrak c}=\nicefrac{\R}{\ell \Z}$ or $I_{\mathfrak c}=[0,\ell]$ for some $\ell>0$ is the domain of $\gamma$; 
\item $\gamma_{\mathfrak c}$ is injective; 
\item $\gamma_{\mathfrak c}^\prime(s) = b(\gamma_{\mathfrak c}(s))$ for $\L^1$-a.e. $s \in I_{\mathfrak c}$. 
\end{itemize}

Thus it makes sense to wonder about the relationship between the trajectory $\gamma \in \Gamma_B \setminus N$ and the parametrization $\gamma_{\mathfrak c}$ of the corresponding connected component. The following proposition precises this relation, showing that $\gamma$ and $\gamma_{\mathfrak c}$ coincide up to a translation in time.

\begin{proposition}\label{prop:translation-in-time} Let $N$ be the set given by Lemma \ref{lemma:library-plus} and $\gamma \in \tilde{\Gamma} \setminus N$. Then (a suitable restriction of) $\gamma$ coincides with $\gamma_{\mathfrak c}$ up to a translation in time.\end{proposition}

In order to prove Proposition \ref{prop:translation-in-time}, we need the following auxiliary 

\begin{lemma}\label{lemma:canonical_time_2} Let $\gamma \colon I \to \R^2$ be a solution of the ordinary differential equation 
\begin{equation*}
\gamma^\prime(t) = b(\gamma(t)), \qquad t \in I \subset \R,
\end{equation*}
where $I=[0,T]$ and $\frac{1}{\vert b \vert} \in L^{1}_{\rm loc}(\H^{1} \rest \gamma(I))$. Assume that there exists a injective curve $\hat{\gamma}$ defined on $I$ such that $\gamma(I) \subset \hat{\gamma}(I)$ and that $\dot{\hat{\gamma}}=b(\hat{\gamma})$. Then 
\begin{equation*}
\int_{\gamma\left([0,T]\right)} \frac{d\H^{1}(w)}{\vert b(w)\vert} = T - \L^{1} \big( \big\{ t \in [0,T]: \, \gamma^{\prime}(t) = 0 \big\} \big). 
\end{equation*}
\end{lemma}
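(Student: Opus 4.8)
The plan is to evaluate the left-hand side by the area formula for the Lipschitz curve $\gamma$, after discarding the times at which $\gamma$ does not move, and to extract the only nontrivial point --- that $\gamma$ traces its image essentially injectively --- from the hypothesis that an injective integral curve $\hat{\gamma}\supseteq\gamma$ exists. Since $\gamma$ solves the ODE we have $\gamma'=b\circ\gamma$ a.e.\ on $I$, so $\{\gamma'=0\}=\{b\circ\gamma=0\}$ up to an $\L^1$-null set; set $A:=\{\tau\in(t_0,t):b(\gamma(\tau))\neq0\}$. A point $w$ with $b(w)\neq0$ lies in $\gamma((t_0,t))$ if and only if it lies in $\gamma(A)$, hence $\gamma((t_0,t))\setminus\gamma(A)\subseteq\{b=0\}\cap\gamma(I)$, and the latter set is $\H^1$-negligible because $1/|b|\in L^1(\H^1\rest\gamma([t_0,t]))$ (the set $\gamma([t_0,t])$ being compact); therefore $\int_{\gamma((t_0,t))}\frac{d\H^1}{|b|}=\int_{\gamma(A)}\frac{d\H^1}{|b|}<\infty$. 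Using $|\gamma'|=|b(\gamma)|\neq0$ a.e.\ on $A$ together with the area formula applied to $\gamma\rest A$ with the Borel weight $w\mapsto 1/|b(w)|$,
\[
\L^1(A)=\int_A\frac{|\gamma'(\tau)|}{|b(\gamma(\tau))|}\,d\tau=\int_{\gamma(A)}\frac{\#\bigl(\gamma^{-1}(w)\cap A\bigr)}{|b(w)|}\,d\H^1(w).
\]
As $\L^1(A)=(t-t_0)-\L^1\bigl(\{\tau\in(t_0,t):\gamma'(\tau)=0\}\bigr)$ and $\#(\gamma^{-1}(w)\cap A)\ge 1$ for every $w\in\gamma(A)$, the claimed identity is \emph{equivalent} to $\#(\gamma^{-1}(w)\cap A)=1$ for $\H^1$-a.e.\ $w\in\gamma(A)$, i.e.\ to the assertion that $\gamma\rest A$ is injective up to an $\H^1$-null set of values.

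To prove this I would use $\hat{\gamma}$. Being a continuous injection of the compact set $I$, it is a homeomorphism onto its image, so $\theta:=\hat{\gamma}^{-1}\circ\gamma\colon I\to I$ is well defined and continuous and $\gamma=\hat{\gamma}\circ\theta$; if $\gamma(s_1)=\gamma(s_2)$ with $s_1<s_2$ then $\theta(s_1)=\theta(s_2)$. The crux is that \emph{$\theta$ is monotone}. Granting this, $\theta(s_1)=\theta(s_2)$ forces $\theta$, hence $\gamma$, to be constant on $[s_1,s_2]$, hence $\gamma'=0$ a.e.\ on $(s_1,s_2)$ and in particular $b(\gamma(s_1))=0$; so a point $w$ with $b(w)\neq0$ cannot have two preimages in $A$, which is precisely the injectivity required above.

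The main obstacle is therefore the monotonicity of $\theta$, and this is where the equation $\dot{\hat{\gamma}}=b(\hat{\gamma})$ enters. If $\theta$ were not monotone, the intermediate value theorem would produce $s_1<s_2$ with $\theta(s_1)=\theta(s_2)=:v$ and $\theta$ non-constant on $[s_1,s_2]$; then $\gamma\rest[s_1,s_2]$ begins and ends at $w:=\hat{\gamma}(v)$ while covering a nondegenerate sub-arc of the embedded curve $\hat{\gamma}(I)$, so at a time $\rho\in(s_1,s_2)$ realizing an extreme value of $\theta$ on $[s_1,s_2]$ the curve ``turns around'' at $p:=\gamma(\rho)\neq w$. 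Taking $\rho$ to be the last such time, $\gamma(\rho+h)=\hat{\gamma}(\theta(\rho+h))$ lies, for all small $h>0$, on the one fixed side of $p$ along $\hat{\gamma}(I)$ (the one with parameters past $\theta(\rho)$); but $\gamma(\rho+h)-p=\int_{\rho}^{\rho+h}b(\gamma(\tau))\,d\tau$, and along that same side $\hat{\gamma}$ moves with velocity $\dot{\hat{\gamma}}=b(\hat{\gamma})$, so the integrand points toward the \emph{opposite} side of $p$ --- a contradiction as soon as $b(p)\neq0$. Hence every turning point of $\gamma$ lies in the $\H^1$-negligible set $\{b=0\}\cap\gamma(I)$, and a last bookkeeping step --- reparametrizing $\hat{\gamma}$ by arc length, along which $b\neq0$ a.e., and patching the monotone pieces of $\theta$ across the negligible set of stationary points at which $\gamma$ merely pauses --- upgrades this to monotonicity of $\theta$. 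The genuinely delicate point, and what prevents the above from being a one-line argument, is that $b$ is only Borel (of class $\BV$): the ``direction of motion'' comparisons must be carried out in integrated form, at Lebesgue points of $b\circ\gamma$, of $b\circ\hat{\gamma}$, and of the arc-length reparametrization, rather than pointwise.
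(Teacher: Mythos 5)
Your proof is correct and takes essentially the same route as the paper's: drop the $\H^{1}$-negligible set $\{b=0\}\cap\gamma(I)$, apply the area formula with the weight $1/\vert b\vert$, and use $\vert\gamma'\vert=\vert b\circ\gamma\vert$ a.e. The only difference is one of detail: the paper justifies the crucial multiplicity-one step with the single remark that the area formula ``can be applied because there exists $\hat{\gamma}$ by hypothesis'', whereas you actually prove it via monotonicity of $\hat{\gamma}^{-1}\circ\gamma$ --- which is the right way to close that gap, provided (as you indicate) the direction comparison is carried out in integrated form so that a turning point lying in $\{b=0\}$ is also prevented from doubling a positive-measure sub-arc.
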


\begin{proof} Observe that 
\begin{equation*}
\begin{split}
\int_{\gamma\left([0,T] \right)} \frac{d\H^{1}(w)}{\vert b(w)\vert} & \stackrel{(1)}{=} \int_{\gamma\left([0,T]\right)} \frac{\1_{\{b \ne 0\}}(w) \, d\H^{1}(w)}{\vert b(w)\vert} \\ 
& \stackrel{(2)}{=} \int_{\{ t \in [0,T]: \, \gamma^{\prime}(t) \ne 0\}} \frac{\vert \gamma^{\prime}(\tau)\vert}{\vert b (\gamma(\tau))\vert }d\tau \\
& =~ T - \L^{1} \big( \big\{ t \in [0,T]: \, \gamma^{\prime}(t) = 0 \big\} \big),
\end{split}
\end{equation*}
where 
\begin{enumerate}
\item[(1)] follows by definition; 
\item[(2)] is the Area formula, i.e. $\H^{1} \rest C = \gamma_{\#}(\vert \gamma^{\prime} \vert \L^{1})$, where $C=\gamma((0,T))$, which can be applied because there exists $\hat{\gamma}$ by hypothesis.\end{enumerate}
This concludes the proof. \end{proof}

Now we can prove Proposition \ref{prop:translation-in-time}.  

\begin{proof} Let $\overline{t} \in [0,T]$ such that $\gamma_{\mathfrak c}(0)=\gamma(\overline{t})$. By Lemma \ref{lemma:canonical_time_2}, we have that for any $s$ in a suitable subinterval of $[0,T]$ it holds 
\begin{equation}\label{eq:gamma-sono-traslate-in-t}
\int_{\gamma\left([\overline{t}, \overline{t}+s \right])} \frac{d\H^{1}(w)}{\vert b(w)\vert} = (\overline{t}+s) -\overline{t} - \L^{1}([\overline{t},\overline{t}+s] \cap \gamma^{-1}(\{b=0\})).
\end{equation}
By Lemma \ref{lemma:eta_M} and the fact that $\L^2(\{b=0\} \setminus M) = 0$, where $M$ is defined in \eqref{eq:set_M}, we know that for $\eta$-a.e. $\gamma \in \tilde{\Gamma}$, 
\begin{equation*}
\L^{1} \big( \big\{ t \in [0,T]: \, \gamma(t) \in \{b=0\} \big\} \big) = 0, 
\end{equation*}
hence \eqref{eq:gamma-sono-traslate-in-t} is actually 
\begin{equation}\label{eq:canonical-length-gamma}
\int_{\gamma\left([\overline{t}, \overline{t}+s] \right)} \frac{d\H^{1}(w)}{\vert b(w)\vert} = s.
\end{equation}
On the other hand, applying again Lemma \ref{lemma:canonical_time_2} to $\gamma_{\mathfrak c}$, which is injective, we get 
\begin{equation}\label{eq:canonical-length-gamma-a}
\int_{\gamma_{\mathfrak c}(0,s)} \frac{d\H^{1}(w)}{\vert b(w)\vert} = s.
\end{equation}
Since, by definition, $\gamma_{\mathfrak c}(0)=\gamma(\overline{t})$, comparing \eqref{eq:canonical-length-gamma} and \eqref{eq:canonical-length-gamma-a} and using the fact that $\vert b \vert > 0$ $\H^1$-a.e. on $\gamma$, we deduce that 
\begin{equation*}
\gamma(\overline{t} + s) =  \gamma_{\mathfrak c}(s)
\end{equation*}
which means that $\gamma$ (restricted to a suitable time subinterval of $[0,T]$) and $\gamma_{\mathfrak c}$ coincide up to a translation in time. \end{proof}

\subsection{Covering property of the regular level sets}

Let us recall that for each ball $B \in \mathscr B$ and for any rational numbers $s,t \in \Q \cap (0,T)$ with $s<t$ we have set
\begin{equation*}
\mathsf T_{B,s,t} := \left\{ \gamma \in \Gamma_{B}: \,\gamma(s) \notin B, \gamma(t) \notin B\right\}.
\end{equation*}

\begin{remark} In the same way as in Remark \ref{rem:unione-tb-tutto-moving}, we can easily see that
\begin{equation}\label{eq:b-sigma-theta}
\bigcup_{\substack{B \in \B \\s,t \in \Q \cap [0,T]}} \mathsf T_{B,s,t} = \tilde{\Gamma}.
\end{equation}
\end{remark}

For each $B\in \ss B$, $s\in \Q \cap (0,T)$, $t \in \Q \cap (s,T)$ restricting $\eta$ to ${\sf T}_{B,s,t}$, we can construct the local Hamiltonian $H_{B,s,t}$ as in Sections~\ref{Ss_parti_curves}-\ref{ss:disintegration-lebesgue}.

We now set 
\begin{equation}\label{e-E}
\hat E := \bigcup_{\substack{B \in \B \\s,t \in \Q \cap [0,T]}} E_{B,s,t}^\star.
\end{equation}

The following covering property is a global analog of Lemma~\ref{l-E-covers-nonzero-b}:
\begin{lemma} \label{l-global-E-covers-nonzero-b}
It holds that $\hat E \supset \{b \ne 0\} \mod \L^2$.
\end{lemma}

\begin{proof}
Let $P := \{b \ne 0\} \setminus \hat E$. Then for any $B \in \ss B$ it holds that $P \subset \{\nabla H_B = 0\} \mod \L^2$. Since $b \ne 0$ on $P$ and $\nabla H^\perp = r_B b$ it holds that $r_B = 0$ a.e. on $P$ for all $B \in \ss B$. Then for any $B \in \ss B$
\begin{equation*}
\begin{aligned}
0 &= \int_{P \cap B} r_B \, dx \\
  &= \int_0^T\int \1_{P\cap B}(x)\rho_B(t,x) \, dx \, dt \\
  &= \int_{\tilde \Gamma} \int_0^T\1_{P\cap B}(\gamma(t)) \, d\eta(\gamma) \, dt,
\end{aligned}
\end{equation*}
hence $\eta$-a.e. $\gamma \in \tilde \Gamma$ spends zero amount of time in $P\cap B$.
Since $B$ is arbitrary and $\ss B$ is countable, we can generalize this claim to the whole set $P$:
\begin{equation}\label{e-moving-curves-do-not-stay-in-P}
\int_{\tilde \Gamma} \int_0^T \1_P(\gamma(t)) \, dt \, d\eta(\gamma) = 0.
\end{equation}

By nearly incompressibility
\begin{equation*}
\begin{aligned}
\L^2(P) &\stackrel{\phantom{(***)}}{\le} C \int_0^T \int \1_P(x) \rho(t, x) \, dx \, dt \\
        & \stackrel{\phantom{(***)}}{=} C \int_0^T \int_{\dot \Gamma \cup \tilde \Gamma}  \1_P(\gamma(t)) \, d\eta(\gamma) \, dt \\
        & \stackrel{(*)\phantom{**}}{=} C \int_0^T \int_{\dot \Gamma}  \1_P(\gamma(t)) \, d\eta(\gamma) \, dt \\
        & \stackrel{(**)\phantom{*}}{=} C \int_0^T \int_{\dot \Gamma}  \1_P(\gamma(t)) \1_{\{b=0\}}(\gamma(t)) \, d\eta(\gamma) \, dt \\
        & \stackrel{\phantom{(***)}}{\le} C \int_0^T \int \1_P(\gamma(t)) \1_{\{b=0\}}(\gamma(t)) \, d\eta(\gamma) \, dt \\
        & \stackrel{\phantom{(***)}}{\le} C \int_0^T \int \1_P(x) \1_{\{b=0\}}(x) \rho(t,x) \, dx \, dt \\
        & \stackrel{(***)}{=} 0,
\end{aligned}
\end{equation*}
where 
\begin{itemize}
\item (*) holds by \eqref{e-moving-curves-do-not-stay-in-P};
\item (**) holds because $\1_{\{b=0\}}(\gamma(t))=1$ for any $t\in[0,T]$ and any $\gamma \in \dot \Gamma$: indeed, for any $\gamma \in \dot \Gamma$ which is an integral curve of $b$ we have $0=\gamma'(t)=b(\gamma(t))$, hence $\gamma(t)\in \{b=0\}$;
\item (***) holds because $P$ and $\{b=0\}$ are disjoint. \qedhere
\end{itemize}
\end{proof}

In view of Corollary~\ref{c-weak-Sard} the proof above actually leads to a stronger statement:
\begin{lemma}\label{l-E-is-nonzero-b}
$\hat E = \{b \ne 0\} \mod \L^2$.
\end{lemma}

% \begin{remark}
% If we always have WSP then actually $E = \{b\ne 0\}$ since on each $E_{B,s,t}$ it holds that $b \ne 0$ a.e.
% \end{remark}

% \begin{lemma}\label{lemma:b_f_E} We have that 
% \begin{equation*}
% \{f \ne +\infty \} = \{b \ne 0\} \mod \L^{2}.
% \end{equation*}
% \end{lemma}

% \begin{proof} Let us call $E:=\bigcup_{B \in \mathscr B} E_{B}$. On one hand, it is easy to see that 
% \begin{equation}\label{eq:E_b}
% \{b \ne 0\} = E \mod \L^{2}. 
% \end{equation}
% Indeed, thanks to Remark \ref{rem:unione-tb-tutto-moving} and to Lemma \ref{lemma:eta_M}, if $b(x) \ne 0$ then $x \in E_{B}$ for some $B \in \mathscr B$; on the other hand, by Weak Sard Property we have $\L^{2}(E_{B} \cap \{b = 0\}) = 0$ for every $B \in \mathscr B$, hence we have \eqref{eq:E_b}. Now we show that
% \begin{equation}\label{eq:E_f}
%  \{ f \ne +\infty \} = E \mod \L^{2}.
% \end{equation}
% If $x \in \R^2 \setminus E$, then $x$ belongs to a non regular level set for $H_{B}$ for every $B \in \mathscr B$: in particular, for every $n \in \N$, $f_{n}(x) = +\infty$ hence, passing to the limit, $f(x)= +\infty$. The other inclusion is also easy: if $x \in E_{B}$ for some $B\in\mathscr B$ then necessarily it has a label and hence $f(x) \ne +\infty$. The lemma now follows from \eqref{eq:E_b} and \eqref{eq:E_f}.\end{proof}

\section{Solution of the transport equation on integral curves}
\label{s:new-eight}

\subsection{Splitting on the level sets of the time-dependent problem}

We now present the time-dependent version of Lemmas \ref{disint-div-ub}-\ref{cc-lemma}-\ref{separation}-\ref{param-lemma}.

\begin{lemma}\label{lemma:splitting-eq-trans-on-cc} Fix a ball $B \in \mathscr B$ and the corresponding Hamiltonian $H_B$. Let $v \in L^\infty([0,T] \times B)$ be a solution to the problem 
\begin{equation}\label{eq:probl-cont-varrho}
\begin{cases}
v_{t} + \dive( v b) = 0, \\
v(0, \cdot) = v_{0}(\cdot),
\end{cases} \qquad \text{in } \ss D^{\prime}((0,T) \times B)
\end{equation}
Then $\hat{v}(t,s) := v(t,\gamma(s))$ solves 
\begin{equation*}
 \begin{cases}
 \partial_{t} \big( \hat{v} \hat{c}_{h} \vert \hat b  \vert \big) + \partial_{s} \big( \hat{v}\hat{c}_{h}\vert \hat{b} \vert \big) = 0, \\ 
\hat{v}(0,\cdot) = \hat{v}_{0}(\cdot ),
\end{cases}\qquad \text{in } \ss D^{\prime}((0,T) \times I).
\end{equation*}
for $\L^1$-a.e. $h$, where $\gamma \colon I \to \R^2$ is an admissible parametrization of a connected component of the level set $E_h$ of the Hamiltonian $H_B$. 
\end{lemma}

\begin{proof}
Multiplying by a function $\psi \in C_{c}^{\infty}([0,T))$ and formally integrating by parts we get
\begin{equation*}
 v_{t} \psi + \dive (v \psi b) = \psi \nu \, \Rightarrow \, \dive \left( \int_{0}^{T} v \psi \, dt\, b \right) =\int_{0}^{T} v \psi_{t}\, dt - \psi(0){v}_{0}, 
\end{equation*}
i.e. 
$$
\dive (wb) = \mu,
$$
where $w:=\int_{0}^{T}v \psi \, dt$ and 
\begin{equation*}
\mu:=\left( \int_{0}^{T} v \psi_{t}\, dt - \psi(0){v}_{0}\right)\L^{2}. 
\end{equation*}

% At this point, we are in position to apply Lemmas \ref{disint-div-ub}-\ref{cc-lemma}-\ref{separation}-\ref{param-lemma} with the Hamiltonian $H_{B,\sigma,\theta}$: we conclude that for $\L^1$-a.e. $h$ on every connected component $C$ of $E_h$ it holds 
% \begin{equation}
% \partial_s \left(\hat{v}\hat{c_h} \vert \hat{b} \vert \right) = \hat{\mu_h}, \qquad \text{in } \ss D^\prime (I)
% \end{equation}
% where $\gamma \colon I \to \R^2$ is an admissible parametrization of $C$ and, as usual, $\gamma_\# \hat{\mu}_{h} = \mu_{h} \rest C$, $\hat{u} = u \circ \gamma$, $\hat{c}_{h}=c_{h} \circ \gamma$ and $\hat{b}=b \circ \gamma$.

Applying Lemma \ref{disint-div-ub}, we obtain that continuity equation implies
\begin{equation}\label{eq:cont-eq-on-cc}
\dive \big( w c_{h} b \H^1 \rest E_h \big) = \mu_{h} \quad \text{ in } \ss D^{\prime}(\R^2) \text{ for } \L^1 \text{-a.e. } h \in \R.
\end{equation}

The measure $\mu_{h}$ can be computed explicitly, using Coarea Formula: 
\begin{equation*}
\mu_{h} = \bigg( \int_{0}^{T} v \psi_{t}\, dt -  \psi(0)v_{0} \bigg) \H^{1} \rest E_h. 
\end{equation*}
% Therefore, we get 
% \begin{equation*}
%  \dive \big( w c_{h}b \H^{1} \rest E_{h} \big) = \bigg( \int_{0}^{T} v \psi_{t}\,dt -  \psi(0)v_{0} \bigg) c_{h} \H^{1} \rest E_{h}.
% \end{equation*}
%  This means that for every $\phi \in C_{c}^{\infty}(\R^2)$, we have 
%  \begin{equation*}
%  \begin{split}
%  \int_{0}^{T} \bigg[ \int_{\R^2} v c_{h}\psi (b \cdot \nabla \phi) d\H^{1} \rest E_{h} \bigg] dt = & \int_{0}^{T} \bigg[ \int_{\R^2} v \psi_{t} \phi c_{h} d\H^{1} \rest E_{h} \bigg] dt  \\
% & -  \int_{\R^2}\psi(0)v_{0}\phi c_{h} \, d\H^{1} \rest E_{h} \\ 
% \end{split}
% \end{equation*}
% hence
% \begin{equation*}
% \begin{split}
%  \int_{0}^{T} \bigg[ \int_{\R^2} c_{h} (b \cdot \nabla(\psi \phi)) d\H^{1} \rest E_{h} \bigg] dt = &  \int_{0}^{T} \bigg[ \int_{\R^2}  u (\phi\psi)_{t} c_{h} \, d\H^{1} \rest E_{h} \bigg] dt \\
% & -  \int_{\R^2}(\phi\psi)(0) v_{0} c_{h} \, d\H^{1} \rest F_{h}.
% \end{split}
% \end{equation*}
% Since functions of the form $\psi(t)\phi(x)$ are dense in $C_{c}^{\infty}((0,T) \times \R^2)$, we deduce
% \begin{equation*}
% \begin{cases}
% \partial_{t} \big( v c_{h} \H^{1} \rest E_{h} \big) + \dive \big( v c_{h} b \H^{1} \rest E_{h} \big) = 0, \\
% v(0, \cdot) = v_{0}(\cdot). 
% \end{cases}
% \end{equation*}

Thanks to Lemma \ref{param-lemma}, equation \eqref{eq:cont-eq-on-cc} is \emph{equivalent} to
 \begin{equation*}
\partial_{s} \big( \hat{v}\hat{c}_{h}\vert \hat{b} \vert \big) = \hat{\mu}_{h},
\end{equation*}
 in $\ss D^{\prime}((0,T) \times I)$. Now being $\gamma_{h}$ Lipschitz and injective, we have
\begin{equation*}
(\gamma_{h}^{-1})_{\#} \big( \H^{1} \rest E_{h} \big) = \vert \gamma_{h}^{\prime} \vert \L^{1},
\end{equation*}
and this allows us to compute explicitly
\begin{equation*}
%\label{eq:misura-mu-h-sulla-curva}
\begin{split}
\hat{\mu}_{h} &= (\gamma_{h}^{-1})_{\#} \mu_{h} \\
 &= (\gamma_{h}^{-1})_{\#} \left( \int_{0}^{T} v \psi_{t}\, dt \, c_{h} \H^{1} \rest E_{h} - \int_{\R^2}\psi(0) v_{0} c_{h} d\H^{1} \rest F_{h} \right)\\
&= \int_{0}^{T} v(\tau, \gamma(s)) \psi_{\tau}(\tau) c_{h}(\gamma_{h}(s))\vert b(\gamma_{h}(s))\vert \, d\tau - \psi(0)v_{0}(\gamma_{h}(s))c_{h}(\gamma(s)),
 \end{split}
 \end{equation*}
which formally means
\begin{equation*}
 \hat{\mu}_{h} = - \int_0^T \partial_{t} \big( \hat{v} \vert \hat b \vert \hat{c}_{h} \big).
\end{equation*}
To sum up, we have obtained that Problem \eqref{eq:probl-transp} implies that
\begin{equation*}
 \begin{cases}
 \partial_{t} \big( \hat{v} \hat{c}_{h} \vert \hat b  \vert \big) + \partial_{s} \big( \hat{v}\hat{c}_{h}\vert \hat{b} \vert \big) = 0, \\ 
\hat{v}(0,\cdot) = \hat{v}_{0}(\cdot ), 
\end{cases}
\end{equation*}
in $\ss D^{\prime}((0,T) \times I)$ for $\L^1$-a.e. $h \in \R$. \end{proof}

\begin{lemma}\label{l-local-transport}
Fix $\sigma \in \Q \cap (0,T)$, $\theta \in \Q \cap (\sigma, T)$ and $B \in \ss B$. Let $H:=H_{B,\sigma,\theta}$.
Let $u \in L^\infty([0,T] \times \R^2)$ be a $\rho$-weak solution of the problem 
\begin{equation*}
\begin{cases}
u_{t} + b \cdot \nabla u = 0, \\
u(0, \cdot) = u_{0}(\cdot),
\end{cases} \qquad \text{in } \ss D^{\prime}((0,T) \times \R^2).
\end{equation*}
Then there exists a negligible set $Z=Z_{B,\sigma,\theta} \subset \R$ such that 
\begin{itemize}
\item 
for any $h \in Z$ the level set $E_h:=H^{-1}(h)$ is regular;
\item 
if $h\notin Z$ and $E_h$ is regular then
for any nontrivial connected component $C$ of $E_h$ with admissible parametrization $\gamma_C \colon I \to \R^2$,
any $t\in(0,T)$ and any $s\in I$
there exists a constant $w$ such that
\begin{equation}
u(t+\xi, \gamma_C(s+\xi)) = w
\end{equation}
for a.e. $\xi\in \R$ such that $s+\xi \in I$ and $t + \xi \in (0,T)$.
\newline
In particular, for any $s \in I$ it holds that
\begin{equation}
u(\xi, \gamma_C(s+\xi)) = u_0(s)
\end{equation}
for a.e. $\xi \in \R$ such that $s+\xi \in I$.
\end{itemize}
\end{lemma}

\begin{proof}
Setting $v:=u\rho \in L^\infty([0,T] \times \R^2)$ and $v_{0}(\cdot) = u_0(\cdot)\rho(0,\cdot)$, by definition of $\rho$-weak solution
we have
\begin{equation}
\begin{cases}
v_{t} + \dive( v b) = 0, \\
v(0, \cdot) = v_{0}(\cdot),
\end{cases} \qquad \text{in } \ss D^{\prime}((0,T) \times \R^2).
\end{equation}
Hence we can apply Lemma \ref{lemma:splitting-eq-trans-on-cc} in $B$ to get 
\begin{equation}\label{eq:prob-1-d}
 \begin{cases}
 \partial_{t} \big( \hat{v} \hat{c}_{h} \vert \hat b  \vert \big) + \partial_{s} \big( \hat{v}\hat{c}_{h}\vert \hat{b} \vert \big) = 0, \\ 
\hat{v}(0,\cdot) = \hat{v}_{0}(\cdot ),
\end{cases}\qquad \text{in } \ss D^{\prime}((0,T) \times I).
\end{equation}
for all $h \in H(B) \setminus N_1$, where $\L^1(N_1)=0$.

From \eqref{eq:prob-1-d} it immediately follows that the function
\begin{equation}\label{e-wrk1}
\xi \mapsto \Big( \hat{\rho} \hat{u} \hat{c}_h \vert \hat b \vert \Big)(t+\xi, s+\xi)
\end{equation}
is equal a.e. to some constant $w_1$.

Applying the same argument to the problem 
\begin{equation}\label{e-wrk2}
\begin{cases}
\rho_{t} + \dive( \rho b) = 0, \\
\rho(0, \cdot) = \rho_{0}(\cdot),
\end{cases} \qquad \text{in } \ss D^{\prime}((0,T) \times \R^2),
\end{equation}
(which holds thanks to nearly incompressibility assumption) we obtain
a negligible set $N_2$
such that 
for all $h \in H(B) \setminus N_2$, for any connected component of $E_h$
the map 
\begin{equation}\label{e-wrk1}
\xi \mapsto \Big( \hat{\rho} \hat{c}_h \vert \hat b \vert \Big)(t+\xi, s+\xi)
\end{equation}
is equal a.e. to some constant $w_2$.

Let $N:=N_1 \cup N_2$ and fix $h \notin N$.

Comparing \eqref{e-wrk1} and \eqref{e-wrk2}, using that $\rho c_h \vert b \vert>0$ $\H^1$-a.e. on $E_h$ (for a.e. $h$), we obtain that 
\begin{equation}\label{e-wrk1}
\xi \mapsto \Big( \hat{v} \hat{c}_h \vert \hat b \vert \Big)(t+\xi, s+\xi)
\end{equation}
is equal a.e. to the constant $w = w_1/w_2$ for a.e. $h \notin N$.
\end{proof}

\subsection{Selection of appropriate trajectories}
\begin{lemma}\label{l-covering-with-cc}
There exists an $\eta$-negligible set $N \subset \Gamma$ such that any integral curve $\gamma\in \tilde \Gamma \setminus N$
of the vector field $b$ has the following properties:
\begin{enumerate}
\item for any $B \in \ss B$, if $\gamma \in \mathsf T_{B,s,t}$ then each connected component of $\gamma([s,t])\cap B$ is contained in a regular level set of $H_B$;
 
      % \begin{itemize}
      % \item the Hamiltonian $H_B$ is constant along each connected component of $\gamma([0,T])\cap B$;
      % \item if $\gamma \in \mathsf T_{B,s,t}$ then each connected component of $\gamma([s,t])\cap B$ is contained in a regular level set of $H_B$;
      %\end{itemize}
\item for any $\tau \in (0,T)$ there exist a ball $B \in \ss B$, $s\in \Q \cap (0,T)$ and $t \in \Q \cap (\tau,T)$ such that $\gamma \in \mathsf T_{B,s,t}$.
%; \item there exists $T_\gamma \in (0, T]$ such that $[0,T_\gamma] \ni t \mapsto \gamma(t)$ is a simple (possibly closed) curve.
\end{enumerate}
\end{lemma}

\begin{proof}
First of all, using Lemma \ref{lemma:eta_M} we can remove a negligible set of integral curves of $b$ which stay in the set $\{b=0\}$ for a positive amount of time. % TODO: rigor
Applying Lemmas \ref{lemma:library} and \ref{lemma:library-plus} countably many times (for each ball $B \in \ss B$ and all rationals $s \in \Q \cap (0,T)$ and $t \in \Q \cap (s,T)$)
we obtain the set $N\subset \Gamma$ such that the first property holds.

Next, for any $\tau \in (0,T)$ there exists $s \in \Q \cap (0,\tau)$ such that $\gamma(s) \ne \gamma(\tau)$.
(Otherwise, since $\gamma$ is an integral curve of $b$, it would have to stay in $\{b=0\}$ for a positive amount of time).
Similarly there exists $t \in (s, T)$ such that $\gamma(t) \ne \gamma(\tau)$. Then for any ball $B \in \ss B$ with sufficiently small radius,
containing $\gamma(\tau)$ and not containing $\gamma(s)$ and $\gamma(t)$ it clearly holds that $\gamma \in T_{B,s,t}$.
\end{proof}

\begin{lemma} \label{lemma:negligible}
Let $Z_{B,s,t}$ denote negligible set given by Lemma~\ref{l-local-transport}.
Then for $\eta$-a.e. $\gamma \in \tilde \Gamma$ it holds that
\begin{equation}\label{e-agree-with-transport}
H_{B,s,t}(\gamma([0,T])) \cap Z_{B,s,t} = \emptyset.
\end{equation}
\end{lemma}

\begin{proof}
Set $A:=H_{B,s,t}^{-1}(Z_{B,s,t})$: by Coarea Formula, $\L^2(A) = 0$. Applying Lemma \ref{lemma:eta_lebesgue} we deduce that 
\begin{equation*}
\eta \Big(\left\{\gamma \in \Gamma: \L^{1}(\{ t \in [0,T]: \gamma(t) \in A\})>0\right\} \Big)= 0.
\end{equation*}
On the other hand $b \ne 0$ a.e. on $E_{B,s,t}$, hence
\begin{equation}
\begin{split}
& \Big\{ \gamma \in \tilde{\Gamma} \setminus N: \, \gamma([0,T]) \cap E_{B,s,t} \subset E_h, \, h \in Z \Big\} \\ 
& = \Big\{ \gamma \in \tilde{\Gamma} \setminus N: \, \gamma([0,T]) \cap E_{B,s,t} \subset A \Big\} \\
& \subset \left\{\gamma \in \Gamma: \L^{1}(\{ t \in [0,T]: \gamma(t) \in A\})>0\right\}.\qedhere
\end{split}\end{equation}\end{proof}

%\nik{WARNING: many small changes below, up to the end of this section}

From the Lemma \ref{lemma:negligible} it does not follow immediately that the endpoints $\gamma(0)$ and $\gamma(T)$ are
contained in regular level sets of some Hamiltonians. But now we are going to establish this property.
Being $Z_{B,s,t}$ given by Lemma~\ref{l-local-transport}, let $\tilde E_{B,s,t} := E_{B,s,t} \setminus H_{B,s,t}^{-1}(Z_{B,s,t})$
and
\begin{equation}
\tilde E := \bigcup_{\substack{B \in \B, \\ s,t \in \Q \cap (0,T): \; s<t}} \tilde E_{B,s,t}.
\end{equation}
Note that since $\tilde E_{B,s,t}= E_{B,s,t} \mod \L^2$ (by Coarea formula), it follows that $\tilde E = \hat E \mod \L^2$.

The following lemma shows that $\eta$-a.e. nontrivial trajectory of $b$ starts from the set $\tilde E$ (and also stops in $\tilde E$):
\begin{lemma}\label{l-eta-ae-gamma-starts-from-E}
For $\eta$-a.e. $\gamma \in \tilde \Gamma$ it holds that $\gamma(0)\in \tilde E$ and $\gamma(T) \in \tilde E$.
\end{lemma}
\begin{proof}
Consider the set $X$ of $\eta \in \tilde \Gamma$ such that $\gamma(0)\not\in \tilde E$.
By Lemma~\ref{l-E-is-nonzero-b} it holds that $b=0$ a.e. on the complement of $\tilde E$.
Hence by Lemma~\ref{lemma:eta_M} we have $\eta(X)=0$. The argument for $\gamma(T)$ is similar.
\end{proof}

In the lemmas above we have been removing $\eta$-negligible sets of trajectories of $b$. Let us summarize some properties of
the remaining ones:
\begin{lemma}\label{l-ae-gamma-in-E}
There exists a $\eta$-negligible set $R\subset \tilde \Gamma$ such that for any $\tau \in [0,T]$ and any $\gamma \in \tilde \Gamma \setminus R$ there exist $s \in \Q\cap (0,T)$, $t \in \Q \cap (s,T)$ and $B\in \ss B$ such that $\gamma(\tau) \in \tilde E_{B,s,t}$.
\end{lemma}
\begin{proof}
We define $R$ as the union of $\eta$-negligible sets given by Lemmas \ref{l-covering-with-cc}, \ref{lemma:negligible} and \ref{l-eta-ae-gamma-starts-from-E}.
If $\tau \in (0,T)$ the claim follows from Lemma \ref{l-covering-with-cc} since we can always find $s$ and $t$ such that $\tau \in (s,t)$
and the desired property holds. If $\tau = 0$ or $\tau = T$ then the result follows from Lemma~\ref{l-eta-ae-gamma-starts-from-E}.
\end{proof}

\begin{corollary}\label{c-local-conservation}
For any $\gamma \in \tilde \Gamma \setminus R$ and any $\tau \in [0,T]$ there exists $\delta > 0$ and a constant $w$
such that the function $\xi \mapsto u(\xi, \gamma(\xi))$ is equal to $w$ for a.e. $\xi \in (\tau-\delta, \tau+\delta) \cap [0,T]$.
Moreover, if $\tau = 0$ then the constant $w$ is equal to $u_0(\gamma(0))$.
\end{corollary}

\begin{proof}
The result follows directly from Lemma~\ref{l-ae-gamma-in-E}, Proposition~\ref{prop:translation-in-time} and Lemma~\ref{l-local-transport}.
\end{proof}

\subsection{Solutions are constant along $\eta$-a.e. trajectory}

Now we are in a position to recover the method of characteristics in our weak setting:

\begin{lemma}\label{l-characteristics-1} Suppose that $b$ is a bounded, autonomous, $\BV$ compactly supported, nearly incompressible (with density $\rho$) vector field on $\R^2$ and let $u \in L^\infty([0,T] \times \R^2)$ be a $\rho$-weak solution of the problem 
\begin{equation}\label{eq:probl-transp}
\begin{cases}
u_{t} + b \cdot \nabla u = 0, \\
u(0, \cdot) = u_{0}(\cdot),
\end{cases} \qquad \text{in } \ss D^{\prime}((0,T) \times \R^2).
\end{equation}
Then for $\eta$-a.e. $\gamma \in \Gamma$ for a.e. $t \in [0,T]$ it holds that
\begin{equation*}
u(t,\gamma(t)) = u_0(\gamma(0)).
\end{equation*}
\end{lemma}

\begin{proof}
It is clear that the thesis holds for any $\gamma \in \dot \Gamma$. Indeed, by Proposition~\ref{prop:locality}
\begin{equation*}
\partial_t (\rho u \1_M ) = 0
\end{equation*}
in $\ss D'$, where the set $M$ is defined in \eqref{eq:set_M}.

Hence it is sufficient to consider only the moving trajectories, i.e. $\gamma \in \tilde \Gamma$.
Let $R$ be the set given by Lemma \ref{l-ae-gamma-in-E}. Let $\gamma \in \tilde \Gamma \setminus R$.
By Corollary~\ref{c-local-conservation} for any $\tau \in [0,T]$ there exists $\delta>0$
such that the function $t \mapsto u(t,\gamma(t))$ is equal to some constant $w_\tau$ for a.e. $t \in (\tau - \delta, \tau + \delta) \cap[0,T]$.
Moreover, if $\tau = 0$ then $w_\tau = u_0(\gamma(0))$. It remains to extract a finite covering of $[0,T]$.
%
% Recall that, by Definition \ref{def:def-sol-near-inc}, a function $u \in L^\infty([0,T] \times \R^2)$ solves Problem \eqref{eq:probl-transp} if it solves 
% \begin{equation}\label{eq:contin-eq}
% \begin{cases}
% (\rho u)_{t} + \dive(\rho u b) = 0, \\
% u(0, \cdot) = u_{0}(\cdot),
% \end{cases} \qquad \text{in } \ss D^{\prime}((0,T) \times \R^2).
% \end{equation}
%
% By \eqref{eq:b-sigma-theta}, we have that there exist a ball $B \in \ss B$ and rationals $\sigma,\theta \in \Q$ such that $\gamma \in T_{B,\sigma,\theta}$. Actually, inside the ball $B$, the image of $\gamma$ coincide, by definition of $N$, with some connected component (call it $C_{B,\overline{h}}$) of a regular level set $E_{\overline{h}}$ of the Hamiltonian $H_{B,\sigma,\theta}$: 
% $$
% \gamma(0,T) \cap B = C_{B,\overline{h}}.
% $$
%
% On the other hand, for a fixed ball $B$, the set of curves $\gamma \in \tilde{\Gamma}\setminus N$ such that $\gamma(0,T) \subset E_{h}$ for $h \in Z_1$ is negligible by Lemma \ref{lemma:negligible}, so we can remove it. The conclusion now follows covering the compact $[0,T]$ with finitely many open intervals. In particular, we obtain a $\eta$-negligible set $N_1$ such that for all $\gamma \in \Gamma \setminus N_1$ the map 
% \begin{equation}\label{eq:1}
% [0,T] \ni t \mapsto (u\rho c_h \vert b \vert)(t, \gamma(t))
% \end{equation}
% is constant. 
%
 \end{proof} 

The following lemma is elementary, we prove it for sake of completeness. 

\begin{lemma} \label{l-characteristics-2} Let $u \in L^\infty([0,T] \times \R^2)$. 
If for $\eta$-a.e. $\gamma$ and a.e. $t\in[0,T]$ it holds that $u(t,\gamma(t)) = u_0(\gamma(t))$, then $u$
solves the transport equation with the initial condition $u_0$, i.e. 
\begin{equation*}
\begin{cases}
u_{t} + b \cdot \nabla u =0, \\
u(0, \cdot) = u_{0}(\cdot).
\end{cases}
\end{equation*}
\end{lemma}

\begin{proof}
Let $\varphi \in C_c^\infty([0,T) \times \R^2)$ be a smooth test function which vanishes at $T$. Then
\begin{equation*}
\begin{split}
& \int_{0}^T \int_{\R^2} (\rho u \varphi_t + \rho u b \nabla \varphi) \, dx \, dt + \int_{\R^2} \rho(0, x) u_0(x) \varphi(0, x) \, dx \\
& = \int_0^T \int_{\Gamma} u(t,\gamma(t)) \partial_t \varphi(t, \gamma(t)) \, d\eta(\gamma) \, dt + \int_{\Gamma} u_0(\gamma(0)) \varphi(0, \gamma(0)) \, d\eta(\gamma) \\
& = \int_0^T\int_{\Gamma} u_0(\gamma(0)) \partial_t \varphi(t, \gamma(t)) \, d\eta(\gamma) \, dt + \int_{\Gamma} u_0(\gamma(0)) \varphi(0, \gamma(0)) \, d\eta(\gamma) \\
&= - \int_{\Gamma} u_0(\gamma(0)) \varphi(0, \gamma(0)) \, d\eta(\gamma) + \int_{\Gamma} u_0(\gamma(0)) \varphi(0, \gamma(0)) \, d\eta(\gamma)
=0.\qedhere
\end{split}
\end{equation*}
\end{proof}

\section{Renormalization: proof of the Main Theorem}
\label{s:new-nine}

We are finally ready to state and prove the main result of the paper, which is the following 
\begin{theorem}
\label{T_main}
Every bounded, autonomous, compactly supported and nearly incompressible $\BV$ vector field on $\R^2$ has the renormalization property.
\end{theorem}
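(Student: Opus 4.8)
The plan is to deduce the renormalization property directly from Proposition~\ref{prop:renorm-1}, via a ``disintegrate, divide and transport'' argument on the curves $F_{\mathfrak a}$. Fix a $\rho$--weak solution $u\in L^\infty((0,T)\times\T^2)$ of the transport equation and fix $\beta\in C^1(\R)$; by Definition~\ref{def:ren-n-i} we must prove $\partial_t(\rho\beta(u))+\dive(\rho\beta(u)b)=0$ in $\ss D'((0,T)\times\T^2)$. Set $v:=\rho u$. By Definition~\ref{def:def-sol-near-inc}, $v$ solves the continuity equation $\partial_t v+\dive(vb)=0$, while $\rho$ solves it by Definition~\ref{def:ni}; both therefore admit weakly$^\star$ continuous representatives, so their traces $v(0,\cdot),\rho(0,\cdot)$ at $t=0$ are well defined, and since $\ln\rho\in L^\infty$ we have $\rho(0,\cdot)\ge 1/C$ a.e., so that $u_0:=v(0,\cdot)/\rho(0,\cdot)\in L^\infty(\T^2)$ makes sense. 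I would then apply Proposition~\ref{prop:renorm-1} (with $\nu=0$) to each of the solutions $v$ and $\rho$, and finally, after the analysis below, apply its converse direction to the function $\rho\beta(u)$.

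First I would read off the behaviour on $\{f=+\infty\}$. Proposition~\ref{prop:renorm-1} gives $\partial_t v\rest\{f=+\infty\}=0$ and $\partial_t\rho\rest\{f=+\infty\}=0$, i.e.\ $v\1_{\{f=+\infty\}}$ and $\rho\1_{\{f=+\infty\}}$ are independent of the time variable. Dividing (again using $\rho\ge1/C$), $u\1_{\{f=+\infty\}}$ is independent of $t$, hence so is $\beta(u)\1_{\{f=+\infty\}}$ and therefore $(\rho\beta(u))\1_{\{f=+\infty\}}$; that is, $\partial_t(\rho\beta(u))\rest\{f=+\infty\}=0$. Moreover, with $\nu=0$ the spatial measures $\mu$ produced in the proof of Proposition~\ref{prop:renorm-1} are absolutely continuous with respect to $\L^2$, so that the singular component $\sigma$ there vanishes; the same computation applied to $\rho\beta(u)$ again yields an absolutely continuous $\mu$, hence $\sigma=0$.

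The heart of the matter is the one--dimensional analysis on the curves. For $\xi$--a.e.\ $\mathfrak a\ne+\infty$ the conditional measure $c_{\mathfrak a}\H^1\rest F_{\mathfrak a}$ is finite, so $a:=\hat c_{\mathfrak a}|\hat b|=\hat c_{\mathfrak a}|\gamma_{\mathfrak a}'|\in L^1(I_{\mathfrak a})$ and $a>0$ a.e.; by Lemma~\ref{lemma:cc-of-ext} and Lemma~\ref{lemma:end-points-non-finite-time} we may assume $I_{\mathfrak a}=\R$ or $I_{\mathfrak a}=\R/(\ell_{\mathfrak a}\Z)$. Proposition~\ref{prop:renorm-1} gives that $R:=\hat\rho\,a$ and $V:=\hat v\,a=\hat\rho\hat u\,a$ both solve $\partial_t(\cdot)+\partial_s(\cdot)=0$ on $(0,T)\times I_{\mathfrak a}$, with initial data $R_0:=\hat\rho(0,\cdot)\,a$ and $V_0:=\hat\rho(0,\cdot)\hat u_0\,a=\hat u_0R_0$, where $\hat u_0=u_0\circ\gamma_{\mathfrak a}$. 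By the elementary uniqueness for $\partial_t w+\partial_s w=0$ with vanishing datum recalled above (together with linearity, and the fact that the domain is a line or a circle, so there are no finite endpoints), $R(t,s)=R_0(s-t)$ and $V(t,s)=V_0(s-t)$ a.e. Since $\hat\rho\ge1/C$ a.e.\ we have $R_0>0$ a.e., hence $R(t,s)>0$ a.e.\ and $\hat u(t,s)=V(t,s)/R(t,s)=\hat u_0(s-t)$ a.e. Consequently
\[
\widehat{\rho\beta(u)}\,a=\hat\rho\,\beta(\hat u)\,a=\beta(\hat u)\,R=\beta(\hat u_0(s-t))\,R_0(s-t),
\]
which is a function of $s-t$ lying in $L^1(I_{\mathfrak a})$; therefore $\widehat{\rho\beta(u)}\,\hat c_{\mathfrak a}|\hat b|$ solves $\partial_t(\cdot)+\partial_s(\cdot)=0$ on $(0,T)\times I_{\mathfrak a}$ with initial datum $\beta(\hat u_0)R_0=\widehat{\rho(0,\cdot)\beta(u_0)}\,\hat c_{\mathfrak a}|\hat b|$.

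Finally I would invoke the converse direction of Proposition~\ref{prop:renorm-1} for the function $\rho\beta(u)\in L^\infty((0,T)\times\T^2)$, with $\nu=0$ and initial datum $\rho(0,\cdot)\beta(u_0)$: the three required conditions --- vanishing of $\partial_t(\rho\beta(u))\rest\{f=+\infty\}$, $\sigma=0$, and the one--dimensional equation on $F_{\mathfrak a}$ for $\xi$--a.e.\ $\mathfrak a$ --- have all been checked above, intersecting only countably many $\xi$--null exceptional sets. Hence $\partial_t(\rho\beta(u))+\dive(\rho\beta(u)b)=0$ in $\ss D'((0,T)\times\T^2)$, i.e.\ $\beta(u)$ is a $\rho$--weak solution; as $u$ and $\beta$ were arbitrary, $b$ has the renormalization property. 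The step I expect to be the main obstacle is the measure--theoretic bookkeeping on the curves: one must verify that the pullbacks $\hat\rho,\hat v,\hat u$ are genuinely the conditional objects of the disintegration, so that $\widehat{\rho u}=\hat\rho\hat u$, $\widehat{\rho\beta(u)}=\hat\rho\beta(\hat u)$ and traces pull back to traces; that the bound $\rho\ge1/C$ persists after restriction to $\H^1\rest F_{\mathfrak a}$ for $\xi$--a.e.\ $\mathfrak a$; and that $a\in L^1(I_{\mathfrak a})$ \emph{globally}, not merely locally as in Lemma~\ref{lemma:disint_div_final_param}, so that the transported profiles extend to all of $I_{\mathfrak a}$ without boundary contributions --- which is precisely where Lemma~\ref{lemma:end-points-non-finite-time} enters.
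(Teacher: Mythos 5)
Your proposal is correct and follows essentially the same route as the paper: apply Proposition~\ref{prop:renorm-1} to both $\rho u$ and $\rho$, divide to get $\hat u(t,s)=\hat u_0(s-t)$ along $\xi$-a.e.\ curve and $u=u_0$ on $\{f=+\infty\}$, and then invoke the converse direction of Proposition~\ref{prop:renorm-1} for $\rho\beta(u)$. Your treatment is in fact slightly more careful than the paper's (explicit solution formulas $R(t,s)=R_0(s-t)$, the observation that $\nu=0$ forces $\sigma=0$, and the flagged bookkeeping about pullbacks and global integrability of $\hat c_{\mathfrak a}|\hat b|$), but the underlying argument is the same.
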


\begin{proof}
Let $u \in L^\infty([0,T] \times \R^2)$ be a solution of 
\begin{equation*}
\begin{cases}
u_{t} + b \cdot \nabla u = 0, \\
u(0, \cdot) = u_{0}(\cdot),
\end{cases} \qquad \text{in } \ss D^{\prime}((0,T) \times \R^2).
\end{equation*}

By Lemma~\ref{l-characteristics-1} the function $t \mapsto u(t,\gamma(t))$ is constant for $\eta$-a.e. $\gamma$. Then for any $\beta \in C^1(\R, \R)$ the function $t \mapsto \beta(u(t,\gamma(t)))$ is constant for $\eta$-a.e. $\gamma$. Hence by Lemma~\ref{l-characteristics-2} the function $\beta(u)$ is a solution of 
\begin{equation*}
\begin{cases}
(\beta(u))_{t} + b \cdot \nabla \beta(u)=0, \\
\beta(u) (0, \cdot) = \beta(u_{0})(\cdot).
\end{cases}
\end{equation*}
This concludes the proof.
\end{proof}

\bibliographystyle{plain}
\bibliography{biblio}

\end{document}